\renewcommand{\epsilon}{{\varepsilon}}
\numberwithin{equation}{section}
\newtheorem{theorem}{Theorem}[section]
\newtheorem{lemma}[theorem]{Lemma}
\newtheorem{definition}[theorem]{Definition}
\newtheorem{proposition}[theorem]{Proposition}
\newtheorem{corollary}[theorem]{Corollary}
\title[System NLS quadratic interaction]
{Existence, stability of standing waves and the characterization of finite time blow-up solutions for a system NLS with quadratic interaction}
\author[V. D. Dinh]{Van Duong Dinh}
\address[V. D. Dinh]{Institut de Math\'ematiques de Toulouse UMR5219, Universit\'e Toulouse CNRS, 31062 Toulouse Cedex 9, France 
and 
Department of Mathematics, HCMC University of Pedagogy, 280 An Duong Vuong, Ho Chi Minh, Vietnam}
\email{dinhvan.duong@math.univ-toulouse.fr}
\subjclass[2010]{35B35; 35B44; 35Q55}
\keywords{System NLS quadratic interaction, ground states, stability, blow-up}
\begin{document}
	
	\begin{abstract}
	We study the existence and stability of standing waves for a system of nonlinear Schr\"odinger equations with quadratic interaction in dimensions $d\leq 3$. We also study the characterization of finite time blow-up solutions with minimal mass to the system under mass resonance condition in dimension $d=4$. Finite time blow-up solutions with minimal mass are showed to be (up to symmetries) pseudo-conformal transformations of a ground state standing wave.
	\end{abstract}

	\maketitle

	\section{Introduction}
	\label{S:0}
	We study the system NLS equations
	\begin{equation} \label{Syst}
		\left\{ 
		\renewcommand*{\arraystretch}{1.5}
		\begin{array}{rcl}
			i\partial_t u + \frac{1}{2m} \Delta u & = & \lambda v \overline{u}, \\
			i\partial_t v + \frac{1}{2M} \Delta v & = & \mu u^2,
		\end{array}
		\right.
	\end{equation}
	where $u$ and $v$ are complex-valued functions of $(t,x) \in \mathbb{R} \times \mathbb{R}^d$, $\Delta$ is the Laplacian in $\mathbb{R}^d$, $m$ and $M$ are positive constants, $\lambda$ and $\mu$ are complex constants, and $\overline{u}$ is the complex conjugate of $u$. 
	
	The system \eqref{Syst} is related to the Raman amplification in a plasma (see e.g. \cite{CCO}). The system \eqref{Syst} is also regarded as a non-relativistic limit of the system of nonlinear Klein-Gordon equations
	\[
	\left\{ 
	\renewcommand*{\arraystretch}{1.5}
	\begin{array}{rcl}
	\frac{1}{2c^2m}\partial^2_t u - \frac{1}{2m} \Delta u + \frac{mc^2}{2} u& = & -\lambda v \overline{u}, \\
	\frac{1}{2c^2M}\partial^2_t v - \frac{1}{2M} \Delta v + \frac{Mc^2}{2} v& = & -\mu u^2,
	\end{array}
	\right.
	\]
	under the mass resonance condition
	\begin{align} \label{mas-res}
		M=2m.
	\end{align}
	Indeed, the modulated wave functions $(u_c,v_c):= (e^{itmc^2} u, e^{itMc^2} v)$ satisfy
	\begin{align}\label{klei-gord}
		\left\{ 
		\renewcommand*{\arraystretch}{1.5}
		\begin{array}{rcl}
			\frac{1}{2c^2m} \partial^2_t u_c - i\partial_t u_c - \frac{1}{2m} \Delta u_c &=& - e^{itc^2(2m-M)} \lambda v_c \overline{u}_c,\\
			\frac{1}{2c^2M} \partial^2_t v_c - i\partial_t v_c - \frac{1}{2M} \Delta v_c &=& - e^{itc^2(M-2m)} \mu u^2_c.
		\end{array}
		\right.
	\end{align}
	We see that the phase oscillations on the right hand sides vanish if and only if \eqref{mas-res} holds, and the system \eqref{klei-gord} formally yields \eqref{Syst} as the speed of light $c$ tends to infinity.
	
	By the standard scaling argument, the system \eqref{Syst} has the critical function space $H^{\frac{d}{2}-2}$, where $H^s$ is the usual Sobolev space of order $s$. In particular, the system \eqref{Syst} is $L^2$-critical for $d=4$, is $H^1$-critical for $d=6$ and is intercritical for $d=5$. 
	
	In \cite{HOT}, Hayashi-Ozawa-Takana studied the Cauchy problem for \eqref{Syst} in $L^2$, $H^1$ and in the weighted $L^2$ space $\langle x \rangle^{-1} L^2 = \mathcal{F}(H^1)$ under mass resonance condition, where $\langle x \rangle = \sqrt{1+ |x|^2}$ is the the Japanese bracket and $\mathcal{F}$ is the Fourier transform. They showed the existence of ground states for \eqref{Syst} by using the variational methods. They also pointed out explicit finite time blow-up solutions for \eqref{Syst} under the mass resonance condition in dimension $d=4$. Recently, Hamano in \cite{Hamano} showed the sharp threshold for scattering and blow-up for \eqref{Syst} under the mass resonance condition in dimension $d=5$. 
	
	Let us recall the local well-posedness in $H^1$ for \eqref{Syst} due to \cite{HOT}. The Cauchy problem \eqref{Syst} with data $(u(t_0), v(t_0)) = (u_0,v_0)$ given at $t=t_0$ is treated in the form of system of integral equations
	\begin{equation} \label{int-Syst}
		\left\{
		\renewcommand*{\arraystretch}{2}
		\begin{array}{rcl}
			u(t) &=& U_m(t-t_0) u_0 - i \mathlarger{\int}_{t_0}^t U_m(t-\tau) \lambda v(\tau) \overline{u}(\tau) d\tau, \\
			v(t) &=& U_M(t-t_0) v_0 - i \mathlarger{\int}_{t_0}^t U_M(t-\tau) \mu u^2(\tau) d\tau,
		\end{array}
		\right.
	\end{equation}
	where $U_m(t) = \exp \left(i\frac{t}{2m}\Delta\right)$ and $U_M(t) = \exp \left(i \frac{t}{2M}\Delta \right)$ are free propagators with masses $m$ and $M$ respectively. 
	
	To ensure the conservation law of total charge, it is natural to consider the following condition:
	\begin{align} \label{mas-con}
		\exists ~ c \in \mathbb{R} \backslash \{0\} \ : \ \lambda = c \overline{\mu}. 
	\end{align} 
	\begin{proposition}[LWP in $H^1$ \cite{HOT}] \label{prop-lwp-h1}
		Let $d\leq 6$ and let $\lambda$ and $\mu$ satisfy \eqref{mas-con}. Then for any $(u_0,v_0) \in H^1\times H^1$, there exists a unique paire of local solutions $(u,v) \in Y(I)\times Y(I)$ of \eqref{int-Syst}, where
		\begin{align*}
			Y(I) = (C\cap L^\infty)(I,H^1) \cap L^4(I,W^{1,\infty}) &\text{ for } d=1, \\
			Y(I) = (C\cap L^\infty)(I,H^1) \cap L^{q_0}(I,W^{1,{r_0}}) &\text{ for } d=2, 
		\end{align*}
		where $0<\frac{2}{q_0}=1-\frac{2}{r_0}<1$ with $r_0$ sufficiently large, 
		\begin{align*}
			Y(I) = (C\cap L^\infty)(I, H^1) \cap L^2(I, W^{1,\frac{2d}{d-2}}) &\text{ for } d\geq 3.
		\end{align*}
		Moreover, the solution satisfies the mass and energy conservation laws: for all $t\in I$,
		\begin{align*}
		M(u(t),v(t)) &:= \|u(t)\|^2_{L^2} + c \|v(t)\|^2_{L^2} = M(u_0,v_0), \\
		E(u(t),v(t)) &:= \frac{1}{2m}\|\nabla u(t)\|^2_{L^2} + \frac{c}{4M} \|\nabla v(t)\|^2_{L^2} + \emph{Re} (\lambda \langle v(t), u^2(t) \rangle ) = E(u_0,v_0),
		\end{align*}
		where $\langle \cdot, \cdot \rangle$ is the scalar product in $L^2$. 
	\end{proposition}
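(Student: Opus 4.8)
The plan is to solve the integral system \eqref{int-Syst} by a fixed point argument based on Strichartz estimates for the two Schr\"odinger propagators $U_m$ and $U_M$. Since $U_m(t)=e^{i\frac{t}{2m}\Delta}$ and $U_M(t)=e^{i\frac{t}{2M}\Delta}$ are rescalings of the free Schr\"odinger group, they enjoy the full range of homogeneous and inhomogeneous Strichartz estimates on any admissible pair $(q,r)$ with $\frac{2}{q}+\frac{d}{r}=\frac{d}{2}$. First I would fix, for each dimension, the auxiliary Strichartz exponent that defines $Y(I)$: the pair $(4,\infty)$ for $d=1$, a non-endpoint pair $(q_0,r_0)$ with $r_0$ large for $d=2$, and the endpoint pair $\big(2,\frac{2d}{d-2}\big)$ for $d\geq3$. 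All of these are $L^2$-admissible, and the associated inhomogeneous estimate controls the Duhamel terms of \eqref{int-Syst} in the full $L^\infty_tH^1\cap L^{q}_tW^{1,r}$ norm of $Y(I)$.

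The heart of the matter is the nonlinear estimate for the quadratic interactions $v\overline{u}$ and $u^2$ and their gradients in the dual Strichartz space. Writing $\nabla(v\overline{u})=(\nabla v)\overline{u}+v\,\overline{\nabla u}$ and $\nabla(u^2)=2u\nabla u$, I would distribute the two factors by H\"older's inequality so that one factor carries a derivative in the Strichartz norm defining $Y(I)$ while the remaining factor is placed in $L^\infty_tL^{r}$ and controlled by the Sobolev embedding $H^1\hookrightarrow L^{r}$. Because the scaling-critical space is $H^{\frac d2-2}$, for $d\leq 5$ the problem is $H^1$-subcritical, so a positive power of $|I|$ appears in front of the nonlinear terms; the map $\Phi$ given by the right-hand sides of \eqref{int-Syst} then contracts on a closed ball of $Y(I)\times Y(I)$ once $|I|$ is taken small depending only on $\|(u_0,v_0)\|_{H^1\times H^1}$.

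The main obstacle is the energy-critical dimension $d=6$, where no power of $|I|$ is gained and a short time interval alone does not close the estimate. There I would instead exploit the smallness of $\|U_m(\cdot)u_0\|_{L^2(I,W^{1,\frac{2d}{d-2}})}+\|U_M(\cdot)v_0\|_{L^2(I,W^{1,\frac{2d}{d-2}})}$ on a sufficiently small interval, which tends to $0$ as $|I|\to0$ since the full-line norm is finite by the endpoint estimate; running the contraction in the homogeneous Strichartz norm and upgrading to $Y(I)$ afterwards. The same scheme yields uniqueness, as two solutions with identical data satisfy a difference estimate forcing them to coincide on the common interval, and the blow-up alternative follows by iterating the local construction over the maximal interval.

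Finally, for the conservation laws I would first establish them for smooth, decaying solutions and then pass to the $H^1$ setting by a standard approximation argument. Mass conservation is obtained by pairing the first equation with $\overline u$ and the second with $\overline v$, taking imaginary parts so the Laplacian terms drop, and combining the two identities with weight $c$: the two nonlinear contributions cancel because \eqref{mas-con} gives $c\,\mathrm{Im}(\mu\langle u^2,v\rangle)=-\mathrm{Im}(\lambda\langle v,u^2\rangle)$, whence $\frac{d}{dt}\big(\|u\|_{L^2}^2+c\|v\|_{L^2}^2\big)=0$. Energy conservation follows from the Hamiltonian structure of \eqref{Syst}: pairing the equations with $\partial_t\overline u$ and $\partial_t\overline v$ with the weights $\tfrac1m$ and $\tfrac{c}{2M}$ dictated by $E$, taking real parts, and again invoking \eqref{mas-con} to recombine the nonlinear contributions into $\frac{d}{dt}\,\mathrm{Re}(\lambda\langle v,u^2\rangle)$. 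The only delicate point is that these manipulations are merely formal at $H^1$ regularity, so I would justify them by regularizing the data, using the continuous dependence already obtained, and letting the regularization parameter tend to zero.
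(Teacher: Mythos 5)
Your sketch is the standard Strichartz/contraction-mapping argument (with the small-linear-evolution refinement at the energy-critical dimension $d=6$ and regularization for the conservation laws), which is exactly how this result is established in the cited reference \cite{HOT}; the paper itself states Proposition \ref{prop-lwp-h1} without proof. The outline is correct, including the key cancellation $\mathrm{Im}(\lambda\langle v,u^2\rangle)=-c\,\mathrm{Im}(\mu\langle u^2,v\rangle)$ coming from \eqref{mas-con} that yields conservation of $\|u\|_{L^2}^2+c\|v\|_{L^2}^2$.
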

	Note that for $d\leq 5$, the existence time depends only on $\|u_0\|_{H^1} + \|v_0\|_{H^1}$. However, for $d=6$, the existence time depends not only on $H^1$-norm of initial data but also on the profile of the initial data.
	
	From now on, we assume that $\lambda$ and $\mu$ satisfy \eqref{mas-con} with $c>0$ and $\lambda \ne 0, \mu \ne 0$. By change of variables
	\[
	u(t,x) \mapsto \sqrt{\frac{c}{2}} |\mu| u \left(t,\sqrt{\frac{1}{2m}} x \right), \quad v(t,x) \mapsto -\frac{\lambda}{2} v\left( t, \sqrt{\frac{1}{2m}} x\right),
	\]
	the system \eqref{Syst} becomes 
	\begin{equation} \label{wor-Syst}
		\left\{ 
		\renewcommand*{\arraystretch}{1.3}
		\begin{array}{rcl}
			i\partial_t u + \Delta u & = & -2 v \overline{u}, \\
			i\partial_t v + \kappa \Delta v & = & - u^2,
		\end{array}
		\right.
	\end{equation}
	where $\kappa=\frac{m}{M}$ is the mass ratio. In the sequel, we only consider the system \eqref{wor-Syst} with initial data $(u(0), v(0)) = (u_0,v_0)$. Note that the mass now becomes
	\[
	M(u(t),v(t)) = \|u(t)\|^2_{L^2} + 2 \|v(t)\|^2_{L^2},
	\]
	and the energy is
	\[
	E(u(t),v(t)) = \frac{1}{2} (\|\nabla u(t)\|^2_{L^2} + \kappa \|\nabla v(t)\|^2_{L^2} ) - \text{Re}( \langle v(t), u^2(t)\rangle).
	\]
	The local well-posedness in $H^1$ for \eqref{wor-Syst} reads as follows.
	\begin{proposition} [LWP in $H^1$] \label{prop-lwp-wor}
		Let $d\leq 6$. Then for any $(u_0, v_0) \in H^1 \times H^1$, there exists a unique pair of local solutions $(u,v) \in Y(I) \times Y(I)$ of \eqref{wor-Syst}. Moreover, the solution satisfies the conservation of mass and energy: for all $t \in I$,
		\begin{align*}
		M(u(t),v(t)) &:= \|u(t)\|^2_{L^2} + 2 \|v(t)\|^2_{L^2} = M(u_0,v_0), \\
		E(u(t),v(t)) &:= \frac{1}{2} (\|\nabla u(t)\|^2_{L^2} + \kappa \|\nabla v(t)\|^2_{L^2}) - \emph{Re} (\langle v(t),u^2(t)\rangle) = E(u_0,v_0).
		\end{align*}
	\end{proposition}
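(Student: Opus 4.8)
The plan is to deduce Proposition \ref{prop-lwp-wor} directly from Proposition \ref{prop-lwp-h1} by exploiting the explicit change of variables introduced just before the statement, rather than redoing a fixed-point argument from scratch. The map
\[
(u,v) \longmapsto \left( \sqrt{\tfrac{c}{2}}\,|\mu|\, u\!\left(t,\sqrt{\tfrac{1}{2m}}\,x\right), \ -\tfrac{\lambda}{2}\, v\!\left(t,\sqrt{\tfrac{1}{2m}}\,x\right) \right)
\]
is, for $\lambda\ne 0$, $\mu\ne 0$ and $c>0$, a linear bijection: it is the composition of a fixed spatial dilation $x\mapsto \sqrt{1/(2m)}\,x$ with multiplication by nonzero constants, hence an isomorphism of $H^1\times H^1$ onto itself and, since it leaves the time variable untouched while a dilation acts boundedly and invertibly on the spatial Lebesgue--Sobolev norms, also an isomorphism of $Y(I)\times Y(I)$ onto itself. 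Consequently the existence, uniqueness and continuous-dependence statements of Proposition \ref{prop-lwp-h1} transfer verbatim to \eqref{wor-Syst}: given data for \eqref{wor-Syst} one applies the inverse map to obtain data for \eqref{Syst}, solves by Proposition \ref{prop-lwp-h1}, and pushes the resulting solution forward; uniqueness follows because two solutions of \eqref{wor-Syst} with the same data would pull back to two solutions of \eqref{Syst} with the same data.

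First I would check that the change of variables genuinely intertwines \eqref{Syst} and \eqref{wor-Syst}. Writing $y=\sqrt{1/(2m)}\,x$ so that $\Delta_x = \tfrac{1}{2m}\Delta_y$, a direct substitution turns $i\partial_t u + \tfrac{1}{2m}\Delta u = \lambda v\overline u$ into $i\partial_t U + \Delta_x U = -2V\overline U$ provided the constants satisfy $a\lambda = -2b\,\overline a$ with $a=\sqrt{c/2}\,|\mu|$ and $b=-\lambda/2$, which holds since $a$ is real and thus $-2b\,\overline a=\lambda a$. Similarly $i\partial_t v + \tfrac{1}{2M}\Delta v = \mu u^2$ becomes $i\partial_t V + \kappa\Delta_x V = -U^2$ (recall $\kappa=m/M$) provided $b\mu = -a^2$; using the charge-conservation hypothesis \eqref{mas-con}, namely $\lambda = c\overline\mu$, this reduces to $-\tfrac{c}{2}|\mu|^2 = -a^2$, which is exactly the definition of $a$. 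This is the only place where \eqref{mas-con} is used, and it is precisely what makes the quadratic nonlinearities line up.

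It then remains to verify that the conserved quantities of Proposition \ref{prop-lwp-h1} become the asserted mass and energy for \eqref{wor-Syst}. Under the dilation each $L^2$ and $\dot H^1$ norm picks up a fixed power of $\sqrt{2m}$ (the Jacobian factor and, for the gradient, the extra chain-rule factor), and each term in the coupling $\langle v,u^2\rangle$ picks up the constants $a,b$; collecting these factors, both $M(u,v)=\|u\|_{L^2}^2+c\|v\|_{L^2}^2$ and $E(u,v)$ are seen to equal a fixed positive multiple of the new $M(U,V)=\|U\|_{L^2}^2+2\|V\|_{L^2}^2$ and $E(U,V)=\tfrac12\big(\|\nabla U\|_{L^2}^2+\kappa\|\nabla V\|_{L^2}^2\big)-\mathrm{Re}\,\langle V,U^2\rangle$, so conservation of the former is equivalent to conservation of the latter. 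I expect no genuine obstacle: the whole argument is bookkeeping of explicit constants, and the only substantive point is the coefficient identity forced by \eqref{mas-con}. Alternatively, if a self-contained proof is preferred, the same Strichartz-estimate contraction that underlies Proposition \ref{prop-lwp-h1} applies unchanged to \eqref{wor-Syst}, since the two systems share the structure of two free Schr\"odinger propagators coupled through quadratic terms.
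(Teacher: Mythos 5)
Your proposal is correct and follows essentially the same route as the paper, which presents Proposition \ref{prop-lwp-wor} as an immediate consequence of Proposition \ref{prop-lwp-h1} via the change of variables stated just before it (the paper gives no further argument). Your verification of the coefficient identities forced by \eqref{mas-con}, of the invariance of $Y(I)\times Y(I)$ under the dilation-plus-scalar map, and of the fact that the old and new mass and energy agree up to a fixed positive multiple, simply fills in the bookkeeping the paper leaves implicit.
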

	In the first part of the sequel, we study the existence and stability of ground state standing wave solutions for \eqref{wor-Syst}. A standing wave for \eqref{wor-Syst} is a solution of the form $(u(t),v(t))$ with $u(t,x) = e^{i\omega_1 t} \phi(x)$ and $v(t,x) = e^{i\omega_2 t} \psi(x)$, where $\omega_1$ and $\omega_2$ are real numbers and $\phi, \psi: \mathbb{R}^d \rightarrow \mathbb{C}$ satisfy the coupled system of elliptic equations
	\begin{align} \label{sys-ell-equ}
	\left\{ 
	\renewcommand*{\arraystretch}{1.3}
	\begin{array}{rcl}
	-\Delta \phi + \omega_1 \phi & = & 2 \psi \overline{\phi}, \\
	-\kappa \Delta \psi + \omega_2 \psi & = & \phi^2,
	\end{array}
	\right.
	\end{align}
	In \cite{HOT}, Hayashi-Ozawa-Tanaka proved the existence of real-valued ground states for \eqref{sys-ell-equ} with $\omega_2=2\omega_1=2\omega>0$ and $ d\leq 5$. We recall that a pair of complex-valued functions $(\phi,\psi) \in H^1 \times H^1$ is called a ground state for \eqref{sys-ell-equ} with $\omega_2=2\omega_1=2\omega$ if it minimizes the associated functional
	\begin{align*}
	S_\omega(u,v) :&= E(u,v) +\frac{\omega}{2} M(u,v) \\
	&=\frac{1}{2} (\|\nabla u\|^2_{L^2} + \kappa \|\nabla v\|^2_{L^2}) + \frac{\omega}{2} (\|u\|^2_{L^2} + 2 \|v\|^2_{L^2}) - \text{Re}(\langle v,u^2\rangle)
	\end{align*}
	among all non-zero solutions of \eqref{sys-ell-equ}. The proof is based on the variational argument using Strauss's compactness embedding $H^1_{\text{rad}}(\mathbb{R}^d) \subset L^3(\mathbb{R}^d)$ for $2 \leq d \leq 5$. The case $d=1$, they employed a concentration-compactness argument using Palais-Smale sequence. In this paper, our approach is different and is based on the concentration-compactness method of Lions \cite{Lions1}. Given any $a,b>0$, we look for solutions $(\phi,\psi) \in H^1 \times H^1$ of \eqref{sys-ell-equ} satisfying $\|\phi\|^2_{L^2} = a$ and $\|\psi\|^2_{L^2}=b$. Such solutions are of interest in physics (often referred to as normalized solutions). To this end, we consider, for $d\leq 3$ and $a,b>0$, the following variational problem
	\begin{align} \label{prob-1}
	I(a,b):= \inf \{ E(u,v) \ : \ (u,v) \in H^1 \times H^1, \|u\|^2_{L^2} =a, \|v\|^2=b \}.
	\end{align}
	We denote the set of nontrivial minimizers for $I(a,b)$ by 
	\[
	\mathcal{G}_{a,b} = \{ (u,v) \in H^1 \times H^1 \ : \ E(u,v) = I(a,b), \|u\|^2_{L^2} =a, \|v\|^2_{L^2}=b \}.
	\]
	Our first result is the existence of minimizers for \eqref{prob-1}. 
	\begin{theorem} \label{theo-exi-min}
		Let $d\leq 3$ and $a,b>0$. Then the following properties hold:
		\begin{itemize}
			\item[(1)] The set $\mathcal{G}_{a,b}$ is not empty. Any minimizing sequence $(u_n,v_n)_{n\geq 1}$ for $I(a,b)$ is relatively compact in $H^1 \times H^1$ up to translations. That is, there exist $(y_n)_{n\geq 1} \subset \mathbb{R}^d$ and $(u,v) \in H^1\times H^1$ such that $(u_n(\cdot+y_n), v_n(\cdot+y_n))_{n\geq 1}$ has subsequence converging strongly to $(u,v)$ in $H^1 \times H^1$. Moreover, $(u,v) \in \mathcal{G}_{a,b}$.
			\item[(2)] 
			\begin{align}\label{lit-gro-1}
			\inf_{(w,z) \in G_{a,b}, y \in \mathbb{R}^d} \|(u_n(\cdot+y), v_n(\cdot+y)) - (w,z)\|_{H^1\times H^1} \rightarrow 0 \text{ as } n\rightarrow \infty.
			\end{align}
			\item[(3)] 
			\begin{align} \label{lit-gro-2}
			\inf_{(w,z) \in \mathcal{G}_{a,b}} \|(u_n,v_n) - (w,z)\|_{H^1\times H^1} \rightarrow 0 \text{ as } n\rightarrow \infty.
			\end{align}
			\item[(4)] Each $(u,v) \in \mathcal{G}_{a,b}$ is a classical solution of \eqref{sys-ell-equ} for some $\omega_1, \omega_2 \in \mathbb{R}$. Moreover, there exist $\theta_1, \theta_2 \in \mathbb{R}$ and nonnegative functions $\vartheta, \zeta$ such that $u(x) = e^{i \theta_1} \vartheta(x)$ and $v(x) = e^{i\theta_2} \zeta(x)$ for all $x \in \mathbb{R}^d$.
		\end{itemize}
	\end{theorem}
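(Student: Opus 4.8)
The plan is to run Lions' concentration--compactness method on the constrained minimization \eqref{prob-1}, extract the Euler--Lagrange equations by Lagrange multipliers, and finally pin down the phase structure by a diamagnetic comparison. First I would show that $I(a,b)$ is finite and that every minimizing sequence is bounded in $H^1\times H^1$. The only dangerous term in $E$ is the cubic coupling, which H\"older bounds by $|\mathrm{Re}\langle v,u^2\rangle|\le \|v\|_{L^3}\|u\|_{L^3}^2$; Gagliardo--Nirenberg with exponent $\theta=d/6$ then controls this by $C\|\nabla u\|_{L^2}^{d/3}\|\nabla v\|_{L^2}^{d/6}$ times $L^2$-norms. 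Since $d\le 3$ forces the total gradient power $d/6+d/3=d/2<2$, Young's inequality absorbs the coupling into the quadratic kinetic energy, giving $I(a,b)>-\infty$ and uniform $H^1$ bounds. The same scaling heuristic applied to a fixed profile $(u_\sigma,v_\sigma)=(\sigma^{d/2}u(\sigma\cdot),\sigma^{d/2}v(\sigma\cdot))$, for which $E=\tfrac{\sigma^2}{2}K-\sigma^{d/2}P$, shows that for small $\sigma$ one can render $E<0$, hence $I(a,b)<0$ for all $a,b>0$.

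Next I would establish the strict subadditivity needed to exclude vanishing and dichotomy. Vanishing is immediate: were it to occur, Lions' lemma forces $u_n\to 0$ in $L^3$, so the coupling term vanishes and $\liminf E(u_n,v_n)\ge 0>I(a,b)$, a contradiction. For dichotomy I would first prove the scaling relation $I(\theta a,\theta b)<\theta I(a,b)$ for every $\theta>1$: taking a near-minimizer $(u,v)$ and using the amplitude scaling $(\sqrt\theta\,u,\sqrt\theta\,v)$ gives $E(\sqrt\theta\,u,\sqrt\theta\,v)=\tfrac{\theta}{2}K-\theta^{3/2}P$, and since $E(u,v)<0$ forces $P\ge |I(a,b)|-\varepsilon>0$ while $\theta^{3/2}>\theta$, this value lies strictly below $\theta E(u,v)$. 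From this ray-scaling inequality the strict subadditivity $I(a,b)<I(a_1,b_1)+I(a_2,b_2)$ for all nontrivial splits $a=a_1+a_2$, $b=b_1+b_2$ follows (degenerate splits with a zero component contribute $I=0$ and are handled separately using $I(a,b)<0$), so dichotomy is ruled out. \emph{I expect this strict subadditivity, especially for non-proportional splits, to be the main obstacle.}

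With vanishing and dichotomy excluded, Lions' lemma produces translations $y_n$ along which $|u_n(\cdot+y_n)|^2+|v_n(\cdot+y_n)|^2$ is tight. Boundedness then yields a weak $H^1\times H^1$ limit $(u,v)$; tightness together with the local Rellich embeddings $H^1\hookrightarrow L^2_{\mathrm{loc}}, L^3_{\mathrm{loc}}$ promotes this to $\|u\|_{L^2}^2=a$, $\|v\|_{L^2}^2=b$ and to convergence of the coupling term. Weak lower semicontinuity of the kinetic energy then gives $E(u,v)\le I(a,b)$, so $(u,v)\in\mathcal G_{a,b}$; matching of the kinetic energies forces $\|\nabla u_n\|_{L^2}\to\|\nabla u\|_{L^2}$ and $\|\nabla v_n\|_{L^2}\to\|\nabla v\|_{L^2}$, upgrading weak to strong $H^1$ convergence and proving (1). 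Items (2) and (3) follow from (1) by the standard contradiction argument, using that $\mathcal G_{a,b}$ is translation invariant, so the inner translation in \eqref{lit-gro-1} is redundant and \eqref{lit-gro-1}, \eqref{lit-gro-2} coincide.

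Finally, for (4) I would invoke the Lagrange multiplier theorem: at the nonzero minimizer the constraints $\|u\|_{L^2}^2=a$ and $\|v\|_{L^2}^2=b$ have independent differentials, so there exist real $\omega_1,\omega_2$ for which $(u,v)$ solves \eqref{sys-ell-equ} weakly, and an elliptic bootstrap on the smooth quadratic nonlinearity gives a classical solution. For the phase structure I would compare $(u,v)$ with $(|u|,|v|)$: the diamagnetic inequality gives $\|\nabla|u|\|_{L^2}\le\|\nabla u\|_{L^2}$ and likewise for $v$, while $\mathrm{Re}\langle v,u^2\rangle\le\int|v|\,|u|^2=\mathrm{Re}\langle |v|,|u|^2\rangle$, whence $E(|u|,|v|)\le E(u,v)$. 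Minimality forces equality throughout; the equality case of the diamagnetic inequality makes $|\nabla\varphi|=0$ a.e.\ on the positivity set, which is all of $\mathbb R^d$ by the strong maximum principle, so the phases are global constants and $u=e^{i\theta_1}\vartheta$, $v=e^{i\theta_2}\zeta$ with $\vartheta,\zeta\ge 0$.
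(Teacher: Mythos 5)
Your overall architecture (coercivity via Gagliardo--Nirenberg, negativity of $I(a,b)$ by dilation, exclusion of vanishing, concentration--compactness, Lagrange multipliers, and the diamagnetic comparison for the phase) coincides with the paper's, and those parts are sound. The genuine gap is exactly where you flagged an obstacle and then asserted it was overcome: the passage from the ray-scaling inequality $I(\theta a,\theta b)<\theta I(a,b)$ ($\theta>1$) to the strict subadditivity $I(a_1+a_2,b_1+b_2)<I(a_1,b_1)+I(a_2,b_2)$ for \emph{arbitrary} nontrivial splits. Amplitude scaling $(u,v)\mapsto(\sqrt{\theta}u,\sqrt{\theta}v)$ only compares values of $I$ along a single ray through the origin, and dichotomy produces a split $(a_1,b_1)+(a-a_1,b-b_1)$ with no reason for $(a_1,b_1)$ and $(a-a_1,b-b_1)$ to be proportional. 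Strict sub-homogeneity along rays does not imply strict subadditivity for a two-parameter infimum: for instance the abstract function $g(a,b)=\max(a,b)^{3/2}$ satisfies $g(\theta a,\theta b)>\theta g(a,b)$ for $\theta>1$ yet $g(1+\epsilon,1+\epsilon)<g(1,\epsilon)+g(\epsilon,1)$ for small $\epsilon>0$, so superadditivity of $-I$ cannot be deduced from the ray inequality alone. Likewise, your treatment of the degenerate splits is insufficient: when, say, $b_2=0$ but $a_1,b_1,a_2>0$, one has $I(a_2,0)=0$, so what is actually required is the strict monotonicity $I(a_1+a_2,b_1)<I(a_1,b_1)$, which again does not follow from scaling or from $I(a,b)<0$.

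This is precisely the content of Lemmas \ref{lem-ine-2} and \ref{lem-ine-3} in the paper, whose proofs use a different mechanism: one takes minimizing sequences for the two pieces, compares the per-unit-mass marginal quantities $A_i=\tfrac{1}{a_i}\lim_n\bigl(\tfrac12\|\nabla u^i_n\|_{L^2}^2-\int v^i_n(u^i_n)^2\,dx\bigr)$, and transfers the $L^2$-mass of the $u$-component onto whichever profile has the smaller marginal cost (with a separate argument, based on the strict positivity of the cross term $\int v^1_n(u^2_n)^2\,dx$, in the tie case $A_1=A_2$). Some such mass-transfer or rearrangement argument is needed; without it the dichotomy case is not excluded and Item (1) is not established. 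The remainder of your proposal (Items (2)--(4)) would go through once this is repaired.
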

	Note that we will use in this paper the following convention: a minimizing sequence for $I(a,b)$ is defined as a sequence $(u_n,v_n)_{n\geq 1} \subset H^1 \times H^1$ such that $\|u_n\|^2_{L^2} \rightarrow a$, $\|v_n\|^2_{L^2} \rightarrow b$ and $E(u_n, v_n) \rightarrow I(a,b)$ as $n\rightarrow \infty$.
	
	The proof of Theorem $\ref{theo-exi-min}$ is based on the concentration-compactness method of Lions \cite{Lions1}. Similar arguments have been used in \cite{Bhattarai-frac} (see also \cite{AB, Ardila, Bhattarai-syst}) to show the existence and stability of standing waves.
	
	At the moment, we do not know the set $\mathcal{G}_{a,b}$ are orbital stable under the flow of \eqref{wor-Syst} or not. The main issue is that the quantities $\|u(t)\|^2_{L^2}$ and $\|v(t)\|^2_{L^2}$ are not conserved under the flow of \eqref{wor-Syst}. We only have the conservation of mass $M(u(t),v(t)) = \|u(t)\|^2_{L^2} + 2\|v(t)\|^2_{L^2} = M(u_0,v_0)$ and $E(u(t), v(t))=E(u_0,v_0)$. The orbital stability of standing waves for \eqref{wor-Syst} is thus related to the following variational problem
	\begin{align} \label{prob-2}
	J(c):= \inf \{ E(u,v) \ : \ (u,v) \in H^1 \times H^1, M(u,v) =c \}.
	\end{align}
	The set of nontrivial minimizers for $J(c)$ is denoted by
	\[
	\mathcal{M}_c = \inf\{ (u,v) \in H^1 \times H^1 \ : \ E(u,v) = J(c), M(u,v)=c \}.
	\]
	Our next result is the existence of minimizers for $J(c)$.
	\begin{theorem} \label{theo-exi-min-J}
		Let $d\leq 3$ and $c>0$. Then the following properties hold:
		\begin{itemize}
			\item[(1)] The set $\mathcal{M}_c$ is not empty. Any minimizing sequence $(u_n,v_n)_{n\geq 1}$ for $J(c)$ is relatively compact in $H^1\times H^1$ up to translations. That is, there exist $(y_n)_{n\geq 1} \subset \mathbb{R}^d$ and $(u,v)\in H^1 \times H^1$ such that $(u_n(\cdot+y_n),v_n(\cdot+y_n))_{n\geq 1}$ has subsequence converging strongly to $(u,v)$ in $H^1\times H^1$. Moreover, $(u,v) \in \mathcal{M}_c$.
			\item[(2)]
			\begin{align} \label{lit-gro-J-1}
			\inf_{(w,z) \in \mathcal{M}_c, y \in \mathbb{R}^d} \|(u_n(\cdot+y),v_n(\cdot+y)) -(w,z)\|_{H^1\times H^1} \rightarrow 0 \text{ as } n\rightarrow \infty.
			\end{align}
			\item[(3)] 
			\begin{align} \label{lit-gro-J-2}
			\inf_{(w,z) \in \mathcal{M}_c} \|(u_n,v_n) -(w,z)\|_{H^1\times H^1} \rightarrow 0 \text{ as } n\rightarrow \infty.
			\end{align}
			\item[(4)] Each $(u,v) \in \mathcal{M}_c$ is a classical solution of \eqref{sys-ell-equ} for some $\omega_2=2\omega_1=2\omega >0$. Moreover, there exist $\theta_1, \theta_2 \in \mathbb{R}$ and nonnegative functions $\vartheta,\zeta$ such that $u(x) = e^{i\theta_1} \vartheta(x)$ and $v(x)=e^{i\theta_2} \zeta(x)$ for all $x \in \mathbb{R}^d$.
			\item[(5)] Each $(u,v)\in \mathcal{M}_c$ is a ground state for \eqref{sys-ell-equ} with $\omega_2=2\omega_1=2\omega>0$.
		\end{itemize}
	\end{theorem}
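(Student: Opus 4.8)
The plan is to prove Theorem~\ref{theo-exi-min-J} by running the concentration--compactness method of Lions directly on the single--constraint functional $J(c)$, in complete parallel with the proof of Theorem~\ref{theo-exi-min}, and then to extract the sharper conclusions (the elliptic system, the sign of the frequency and the ground state property) from the algebraic form $M(u,v)=\|u\|_{L^2}^2+2\|v\|_{L^2}^2$ of the constrained quantity. Since $d\le 3$ the system is $L^2$--subcritical, so H\"older's inequality $|\langle v,u^2\rangle|\le\|v\|_{L^3}\|u\|_{L^3}^2$ together with the Gagliardo--Nirenberg inequality lets me absorb the interaction term into an arbitrarily small multiple of $\|\nabla u\|_{L^2}^2+\|\nabla v\|_{L^2}^2$ plus a multiple of the mass. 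This yields at once that $J(c)>-\infty$ and that every minimizing sequence for $J(c)$ is bounded in $H^1\times H^1$.

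The two ingredients that make the compactness work are negativity and strict subadditivity, both coming from a scaling identity. The dilation $(u,v)\mapsto(\sigma^2 u(\sigma\,\cdot),\sigma^2 v(\sigma\,\cdot))$ with $\sigma=\theta^{1/(4-d)}$ multiplies the mass by $\theta$ and both the kinetic and the interaction terms by $\sigma^{6-d}$, so that $J(\theta c)=\theta^{p}J(c)$ with $p:=\frac{6-d}{4-d}>1$; testing $E$ on a fixed profile under the mass--preserving dilation $u\mapsto\lambda^{d/2}u(\lambda\,\cdot)$ (which sends the kinetic energy to $0$ like $\lambda^2$, faster than the interaction $\lambda^{d/2}$ since $d/2<2$) shows $J(c)<0$. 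As $p>1$, superadditivity of $t\mapsto t^{p}$ gives the strict subadditivity $J(c)<J(c_1)+J(c_2)$ for all $c_1+c_2=c$, $c_1,c_2>0$. I then apply the concentration--compactness alternative to the densities $|u_n|^2+2|v_n|^2$ of total mass $c$: vanishing is excluded because it forces $\|u_n\|_{L^3},\|v_n\|_{L^3}\to 0$, hence $\liminf E(u_n,v_n)\ge0>J(c)$; dichotomy is excluded by strict subadditivity. Only compactness survives, giving translations $y_n$ along which a subsequence converges strongly in $H^1\times H^1$ to $(u,v)$ with $M(u,v)=c$ and $E(u,v)=J(c)$; since $J(c)<0$ while $E(u,0),E(0,v)\ge0$, neither component vanishes. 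This establishes parts~(1)--(3) and the nonemptiness of $\mathcal{M}_c$.

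For part~(4) I apply the Lagrange multiplier rule to the minimization of $E$ under the single constraint $M=c$: there is exactly one real multiplier, and computing the first variations of $E$ and $M$ shows that the specific coefficient $2$ in $M$ forces the limit to solve \eqref{sys-ell-equ} with $\omega_1=\omega$ and $\omega_2=2\omega$, i.e. precisely the mass--resonant relation $\omega_2=2\omega_1=2\omega$. Elliptic regularity upgrades $(u,v)$ to a classical solution, and the diamagnetic inequality $\|\nabla|u|\|_{L^2}\le\|\nabla u\|_{L^2}$ together with $\text{Re}\langle v,u^2\rangle\le\langle|v|,|u|^2\rangle$ shows that passing to $(|u|,|v|)$ does not increase the energy while preserving the mass; analysing the equality cases yields the phase representation $u=e^{i\theta_1}\vartheta$, $v=e^{i\theta_2}\zeta$ with $\vartheta,\zeta\ge0$. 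Finally, testing the two equations against $(\overline{\phi},\overline{\psi})$ and against $(x\cdot\nabla\overline{\phi},x\cdot\nabla\overline{\psi})$ produces a Nehari and a Pohozaev identity whose combination gives $\omega=\frac{6-d}{d}\cdot\frac{\|\nabla u\|_{L^2}^2+\kappa\|\nabla v\|_{L^2}^2}{c}>0$, which proves $\omega>0$ for $d\le 3$.

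The main obstacle is part~(5), the ground state property, because the value function is strictly concave ($J(c)=c^{p}J(1)$ with $p>1$ and $J(1)<0$), so a naive comparison of the $S_\omega$ values runs the wrong way. The resolution is to note that the \emph{same} Nehari and Pohozaev identities, applied to an arbitrary nonzero solution $(\tilde\phi,\tilde\psi)$ of \eqref{sys-ell-equ} at the fixed frequency $\omega$, collapse the action to a single multiple of its mass, $S_\omega(\tilde\phi,\tilde\psi)=\frac{\omega}{6-d}\,M(\tilde\phi,\tilde\psi)$. On the other hand such a solution is admissible for $J$ at its own mass $\tilde c:=M(\tilde\phi,\tilde\psi)$, so $E(\tilde\phi,\tilde\psi)\ge J(\tilde c)=\tilde c^{\,p}J(1)$; combining this lower bound with the displayed identity and with the equality $E(u,v)=J(c)$ enjoyed by the minimizer forces $\tilde c\ge c$, whence $S_\omega(\tilde\phi,\tilde\psi)\ge S_\omega(u,v)$. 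Thus every element of $\mathcal{M}_c$ minimizes $S_\omega$ among all nonzero solutions of \eqref{sys-ell-equ} and is therefore a ground state, giving part~(5).
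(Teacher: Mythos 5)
Your proof is correct, but for parts (1)--(3) it takes a genuinely different route from the paper. The paper does \emph{not} run concentration--compactness directly on $J(c)$: it first solves the two--constraint problem $I(a,b)$ (Theorem \ref{theo-exi-min}), where strict subadditivity has to be established by the rather laborious case analysis of Lemmas \ref{lem-ine-1}--\ref{lem-ine-3}, and then reduces $J(c)$ to $I\left(a,\frac{c-a}{2}\right)$ via the splitting Lemma \ref{lem-spl-pro} (showing $\|u_n\|_{L^2}^2\rightarrow a$ with $0<a<c$ along a subsequence, and $J(c)=I\left(a,\frac{c-a}{2}\right)$). Your shortcut is the observation that the dilation $(u,v)\mapsto(\sigma^2u(\sigma\cdot),\sigma^2v(\sigma\cdot))$ multiplies $K$ and $P$ by the \emph{same} factor $\sigma^{6-d}$ and the mass by $\sigma^{4-d}$, whence the exact homogeneity $J(\theta c)=\theta^{(6-d)/(4-d)}J(c)$ with exponent $>1$ for $d\leq 3$; combined with $J<0$ this gives strict subadditivity in one line and lets you exclude dichotomy directly for the single--constraint problem. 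This is cleaner, and I verified the exponent and sign bookkeeping; note also that you correctly handle the point the paper handles via Lemma \ref{lem-spl-pro}, namely that the limit has both components nontrivial (since $E(u,0),E(0,v)\geq 0>J(c)$). What the homogeneity trick cannot buy is Theorem \ref{theo-exi-min} itself: the two constraints $\|u\|_{L^2}^2=a$, $\|v\|_{L^2}^2=b$ scale by the same factor under your dilation, so one cannot rescale $(a_1,b_1)$ and $(a_2,b_2)$ with different ratios onto a common profile, and the paper's case-by-case subadditivity lemmas are doing real work there. For parts (4) and (5) your argument coincides with the paper's in substance: the single Lagrange multiplier together with the coefficient $2$ in $M$ produces $\omega_2=2\omega_1$, the Nehari/Pohozaev identities of Lemma \ref{lem-pro-sol} give $\omega>0$ and collapse $S_\omega$ to $\frac{\omega}{6-d}M$, and your derivation of $\tilde c\geq c$ from $E(\tilde\phi,\tilde\psi)\geq J(\tilde c)=\tilde c^{\,p}J(1)$ is the same comparison the paper performs with the explicit rescaling $\gamma=(c/M(\tilde u,\tilde v))^{1/(4-d)}$, just expressed through the homogeneity of $J$ you already established.
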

	We also have the orbital stability of standing waves for \eqref{wor-Syst}. 
	\begin{theorem} \label{theo-sta}
		Let $d\leq 3$ and $c>0$. Then the set $\mathcal{M}_c$ is stable under the flow of \eqref{wor-Syst} in the sense that for any $\epsilon>0$, there exists $\delta>0$ such that if $(u_0,v_0) \in H^1 \times H^1$ satisfies
		\[
		\inf_{(w,z) \in \mathcal{M}_c} \|(u_0,v_0) - (w,z) \|_{H^1\times H^1} <\delta,
		\]
		then the global solution $(u(t),v(t))$ of \eqref{wor-Syst} with initial data $(u_0,v_0)$ satisfies
		\[
		\sup_{t\geq 0} \inf_{(w,z) \in \mathcal{M}_c} \|(u(t),v(t))-(w,z)\|_{H^1\times H^1} < \epsilon.
		\]
	\end{theorem}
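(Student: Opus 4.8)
The plan is to argue by contradiction, exploiting the conservation of mass $M$ and energy $E$ together with the compactness of minimizing sequences furnished by Theorem \ref{theo-exi-min-J}. Before starting, I would first record the global existence of the flow for data near $\mathcal{M}_c$. Since $d \le 3$, the interaction term obeys a Gagliardo--Nirenberg estimate
\[
|\mathrm{Re}\langle v, u^2\rangle| \le C\|u\|_{L^3}^2\|v\|_{L^3} \le \tfrac14\left(\|\nabla u\|_{L^2}^2 + \kappa\|\nabla v\|_{L^2}^2\right) + C\big(M(u,v)\big),
\]
which is precisely the coercivity already used to prove that $J(c) > -\infty$ and that minimizing sequences are bounded in Theorem \ref{theo-exi-min-J}. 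Combined with the conservation laws of Proposition \ref{prop-lwp-wor}, this yields a uniform bound on $\|\nabla u(t)\|_{L^2}^2 + \kappa\|\nabla v(t)\|_{L^2}^2$ along the flow, hence on $\|(u(t),v(t))\|_{H^1\times H^1}$; since for $d \le 5$ the local existence time depends only on the $H^1$-norm of the data, the solution extends globally.

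Now suppose, for contradiction, that $\mathcal{M}_c$ is not stable. Then there exist $\epsilon_0 > 0$, a sequence of initial data $(u_{0,n}, v_{0,n}) \in H^1 \times H^1$ with
\[
\inf_{(w,z)\in\mathcal{M}_c}\|(u_{0,n},v_{0,n}) - (w,z)\|_{H^1\times H^1} < \tfrac1n,
\]
and times $t_n \ge 0$ for which the corresponding global solutions $(u_n, v_n)$ satisfy $\inf_{(w,z)\in\mathcal{M}_c}\|(u_n(t_n),v_n(t_n)) - (w,z)\|_{H^1\times H^1} \ge \epsilon_0$. Using the continuity of $t \mapsto (u_n(t), v_n(t))$ in $H^1 \times H^1$ and the initial closeness, I would replace $t_n$ by the first exit time so that
\[
\inf_{(w,z)\in\mathcal{M}_c}\|(u_n(t_n),v_n(t_n)) - (w,z)\|_{H^1\times H^1} = \epsilon_0 \quad \text{for all } n.
\]

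The key step is to show that $(u_n(t_n), v_n(t_n))_{n\ge 1}$ is a minimizing sequence for $J(c)$. Since every element of $\mathcal{M}_c$ has mass $c$, energy $J(c)$, and (by the coercivity above) $H^1$-norm bounded by a constant depending only on $c$, the continuity of $M$ and $E$ on bounded subsets of $H^1 \times H^1$ together with the initial closeness give $M(u_{0,n},v_{0,n}) \to c$ and $E(u_{0,n},v_{0,n}) \to J(c)$. By conservation of mass and energy, $M(u_n(t_n),v_n(t_n)) = M(u_{0,n},v_{0,n}) \to c$ and $E(u_n(t_n),v_n(t_n)) = E(u_{0,n},v_{0,n}) \to J(c)$, so $(u_n(t_n),v_n(t_n))_{n\ge 1}$ is indeed a minimizing sequence for $J(c)$ in the sense of the paper's convention. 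Applying part (3) of Theorem \ref{theo-exi-min-J} then forces
\[
\inf_{(w,z)\in\mathcal{M}_c}\|(u_n(t_n),v_n(t_n)) - (w,z)\|_{H^1\times H^1} \to 0,
\]
contradicting the equality with $\epsilon_0 > 0$, which proves the theorem.

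The main obstacle I anticipate is not the compactness input --- that is settled once and for all in Theorem \ref{theo-exi-min-J} --- but rather verifying global forward existence and legitimizing the first-exit-time argument, i.e. ensuring the flow map is continuous in $H^1 \times H^1$ on the relevant interval and that the uniform $H^1$ bound genuinely rules out finite-time blow-up for $d \le 3$. A minor technical point to handle carefully is that the data need not lie exactly on the constraint surface $M = c$, so one must pass through $M(u_{0,n},v_{0,n}) \to c$ rather than $M = c$; this is harmless precisely because the paper's definition of a minimizing sequence allows $M \to c$ rather than $M = c$.
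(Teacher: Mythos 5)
Your proposal is correct and follows essentially the same contradiction argument as the paper: use conservation of mass and energy together with the boundedness of $\mathcal{M}_c$ to show that $(u_n(t_n),v_n(t_n))_{n\geq 1}$ is a minimizing sequence for $J(c)$, then invoke the compactness of minimizing sequences from Theorem \ref{theo-exi-min-J} to contradict the assumed $\epsilon_0$-separation (the paper uses Item (1) plus translation invariance of $\mathcal{M}_c$, which is exactly your Item (3)). Your first-exit-time normalization and the explicit verification of global existence are harmless extras; the latter is already Proposition \ref{prop-gwp-wor}.
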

	
	The second part of this paper is devoted to the existence of blow-up solutions and the characterization of finite time blow-up solutions with minimal mass for \eqref{wor-Syst} under the mass resonance condition in dimension $d=4$. Let $d=4$ and $\kappa=\frac{1}{2}$. We consider the following system NLS:
	\begin{equation} \label{mas-res-Syst}
	\left\{ 
	\renewcommand*{\arraystretch}{1.5}
	\begin{array}{rcl}
	i\partial_t u + \Delta u & = & -2 v \overline{u}, \\
	i\partial_t v + \frac{1}{2} \Delta v & = & - u^2.
	\end{array}
	\right.
	\end{equation}
	We first recall the sharp Gagliardo-Nirenberg inequality related to \eqref{mas-res-Syst}, namely
	\[
	P(u,v) \leq C_{\text{opt}} K(u,v) \sqrt{M(u,v)},
	\]
	where 
	\[
	P(u,v) = \text{Re} (\langle v, u^2 \rangle), \quad K(u,v)= \|\nabla u\|^2_{L^2}+ \frac{1}{2} \|\nabla v\|^2_{L^2}, \quad M(u,v)= \|u\|^2_{L^2} + 2 \|v\|^2_{L^2}.
	\]
	We have
	\[
	\frac{1}{C_{\text{opt}}} := \inf \{ J(u,v) \ : \ (u,v) \in \mathcal{P} \},
	\]
	where
	\[
	J(u,v) = \frac{K(u,v) \sqrt{M(u,v)}}{ P(u,v)},
	\]
	and 
	\[
	\mathcal{P} := \{ (u,v) \in H^1 \times H^1 \backslash \{(0,0)\} \ : \ P(u,v)>0 \}.
	\]
	\begin{theorem} [Sharp Gagliardo-Nirenberg inequality \cite{HOT}] \label{theo-sha-GN-cla}
		Let $d=4$ and $\kappa=\frac{1}{2}$. Then the sharp constant $C_{\emph{opt}}$ is attained by a pair of functions $(\phi_0, \psi_0) \in H^1\times H^1$ which is a positive radially symmetric solution of 
		\begin{equation} \label{ell-equ-mas-res}
		\left\{ 
		\renewcommand*{\arraystretch}{1.3}
		\begin{array}{rcl} 
		-\Delta \phi + \phi & = & 2 \psi \overline{\phi}, \\ 
		-\frac{1}{2} \Delta \psi + 2 \psi & = & \phi^2.
		\end{array}
		\right.
		\end{equation}
		Moreover, 
		\begin{align} \label{sha-con-GN-cla}
		C_{\emph{opt}} = \frac{1}{2\sqrt{M(\phi_0,\psi_0)}}.
		\end{align}
	\end{theorem}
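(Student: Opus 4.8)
The plan is to exhibit $1/C_{\mathrm{opt}}$ as the value of a compact constrained variational problem, extract a maximizer, identify it (after rescaling) with a solution of \eqref{ell-equ-mas-res}, and then read off the constant from the Nehari and Pohozaev identities. First I would record that the inequality holds with a finite constant, i.e. that $\inf\{J(u,v):(u,v)\in\mathcal{P}\}>0$. Since $d=4$, H\"older's inequality gives $P(u,v)\le\|v\|_{L^3}\|u\|_{L^3}^2$, and the Gagliardo--Nirenberg inequality $\|f\|_{L^3}\lesssim\|\nabla f\|_{L^2}^{2/3}\|f\|_{L^2}^{1/3}$ then controls $P$ by $\|\nabla v\|_{L^2}^{2/3}\|v\|_{L^2}^{1/3}\|\nabla u\|_{L^2}^{4/3}\|u\|_{L^2}^{2/3}$; a weighted Young inequality absorbs this into $K(u,v)\sqrt{M(u,v)}$, so $J$ is bounded below by a positive constant.

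Next I would exploit the two scaling invariances: under $(u,v)\mapsto(su(\mu\,\cdot),sv(\mu\,\cdot))$ one has $K\mapsto s^2\mu^{-2}K$, $M\mapsto s^2\mu^{-4}M$ and $P\mapsto s^3\mu^{-4}P$, so $J$ is invariant and any $(u,v)\in\mathcal{P}$ can be normalized to $K=M=1$. Consequently $1/C_{\mathrm{opt}}=1/\sup\{P(u,v):K(u,v)=M(u,v)=1\}$, and it suffices to produce a maximizer of $P$ on this constraint set. The compactness is then won by symmetrization followed by the radial embedding. Replacing $(u,v)$ by $(|u|,|v|)$ and then by their symmetric decreasing rearrangements $(u^\ast,v^\ast)$ does not increase $K$ (P\'olya--Szeg\H{o}), preserves $M$, and does not decrease $P$, since $P(u,v)\le\int|v|\,|u|^2\,dx\le\int v^\ast(u^\ast)^2\,dx=P(u^\ast,v^\ast)$ by the Hardy--Littlewood inequality; hence the supremum is attained, if at all, among nonnegative radial functions. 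Taking a maximizing sequence of nonnegative radial functions with $K=M=1$, which is bounded in $H^1\times H^1$, I would extract a weak limit, use the compactness of $H^1_{\mathrm{rad}}\hookrightarrow L^3$ (Strauss) to pass to the limit in $P$, conclude the limit is nontrivial (its $P$ equals the positive supremum), and invoke weak lower semicontinuity for $K$ and $M$. A short rescaling argument then forces $K=M=1$ in the limit, so the weak limit is a genuine maximizer $(\tilde\phi,\tilde\psi)$. I expect this compactness step---ensuring that no mass escapes so that the constraints remain saturated---to be the main obstacle, although symmetrization together with Strauss circumvents the full concentration--compactness dichotomy.

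It remains to identify the maximizer and compute the constant. Writing the Euler--Lagrange equations for the quotient $J$ at $(\tilde\phi,\tilde\psi)$ yields a system of the form $-\Delta\tilde\phi+\tfrac{K}{2M}\tilde\phi=\tfrac{K}{P}\tilde\psi\,\overline{\tilde\phi}$ and $-\tfrac12\Delta\tilde\psi+\tfrac{K}{M}\tilde\psi=\tfrac{K}{2P}\tilde\phi^2$, which is of the shape \eqref{sys-ell-equ} with $\omega_2=2\omega_1$; the two scaling invariances allow me to rescale $(\tilde\phi,\tilde\psi)$ to a pair $(\phi_0,\psi_0)$ for which all coefficients become exactly those of \eqref{ell-equ-mas-res}. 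Elliptic regularity then shows $(\phi_0,\psi_0)$ is a classical solution, and the strong maximum principle gives strict positivity.

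Finally, pairing the two equations of \eqref{ell-equ-mas-res} with $\overline{\phi_0}$ and $\overline{\psi_0}$ and integrating gives $\|\nabla\phi_0\|_{L^2}^2+\|\phi_0\|_{L^2}^2=2P$ and $\tfrac12\|\nabla\psi_0\|_{L^2}^2+2\|\psi_0\|_{L^2}^2=P$, whose sum is the Nehari identity $K+M=3P$; the Pohozaev identity in $d=4$ (obtained by testing with $x\cdot\nabla$, equivalently by differentiating the action along the dilation $\phi(\cdot/\lambda)$) gives the independent relation $K+2M=4P$. Subtracting the two yields $M=P$ and hence $K=2M$, so that $1/C_{\mathrm{opt}}=J(\phi_0,\psi_0)=K\sqrt{M}/P=2\sqrt{M(\phi_0,\psi_0)}$, which is precisely \eqref{sha-con-GN-cla}.
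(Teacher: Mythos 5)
Your proposal is correct: the positivity of $\inf J$, the two-parameter scaling normalization $K=M=1$, the symmetrization/Strauss compactness, the Euler--Lagrange system with its rescaling to \eqref{ell-equ-mas-res}, and the Nehari--Pohozaev computation $K+M=3P$, $K+2M=4P$ yielding $M=P$, $K=2M$ and hence $C_{\mathrm{opt}}=\frac{1}{2\sqrt{M(\phi_0,\psi_0)}}$ all check out. The paper itself gives no proof of this theorem (it is quoted from \cite{HOT}), but your argument is essentially the one the paper attributes to that reference, namely the variational method based on rearrangement and the compact embedding $H^1_{\mathrm{rad}}(\mathbb{R}^4)\hookrightarrow L^3$.
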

	The main reason for considering the blow-up in dimension $d=4$ comes from the following fact.
	\begin{proposition} [GWP in $H^1$] \label{prop-gwp-wor}
		If $d\leq 3$, then for any $(u_0,v_0) \in H^1 \times H^1$, \eqref{wor-Syst} has a unique pair of solutions $(u,v) \in Y(\mathbb{R}) \times Y(\mathbb{R})$. If $d=4$, then for any $(u_0,v_0) \in H^1 \times H^1$ with 
		\[
		M(u_0,v_0) < M(\phi_0,\psi_0),
		\]
		where $(\phi_0,\psi_0)$ is as in Theorem $\ref{theo-sha-GN-cla}$, \eqref{wor-Syst} has a unique pair of solutions $(u,v) \in Y(\mathbb{R}) \times Y(\mathbb{R})$.
	\end{proposition}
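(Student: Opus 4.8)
The plan is to promote the local solutions provided by Proposition~\ref{prop-lwp-wor} to global ones via a uniform-in-time bound on the $H^1\times H^1$ norm, combined with the blow-up alternative. Recall from the remark following Proposition~\ref{prop-lwp-h1} that for $d\leq 5$ the local existence time depends only on $\|u_0\|_{H^1}+\|v_0\|_{H^1}$; hence if the maximal forward time $T^\ast$ were finite, then $\|u(t)\|_{H^1}+\|v(t)\|_{H^1}\to\infty$ as $t\uparrow T^\ast$. It therefore suffices to bound this quantity uniformly on the existence interval (the argument for negative times is identical), and uniqueness is inherited from the local theory. Since mass conservation already controls the $L^2\times L^2$ part, everything reduces to bounding the kinetic quantity $K_\kappa(u,v):=\|\nabla u\|_{L^2}^2+\kappa\|\nabla v\|_{L^2}^2$. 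Writing the energy conservation law as $\tfrac12 K_\kappa(u(t),v(t))=E(u_0,v_0)+P(u(t),v(t))$ with $P(u,v)=\mathrm{Re}\langle v,u^2\rangle$, the whole issue is to control the interaction term $P$ by $K_\kappa$ and the conserved mass $M_0:=M(u_0,v_0)$.

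For $d\leq 3$ this is routine. H\"older's inequality gives $|P(u,v)|\leq\|v\|_{L^3}\|u\|_{L^3}^2$, and the Gagliardo-Nirenberg inequality $\|f\|_{L^3}\lesssim\|\nabla f\|_{L^2}^{d/6}\|f\|_{L^2}^{1-d/6}$ yields $|P(u,v)|\lesssim K_\kappa^{d/4}M_0^{\gamma}$ for a suitable exponent $\gamma=\gamma(d)$. As $d\leq 3$ forces $d/4<1$, the interaction is energy-subcritical and Young's inequality allows the absorption $|P(u(t),v(t))|\leq\tfrac14 K_\kappa(u(t),v(t))+C(M_0)$. Substituting into the energy identity gives $\tfrac14 K_\kappa\leq E(u_0,v_0)+C(M_0)$, a bound depending only on the conserved quantities; combined with mass conservation this is the sought uniform $H^1\times H^1$ bound.

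The crux is the $L^2$-critical case $d=4$, $\kappa=\tfrac12$, where $d/4=1$ and the naive absorption fails, so the sharp constant becomes decisive. Here I would invoke the sharp Gagliardo-Nirenberg inequality of Theorem~\ref{theo-sha-GN-cla}, namely $P(u,v)\leq C_{\mathrm{opt}}K(u,v)\sqrt{M(u,v)}$ with $K=\|\nabla u\|_{L^2}^2+\tfrac12\|\nabla v\|_{L^2}^2$. Applying it at time $t$ and using $M(u(t),v(t))=M_0$ gives
\[
E(u_0,v_0)=\tfrac12 K(u(t),v(t))-P(u(t),v(t))\geq\Big(\tfrac12-C_{\mathrm{opt}}\sqrt{M_0}\Big)K(u(t),v(t)).
\]
By \eqref{sha-con-GN-cla} one has $C_{\mathrm{opt}}\sqrt{M_0}=\tfrac12\sqrt{M_0/M(\phi_0,\psi_0)}$, so the hypothesis $M_0<M(\phi_0,\psi_0)$ makes the prefactor $\tfrac12-C_{\mathrm{opt}}\sqrt{M_0}$ strictly positive. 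The same inequality (valid for all pairs, trivially when $P\leq 0$) shows $E(u_0,v_0)\geq 0$, so
\[
K(u(t),v(t))\leq\frac{E(u_0,v_0)}{\tfrac12-C_{\mathrm{opt}}\sqrt{M_0}}
\]
is a finite, time-independent bound; with mass conservation this again closes the argument.

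The only genuine obstacle is thus the sharpness in $d=4$: global existence holds precisely because the mass threshold keeps $\tfrac12-C_{\mathrm{opt}}\sqrt{M_0}$ positive, and at $M_0=M(\phi_0,\psi_0)$ this coefficient vanishes and the estimate degenerates---consistent with the minimal-mass blow-up solutions analyzed in the second part of the paper. The one point to verify carefully is that the kinetic normalizations are consistent between the energy functional and the Gagliardo-Nirenberg statement (the factor $\tfrac12$ on $\|\nabla v\|_{L^2}^2$ coming from $\kappa=\tfrac12$), so that the threshold given by \eqref{sha-con-GN-cla} matches exactly.
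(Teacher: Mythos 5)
Your proposal is correct and follows essentially the same route as the paper: for $d\leq 3$ the Gagliardo--Nirenberg estimate \eqref{GN-app} plus Young's inequality absorbs $P$ into the kinetic term, and for $d=4$ the sharp inequality of Theorem~\ref{theo-sha-GN-cla} together with mass conservation gives $E(u_0,v_0)\geq\bigl(\tfrac12-\tfrac12\sqrt{M(u_0,v_0)/M(\phi_0,\psi_0)}\bigr)K(u(t),v(t))$, which is exactly the paper's computation. Your added remarks on the blow-up alternative (local time depending only on the $H^1$-norm for $d\leq 5$) and on the degeneration of the coefficient at the mass threshold are consistent with, and slightly more explicit than, the paper's closing sentence.
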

	To see Proposition $\ref{prop-gwp-wor}$, we consider two cases: $d\leq 3$ and $d=4$. In the case $d\leq 3$, we use the Gagliardo-Nirenberg inequality
	\begin{equation}\label{GN}
	\|u\|_{L^3} \leq C \|\nabla u\|_{L^2}^{\frac{d}{6}} \|u\|_{L^2}^{1-\frac{d}{6}},
	\end{equation}
	and the conservation of mass to have
	\begin{align} \label{GN-app}
	|P(u,v)| &\leq c \|v\|_{L^3} \|u\|^2_{L^3} \leq c C^3 \|\nabla u\|_{L^2}^{\frac{d}{3}} \|u\|^{2-\frac{d}{3}}_{L^2} \|\nabla v\|^{\frac{d}{6}}_{L^2} \|v\|^{1-\frac{d}{6}}_{L^2} \nonumber \\
	&\leq c C^3 [K(u,v)]^{\frac{d}{6}} [M(u,v)]^{1-\frac{d}{6}} \left(\frac{1}{\kappa} K(u,v) \right)^{\frac{d}{12}} \left(\frac{1}{2} M(u,v)\right)^{\frac{1}{2}-\frac{d}{12}} \nonumber\\
	&\leq c C^3 \kappa^{-\frac{d}{12}} 2^{\frac{d}{12}-\frac{1}{2}} [M(u,v)]^{\frac{3}{2}-\frac{d}{4}} [K(u,v)]^{\frac{d}{4}} \\
	&= c C^3 \kappa^{-\frac{d}{12}} 2^{\frac{d}{12}-\frac{1}{2}} [M(u_0,v_0)]^{\frac{3}{2}-\frac{d}{4}} [K(u,v)]^{\frac{d}{4}}. \nonumber
	\end{align}
	This implies that $|\text{Re} (\langle v, u^2\rangle)| \leq A [K(u,v)]^{\frac{d}{4}}$ for some constant $A>0$. By the Young inequality, we estimate for any $\epsilon>0$,
	\[
	A[K(u,v)]^{\frac{d}{4}} \leq \epsilon K(u,v) + C(\epsilon, A). 
	\]
	By the conservation of energy, we get
	\[
	E(u,v)= E(u_0,v_0) = \frac{1}{2} K(u,v) - P(u,v) \geq  \left(\frac{1}{2} - \epsilon\right) K(u,v) - C(\epsilon, A).
	\]
	Choosing $\epsilon>0$ small enough, we get the uniform bound on $K(u,v)$. In the case $d=4$, it follows from the sharp Gagliardo-Nirenberg in 4D given in Theorem $\ref{theo-sha-GN-cla}$ that
	\[
	|P(u,v)| \leq \frac{1}{2 \sqrt{M(\phi_0,\psi_0)}} K(u,v) \sqrt{M(u,v)} = \frac{\sqrt{ M(u_0,v_0)}}{2 \sqrt{M(\phi_0,\psi_0)}} K(u,v),
	\]
	where $(\phi_0,\psi_0)$ is as in Theorem $\ref{theo-sha-GN-cla}$. Thus,
	\[
	E(u,v)= E(u_0,v_0) = \frac{1}{2} K(u,v) - P(u,v) \geq \left(\frac{1}{2} - \frac{1}{2} \sqrt{\frac{M(u_0,v_0)}{M(\phi_0,\psi_0)} } \right) K(u,v).
	\]
	This implies that if $M(u_0,v_0) < M(\phi_0,\psi_0)$, then the kinetic energy is bounded uniformly. Therefore, in both cases, the $H^1$-norm of initial data is bounded uniformly in time, and the result follows.
	
	Proposition $\ref{prop-gwp-wor}$ tells us that the dimension $d=4$ is the smallest dimension for which the large data blow-up solutions of \eqref{wor-Syst} may occur. Our next result is the blow-up criteria for \eqref{mas-res-Syst} with non-radial initial data. 
	\begin{theorem} \label{theo-blo-non-rad}
		Let $d=4$ and $\kappa=\frac{1}{2}$. Let $(u_0,v_0) \in H^1 \times H^1$ be such that the corresponding solution (not necessary radial) to \eqref{mas-res-Syst} exists on the maximal time interval $[0,T)$. If $E(u_0,v_0)<0$, then either the solution blows up in finite time or the solution blows up infinite time and there exists a time sequence $(t_n)_{n\geq 1}$ such that $t_n \rightarrow +\infty$ and 
		\begin{align} \label{blo-non-rad}
		\lim_{n\rightarrow \infty} \|(u(t_n), v(t_n))\|_{H^1 \times H^1} = \infty.
		\end{align}
	\end{theorem}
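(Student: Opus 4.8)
The plan is to argue by contradiction using a localized virial (Morawetz-type) identity adapted to the mass-resonance value $\kappa=\tfrac12$. Suppose the conclusion fails: then the solution is global (no finite-time blow-up) and $\limsup_{t\to\infty}\|(u(t),v(t))\|_{H^1\times H^1}<\infty$, so that together with the local theory $\sup_{t\ge0}K(u(t),v(t))=:K^*<\infty$, while $M(u(t),v(t))=M(u_0,v_0)=:M_0$ is conserved. I will derive a contradiction. Fix a radial cutoff $\varphi\in C^\infty(\mathbb{R}^4)$ with $\varphi(x)=|x|^2$ for $|x|\le1$, $\varphi$ constant for $|x|\ge2$, and $\partial_r^2\varphi\le2$, $\partial_r\varphi/r\le2$ everywhere; set $\varphi_R(x)=R^2\varphi(x/R)$ and define the localized variance
\[
V_R(t)=\int_{\mathbb{R}^4}\varphi_R(x)\big(|u(t,x)|^2+2|v(t,x)|^2\big)\,dx.
\]

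First I would compute the two derivatives of $V_R$. Because $\kappa=\tfrac12$, the nonlinear contributions to $\partial_t(|u|^2+2|v|^2)$ cancel pointwise and the two momentum densities enter with equal weight, so that
\[
V_R'(t)=2\int \nabla\varphi_R\cdot\big(\mathrm{Im}(\overline u\nabla u)+\mathrm{Im}(\overline v\nabla v)\big)\,dx,
\]
with no nonlinear term. Differentiating once more, using the momentum evolution for each field with its own $\alpha$ ($\alpha_u=1$, $\alpha_v=\tfrac12$) and integrating by parts — in particular reducing all gradient nonlinear terms to a single one via the identity $\int\nabla\varphi_R\cdot\nabla(v\overline u^2)=-\int\Delta\varphi_R\,v\overline u^2$ — yields
\[
V_R''(t)=\sum_{j,k}\int \partial_j\partial_k\varphi_R\big(4\,\mathrm{Re}(\partial_ju\overline{\partial_ku})+2\,\mathrm{Re}(\partial_jv\overline{\partial_kv})\big)dx-\int\Delta^2\varphi_R\big(|u|^2+\tfrac12|v|^2\big)dx-2\int\Delta\varphi_R\,\mathrm{Re}(v\overline u^2)\,dx.
\]
For $\varphi_R(x)=|x|^2$ this collapses to $V_R''(t)=8K(u,v)-16P(u,v)=16E(u_0,v_0)$, the exact $L^2$-critical virial identity in $d=4$.

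Comparing the localized identity with $16E_0$, all corrections are supported in $\{|x|\ge R\}$. The Hessian correction has a good sign since $\partial_r^2\varphi\le2$ and $\partial_r\varphi/r\le2$ give $\nabla^2\varphi_R\le2\,\mathrm{Id}$, so the kinetic part is bounded above by $8K$; the biharmonic term is $O(R^{-2}M_0)$; and only the far-field nonlinear term survives with a bad sign, giving
\[
V_R''(t)\le16E_0+\frac{C}{R^2}M_0+C\int_{|x|\ge R}|v(t)||u(t)|^2\,dx.
\]
On the other hand, since $|\nabla\varphi_R|\le CR$, the uniform bounds yield $|V_R'(t)|\le CR\big(\|u\|_{L^2}\|\nabla u\|_{L^2}+\|v\|_{L^2}\|\nabla v\|_{L^2}\big)\le C'R$ for all $t$. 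Hence if I can arrange the right-hand side above to be $\le 8E_0<0$ for all $t$ by choosing $R$ large, then $V_R'(t)\le V_R'(0)+8E_0t\to-\infty$, contradicting $|V_R'(t)|\le C'R$ and finishing the proof.

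The main obstacle is therefore the uniform-in-time control of $\int_{|x|\ge R}|v||u|^2\,dx$. I would estimate it with the Gagliardo–Nirenberg inequality \eqref{GN} restricted to the exterior region, which in $d=4$ produces a bound of the form $C(K^*)\big(\int_{|x|\ge R}(|u|^2+|v|^2)\,dx\big)^{1/2}$, reducing matters to showing that the exterior mass is uniformly small for $R$ large. Ruling out mass leaking to spatial infinity uniformly in $t$ is exactly the delicate step in the non-radial setting: using conservation of mass, the uniform $H^1$-bound, and the sharp Gagliardo–Nirenberg inequality of Theorem~\ref{theo-sha-GN-cla} (which via $E_0<0$ forces the solution into the regime $M_0>M(\phi_0,\psi_0)$), one makes this term $\le 8|E_0|$ for $R$ large and closes the contradiction. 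It is precisely the impossibility of controlling this exterior term in full generality that separates genuine finite-time blow-up from the grow-up alternative, and this is why the statement is phrased as a dichotomy rather than as finite-time blow-up.
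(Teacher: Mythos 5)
Your setup (contradiction hypothesis, localized virial identity, sign of the Hessian correction, $O(R^{-2})$ biharmonic term, and the reduction of everything to the exterior nonlinear term $\int_{|x|\ge R}|v||u|^2\,dx$) matches the paper's strategy, and your unlocalized identity $V''=16E$ is correct. But the final step contains a genuine gap: you assert that conservation of mass, the uniform $H^1$ bound, and the sharp Gagliardo--Nirenberg inequality let you make the exterior mass $\int_{|x|\ge R}(|u(t)|^2+|v(t)|^2)\,dx$ small \emph{uniformly in $t\in[0,\infty)$} for $R$ large. No such uniform-in-time control follows from those ingredients --- a uniformly bounded global solution can in principle transport its mass to spatial infinity (this is exactly what happens for a traveling soliton-type configuration), so for each fixed $R$ the exterior mass need not stay small for all time. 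This is precisely the obstruction you yourself flag as ``the delicate step,'' and the sentence claiming it can be closed is doing all the work without a proof.

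The paper (following Du--Wu--Zhang) resolves this differently, and the difference matters for the endgame. It controls the exterior mass only on a \emph{finite, $R$-dependent time window}: writing $I_{\varphi_R}(t)=I_{\varphi_R}(0)+\int_0^t\frac{d}{ds}I_{\varphi_R}(s)\,ds$ and using $\bigl|\frac{d}{ds}I_{\varphi_R}\bigr|\lesssim R^{-1}$ (from $\|\nabla\varphi_R\|_{L^\infty}\lesssim R^{-1}$ and the uniform $H^1$ bound), one gets $\int_{|x|\ge R}(|u(t)|^2+2|v(t)|^2)\,dx\le o_R(1)+\epsilon$ only for $t\in[0,T_0]$ with $T_0=\epsilon R/C$. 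Consequently $V_{\chi_R}''(t)\le 4E_0<0$ holds only on $[0,T_0]$, and your proposed contradiction via $V_R'(t)\to-\infty$ versus $|V_R'(t)|\le C'R$ does not close: over a window of length $T_0\sim R$ the drift in $V_R'$ is only $\sim |E_0|\epsilon R/C$, which need not exceed $C'R$. The paper instead integrates the virial inequality \emph{twice} over $[0,T_0]$ and exploits the refined bounds $V_{\chi_R}(0)=o_R(1)R^2$ and $V_{\chi_R}'(0)=o_R(1)R$ (obtained by splitting $|x|\le\sqrt R$ from $|x|>\sqrt R$), so that $V_{\chi_R}(T_0)\le o_R(1)R^2+2E_0\epsilon^2R^2/C^2<0$ for $R$ large, contradicting $V_{\chi_R}\ge0$. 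To repair your argument you would need to replace the uniform-in-time claim by this windowed control and restructure the conclusion accordingly.
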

	The proof of this result is based on the argument of Du-Wu-Zhang \cite{DWZ} using the localized virial estimates. One can rule out the infinite time blow-up given in Theorem $\ref{theo-blo-non-rad}$ by considering radially symmetric initial data with negative energy. Note that in the case initial data has finite variance and negative energy, the existence of finite time blow-up solutions was proved by Hayashi-Ozawa-Takana \cite{HOT}.
	\begin{theorem} \label{theo-blo-rad}
		Let $d=4$ and $\kappa=\frac{1}{2}$. Let $(u_0,v_0) \in H^1 \times H^1$ be radial and satisfies $E(u_0,v_0)<0$. Then the corresponding solution to \eqref{mas-res-Syst} blows up in finite time.
	\end{theorem}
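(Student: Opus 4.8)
The plan is to run a localized virial (convexity) argument adapted to radial data, in the spirit of Ogawa--Tsutsumi, using the mass resonance $\kappa=\tfrac12$ to close the virial identity. Fix a radial cutoff $\varphi\in C^\infty(\mathbb{R}^4)$ with $\varphi(x)=|x|^2$ for $|x|\le 1$, $\varphi$ constant for $|x|\ge 2$, and $\varphi''(r)\le 2$ for all $r$, and set $\varphi_R(x)=R^2\varphi(x/R)$. I would study the weighted quantity
\[
V_R(t)=\int_{\mathbb{R}^4}\varphi_R(x)\big(|u(t,x)|^2+2|v(t,x)|^2\big)\,dx,
\]
whose weight $|u|^2+2|v|^2$ is exactly the mass density of $M(u,v)$. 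The first step is to compute $V_R'(t)$: writing $J_u=\mathrm{Im}(\overline u\nabla u)$ and $J_v=\mathrm{Im}(\overline v\nabla v)$, the quadratic interaction terms produced by the two equations are proportional to $\mathrm{Im}(v\overline u^2)$ and $\mathrm{Im}(\overline v u^2)$. Since $\mathrm{Im}(\overline v u^2)=-\mathrm{Im}(v\overline u^2)$ and the weights $1$ and $2$ are tuned to the mass resonance, these cancel pointwise, leaving $V_R'(t)=2\int\nabla\varphi_R\cdot(J_u+J_v)\,dx$. This cancellation is precisely where $\kappa=\tfrac12$ is used.

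Next I would differentiate once more. For the exact weight $\varphi_R=|x|^2$ the computation yields the clean identity $V''(t)=16\,E(u_0,v_0)$ in $d=4$: the free kinetic contributions of $u$ and $v$ (with their Laplacian coefficients $1$ and $\tfrac12$ and weights $1$ and $2$) add up to $8K(u,v)$, while the Pohozaev/scaling relation turns the nonlinear contribution into $16P(u,v)$, so that $V''=8K-16P=16E$. For the truncated weight one gets
\[
V_R''(t)=16\,E(u_0,v_0)+\mathcal{E}_R(t),
\]
where the error $\mathcal{E}_R(t)$ is supported in $\{|x|\ge R\}$ and splits into a kinetic part, a biharmonic part bounded by $CR^{-2}M(u_0,v_0)$ (using conservation of mass), and a nonlinear part controlled by $\int_{|x|\ge R}|v|\,|u|^2\,dx$.

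The crucial step is to show that $\mathcal{E}_R(t)\le CR^{-2}$ uniformly in $t$, with $C$ depending only on the conserved mass. Here radial symmetry enters: for radial functions the angular kinetic terms vanish and the radial kinetic correction is sign-definite (non-positive) thanks to $\varphi''\le 2$, while the radial Sobolev estimate $\|f\|_{L^\infty(|x|\ge R)}^2\lesssim R^{-3}\|f\|_{L^2}\|\partial_r f\|_{L^2}$ in $d=4$ bounds the nonlinear error by $R^{-3/2}$ times a small power of $\|\partial_r u\|_{L^2(|x|\ge R)}$ and $\|\partial_r v\|_{L^2(|x|\ge R)}$. A Young inequality then absorbs this into the negative radial kinetic correction plus a term $C_\epsilon R^{-2}$, leaving a bound that does \emph{not} involve the (possibly growing) kinetic energy. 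This yields $V_R''(t)\le 16E(u_0,v_0)+CR^{-2}$ for all $t$ in the interval of existence.

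To conclude, since $E(u_0,v_0)<0$ I would fix $R$ large enough that $16E(u_0,v_0)+CR^{-2}\le 8E(u_0,v_0)<0$, so that $V_R''(t)\le 8E(u_0,v_0)<0$ throughout. If the solution were global, integrating twice would force $V_R(t)\le V_R(0)+V_R'(0)t+4E(u_0,v_0)t^2\to-\infty$, contradicting $V_R(t)\ge 0$. Hence the maximal time is finite and, by the blow-up alternative, $\|(u(t),v(t))\|_{H^1\times H^1}\to\infty$; this simultaneously rules out the infinite-time blow-up alternative of Theorem \ref{theo-blo-non-rad} for radial data. The main obstacle is exactly the uniform-in-time control of $\mathcal{E}_R(t)$: the nonlinear error a priori involves the kinetic energy, which may grow, and the whole argument hinges on using the radial decay to trade it against the sign-definite kinetic correction so that only the conserved mass survives in the constants.
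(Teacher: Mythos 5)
Your plan is essentially the paper's proof: the same weighted quantity $V_{\chi_R}$, the same cancellation in $V_{\chi_R}'$ from the mass-resonance weights, the same localized second-derivative identity, the radial Sobolev bound plus Young's inequality to trade the nonlinear tail against the sign-definite kinetic correction, and the Glassey convexity argument to conclude (this is Lemma \ref{lem-loc-vir-est} followed by the proof of the theorem in the paper). The one step you should make precise is the absorption itself: after Young, the nonlinear error contributes $C\epsilon\int(8-\Delta\chi_R)^2(|\nabla u|^2+|\nabla v|^2)\,dx$, so you need the \emph{pointwise} inequality $2-\chi_R''\ge C\epsilon\,(8-\Delta\chi_R)^2$ on all of $[0,\infty)$ (condition \eqref{pos-con}), and this does not follow from the generic hypotheses $\varphi=|x|^2$ near $0$, $\varphi$ eventually constant, $\varphi''\le 2$ --- a cutoff whose second derivative returns to $2$ somewhere in the transition region $R<|x|<2R$ would have $2-\chi_R''=0$ while $8-\Delta\chi_R>0$ there, and the absorption would fail. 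The paper secures \eqref{pos-con} by the explicit Ogawa--Tsutsumi profile $\theta'(r)=2[r-(r-1)^3]$ on $1<r\le 1+1/\sqrt{3}$, for which both $2-\chi_R''$ and $8-\Delta\chi_R$ vanish to the same quadratic order $6(r/R-1)^2$ at $r=R$, so that $(8-\Delta\chi_R)^2/(2-\chi_R'')\to 0$ there; adding this explicit construction closes your argument.
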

	The proof of this result is based on localized radial virial estimates and an argument of \cite{OT}. A similar argument has been used in \cite{Dinh} to show the existence of finite time blow-up solutions for the inhomogeneous nonlinear Schr\"odinger equation.
	\begin{definition} \label{defi-G}
		We denote the set of minimizers of $J$ which are positive radially symmetric solutions of \eqref{ell-equ-mas-res} by $\mathcal{G}$. It follows from \eqref{sha-con-GN-cla} that all elements of $\mathcal{G}$ have the same mass, that is, there exists $M_{\emph{gs}}>0$ such that $M(\phi,\psi)= M_{\emph{gs}}$ for any $(\phi,\psi) \in \mathcal{G}$.
	\end{definition}
	An immediate consequence of Proposition $\ref{prop-gwp-wor}$ and Definition $\ref{defi-G}$ is the following global well-posedness result.
	\begin{corollary} \label{coro-glo-wel}
		Let $d=4$ and $\kappa=\frac{1}{2}$. Let $(u_0,v_0) \in H^1 \times H^1$ be such that
		\[
		M(u_0,v_0) < M_{\emph{gs}}.
		\]
		Then the corresponding solution to \eqref{mas-res-Syst} exists globally in time.
	\end{corollary}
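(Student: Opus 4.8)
The plan is to observe that the corollary is essentially a rephrasing of the $d=4$ branch of Proposition \ref{prop-gwp-wor}, once we identify the threshold mass $M_{\mathrm{gs}}$ from Definition \ref{defi-G} with $M(\phi_0,\psi_0)$, where $(\phi_0,\psi_0)$ is the optimizer of the sharp Gagliardo--Nirenberg inequality furnished by Theorem \ref{theo-sha-GN-cla}. Thus the entire task reduces to checking the identity $M_{\mathrm{gs}} = M(\phi_0,\psi_0)$, after which the result is a direct invocation of the existing global well-posedness theory.

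First I would verify that $(\phi_0,\psi_0) \in \mathcal{G}$. By Theorem \ref{theo-sha-GN-cla}, the pair $(\phi_0,\psi_0)$ is a positive radially symmetric solution of \eqref{ell-equ-mas-res} which attains $C_{\mathrm{opt}}$; equivalently, using the variational characterization $1/C_{\mathrm{opt}} = \inf\{J(u,v) : (u,v) \in \mathcal{P}\}$ from the setup preceding that theorem, we have $J(\phi_0,\psi_0) = 1/C_{\mathrm{opt}} = \inf J$. Hence $(\phi_0,\psi_0)$ is a minimizer of $J$, and being in addition a positive radial solution of \eqref{ell-equ-mas-res}, it belongs to $\mathcal{G}$ by the very definition of that set. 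Definition \ref{defi-G}, which asserts that all elements of $\mathcal{G}$ carry the common mass $M_{\mathrm{gs}}$, then forces $M(\phi_0,\psi_0) = M_{\mathrm{gs}}$.

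With this identification in hand, the hypothesis $M(u_0,v_0) < M_{\mathrm{gs}}$ is literally the smallness condition $M(u_0,v_0) < M(\phi_0,\psi_0)$ required by the $d=4$ case of Proposition \ref{prop-gwp-wor}. Applying that proposition yields a unique global pair $(u,v) \in Y(\mathbb{R}) \times Y(\mathbb{R})$ solving \eqref{mas-res-Syst}, which is exactly the claimed conclusion. The argument is almost immediate, and the only point requiring any care is the logical step that the Gagliardo--Nirenberg optimizer actually lies in $\mathcal{G}$, i.e.\ that attaining $C_{\mathrm{opt}}$ forces $J(\phi_0,\psi_0)$ to coincide with $\inf J$; beyond unwinding this definition there is no genuine obstacle.
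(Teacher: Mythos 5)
Your proposal is correct and follows exactly the route the paper intends: the paper states the corollary as ``an immediate consequence of Proposition \ref{prop-gwp-wor} and Definition \ref{defi-G}'', and your argument simply makes explicit the one nontrivial link, namely that the Gagliardo--Nirenberg optimizer $(\phi_0,\psi_0)$ of Theorem \ref{theo-sha-GN-cla} is a positive radial minimizer of $J$ solving \eqref{ell-equ-mas-res}, hence lies in $\mathcal{G}$ and has mass $M_{\mathrm{gs}}$.
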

	Corollary $\ref{coro-glo-wel}$ infers that $M_{\text{gs}}$ is the smallest mass for which the finite time blow-up solutions could appear. In \cite{HOT}, Hayashi-Ozawa-Takana constructed explicit solutions which blow up at finite time and have minimal mass $M_{\text{gs}}$. More precisely, they proved the following result (see \cite[Theorem 6.3]{HOT}).
	\begin{proposition} \label{prop-exp-blo-sol}
		Let $d=4$ and $\kappa = \frac{1}{2}$. Let $(\phi,\psi) \in \mathcal{G}$. For $T>0$, let 
		\begin{equation} \label{pse-con-tra}
		\begin{aligned}
		u(t,x) &= \frac{1}{(T-t)^2} \exp \left( -i\frac{|x|^2}{4(T-t)} + i\frac{t}{T(T-t)}\right) \phi\left(\frac{x}{T-t}\right), \\
		v(t,x) &= \frac{1}{(T-t)^2} \exp \left( -i\frac{|x|^2}{2(T-t)} + i\frac{2t}{T(T-t)}\right) \psi\left(\frac{x}{T-t}\right).
		\end{aligned}
		\end{equation}
		Then $(u,v)$ is a solution of \eqref{wor-Syst}, and satisfies:
		\begin{itemize}
			\item[(1)] $u,v \in C^\infty(-\infty,T), H^\infty(\mathbb{R}^4))$, where $H^\infty = \cap_{m\geq 1} H^m$. 
			\item[(2)] $(u(0),v(0)) = \left( \frac{1}{T^2} \exp \left(-i\frac{|x|^2}{4T} \right) \phi\left(\frac{x}{T}\right), \frac{1}{T^2} \exp\left(-i\frac{|x|^2}{2T}\right) \psi\left(\frac{x}{T}\right) \right)$. 
			\item[(3)] $M(u(0),v(0)) = M(\phi,\psi) = M_{\emph{gs}}= \frac{C_{\emph{opt}}^2}{4}$. 
			\item[(4)] $K(u(t),v(t)) = O((T-t)^{-2})$ as $t\uparrow T$. 
			\item[(5)] $P(u(t), v(t)) = O((T-t)^{-2})$ as $t\uparrow T$.
			\item[(6)] $M(u(t),v(t)) \rightarrow M(\phi,\psi) \delta$ weakly star in $\mathcal{D}'(\mathbb{R}^4)$ as $t\uparrow T$, where $\delta$ is the Dirac delta at the origin.
		\end{itemize}
	\end{proposition}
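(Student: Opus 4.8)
The plan is to verify directly that the explicit pair \eqref{pse-con-tra} solves \eqref{wor-Syst} with $d=4$, $\kappa=\tfrac12$, and then to read off the six listed properties from the scaling structure. The heart of the argument is the first claim, which I would prove by a direct (if lengthy) computation. Write $\rho := T-t$, $y := x/\rho$, and abbreviate the two phases as $\Theta_1 := -\frac{|x|^2}{4\rho} + \frac{t}{T\rho}$ and $\Theta_2 := -\frac{|x|^2}{2\rho} + \frac{2t}{T\rho}$, so that $u = \rho^{-2} e^{i\Theta_1}\phi(y)$ and $v = \rho^{-2} e^{i\Theta_2}\psi(y)$. Two algebraic identities drive everything: first, since $\rho + t = T$, one has $\partial_t\big(\tfrac{t}{T\rho}\big) = \rho^{-2}$ (and $\partial_t\big(\tfrac{2t}{T\rho}\big) = 2\rho^{-2}$); second, $\Theta_2 = 2\Theta_1$, which is precisely the phase matching that the quadratic nonlinearity requires.

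For the $u$-equation I would compute $\partial_t u$ and $\Delta u$ separately, using $\nabla_x \Theta_1 = -\frac{x}{2\rho}$, $\Delta_x\Theta_1 = -\frac{2}{\rho}$ (here $d=4$), $|\nabla_x\Theta_1|^2 = \frac{|y|^2}{4}$, together with the chain-rule relations $\partial_t[\phi(y)] = \rho^{-1}\,y\cdot\nabla\phi(y)$, $\nabla_x[\phi(y)] = \rho^{-1}\nabla\phi(y)$, $\Delta_x[\phi(y)] = \rho^{-2}\Delta\phi(y)$. Forming $i\partial_t u + \Delta u$, the transport terms $\pm i\rho^{-3}(y\cdot\nabla\phi)$, the zeroth-order terms $\pm 2i\rho^{-3}\phi$, and the quadratic-phase terms $\pm\frac{|y|^2}{4}\rho^{-2}\phi$ all cancel, leaving exactly $\rho^{-4}e^{i\Theta_1}(\Delta\phi-\phi)$. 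Invoking the first equation in \eqref{ell-equ-mas-res}, $\Delta\phi - \phi = -2\psi\overline{\phi}$, this equals $-2\rho^{-4}e^{i\Theta_1}\psi\overline{\phi}(y)$; on the other hand $-2v\overline{u} = -2\rho^{-4}e^{i(\Theta_2-\Theta_1)}\psi\overline{\phi}(y)$, and $\Theta_2 - \Theta_1 = \Theta_1$ closes the identity. The $v$-equation is identical in spirit: with $\nabla_x\Theta_2 = -\frac{x}{\rho}$, $\Delta_x\Theta_2 = -\frac{4}{\rho}$, the same cancellations reduce $i\partial_t v + \tfrac12\Delta v$ to $\rho^{-4}e^{i\Theta_2}\big(\tfrac12\Delta\psi - 2\psi\big) = -\rho^{-4}e^{i\Theta_2}\phi^2(y)$ by the second equation in \eqref{ell-equ-mas-res}, which matches $-u^2 = -\rho^{-4}e^{2i\Theta_1}\phi^2(y)$ again because $2\Theta_1 = \Theta_2$.

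The remaining six items follow from elementary scaling. For (1), elliptic regularity applied to \eqref{ell-equ-mas-res} (bootstrapping through Sobolev embeddings) shows $\phi,\psi$ are smooth, and the positive masses $1$ and $2$ in the elliptic system force exponential decay of $\phi,\psi$ and all their derivatives by a standard comparison argument; since $\rho>0$ for $t\in(-\infty,T)$, the explicit formula \eqref{pse-con-tra} then depends smoothly on $t$ and lies in $H^\infty(\mathbb{R}^4)$ for each $t$. Item (2) is the substitution $t=0$, $\rho=T$. For (3)--(6) I would change variables $y=x/\rho$ (Jacobian $\rho^4$ in $d=4$): this gives $\|u(t)\|_{L^2}^2=\|\phi\|_{L^2}^2$ and $\|v(t)\|_{L^2}^2=\|\psi\|_{L^2}^2$, so $M(u(t),v(t))=M(\phi,\psi)=M_{\mathrm{gs}}$ (the final equality being \eqref{sha-con-GN-cla}), proving (3). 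For (4) the same change of variables yields $\|\nabla u(t)\|_{L^2}^2 = \tfrac14\big\||y|\phi\big\|_{L^2}^2 + \rho^{-2}\|\nabla\phi\|_{L^2}^2$, the cross term vanishing because $\phi$ is real; the analogous identity holds for $v$, so the leading $\rho^{-2}$ term gives $K(u(t),v(t))=O((T-t)^{-2})$ (finiteness of $\||y|\phi\|_{L^2}$ using exponential decay). For (5) the phase cancellation $2\Theta_1=\Theta_2$ makes the integrand of $\langle v(t),u^2(t)\rangle$ phase-free, so $P(u(t),v(t))=\rho^{-2}P(\phi,\psi)=O((T-t)^{-2})$. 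Finally (6): testing $|u(t)|^2+2|v(t)|^2$ against $\varphi\in\mathcal{D}(\mathbb{R}^4)$ and changing variables gives $\int(|\phi|^2+2|\psi|^2)(y)\,\varphi(\rho y)\,dy$, which tends to $\varphi(0)\,M(\phi,\psi)$ as $\rho\downarrow 0$ by dominated convergence, i.e. weak-$\ast$ convergence to $M(\phi,\psi)\delta$. The only genuinely non-mechanical inputs are the elliptic regularity and exponential-decay statement underpinning (1) and the finiteness of the weighted norms; the rest is bookkeeping, and I expect the careful tracking of the cancellations in the solution-verification step to be the main (though not deep) obstacle.
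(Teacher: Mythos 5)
Your verification is correct: I checked the cancellations in both equations (the key identities being $\partial_t\bigl(\tfrac{t}{T(T-t)}\bigr)=(T-t)^{-2}$ and $\Theta_2=2\Theta_1$), and the left-hand sides do reduce to $\rho^{-4}e^{i\Theta_1}(\Delta\phi-\phi)$ and $\rho^{-4}e^{i\Theta_2}\bigl(\tfrac12\Delta\psi-2\psi\bigr)$ respectively, which close via \eqref{ell-equ-mas-res}; the scaling computations for (2)--(6) are also right, including the vanishing of the cross term in (4) because $\phi,\psi$ are real. The main point of comparison is that the paper does not prove this proposition at all: it is quoted verbatim as \cite[Theorem 6.3]{HOT}, so your direct computation supplies a self-contained argument where the paper offers only a citation. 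What your route buys is transparency (the phase-matching $\Theta_2=2\Theta_1$ is exactly the mass-resonance condition $\kappa=\tfrac12$ made visible); what it costs is that you must separately justify the regularity and exponential decay of $(\phi,\psi)$ for item (1), which you correctly identify as the only non-mechanical input and which is standard for the elliptic system \eqref{ell-equ-mas-res} with positive masses. One small remark on item (3): the identity $M_{\mathrm{gs}}=\tfrac{C_{\mathrm{opt}}^2}{4}$ as printed is inconsistent with \eqref{sha-con-GN-cla}, which gives $M_{\mathrm{gs}}=\tfrac{1}{4C_{\mathrm{opt}}^2}$; your invocation of \eqref{sha-con-GN-cla} yields the latter, and the discrepancy appears to be a typo in the statement rather than a gap in your argument.
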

	
	Our next result is the classification of finite time blow-up solutions with minimal mass for \eqref{mas-res-Syst} in the case $d=4$ and $\kappa=\frac{1}{2}$.
	\begin{theorem}[Classification minimal mass blow-up solutions] \label{theo-cla-min-mas}
		Let $d=4$ and $\kappa=\frac{1}{2}$. Let $(u_0,v_0) \in H^1 \times H^1$ be such that $M(u_0,v_0) =M_{\emph{gs}}$. Assume that the corresponding solution to \eqref{mas-res-Syst} blows up in finite time $0<T<+\infty$. Then there exist $(\phi,\psi) \in \mathcal{G}$, $\theta_1, \theta_2 \in \mathbb{R}$ and $\rho>0$ such that
		\[
		u_0(x) = e^{i\theta_1} e^{i\frac{\rho^2}{T}} e^{-i\frac{|x|^2}{4T}} \left(\frac{\rho}{T}\right)^2 \phi\left( \frac{\rho x}{T} \right), \quad v_0(x) = e^{i\theta_2} e^{i\frac{\rho^2}{T}} e^{-i\frac{|x|^2}{2T}} \left(\frac{\rho}{T}\right)^2 \psi\left( \frac{\rho x}{T} \right).
		\]
		In particular, 
		\begin{equation} \label{sol-cla}
		\begin{aligned}
		u(t,x) &= e^{i\theta_1} e^{i\frac{\rho^2}{T-t}} e^{-i\frac{|x|^2}{4(T-t)}} \left( \frac{\rho}{T-t}\right)^2 \phi\left( \frac{\rho x}{T-t} \right), \\
		v(t,x) &= e^{i\theta_2} e^{i\frac{\rho^2}{T-t}} e^{-i\frac{|x|^2}{2(T-t)}} \left( \frac{\rho}{T-t}\right)^2 \psi\left( \frac{\rho x}{T-t} \right).
		\end{aligned}
		\end{equation}
	\end{theorem}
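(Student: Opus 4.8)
The plan is to adapt Merle's classification of minimal-mass blow-up solutions for the $L^2$-critical scalar NLS to the system \eqref{mas-res-Syst}; the engine throughout is the variational rigidity encoded in the sharp Gagliardo--Nirenberg inequality of Theorem \ref{theo-sha-GN-cla}. The first ingredient is a compactness lemma at the critical mass: if $(f_n,g_n)_{n\geq1}$ is bounded in $H^1\times H^1$ with $K(f_n,g_n)\to1$, $M(f_n,g_n)\to M_{\mathrm{gs}}$ and $P(f_n,g_n)\to\tfrac{1}{2}$ (so that the sharp inequality is saturated in the limit), then there exist translations $y_n\in\mathbb R^4$, phases, and a ground state $(\phi,\psi)\in\mathcal G$ such that, along a subsequence, $(f_n(\cdot+y_n),g_n(\cdot+y_n))$ converges strongly in $H^1\times H^1$ to $(\phi,\psi)$ up to multiplication by the phases. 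I would prove this exactly as Theorem \ref{theo-exi-min}: the concentration-compactness dichotomy of Lions rules out vanishing through the Gagliardo--Nirenberg inequality and rules out dichotomy through strict subadditivity, leaving compactness, and any strong limit saturating the inequality must belong to $\mathcal G$ by Theorem \ref{theo-sha-GN-cla}.

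Second, I would extract the blow-up profile. Since the solution blows up at $T$, we have $K(t):=K(u(t),v(t))\to\infty$ as $t\uparrow T$, while conservation of energy gives $P(u(t),v(t))=\tfrac{1}{2}K(t)-E_0$, whence $P(t)/K(t)\to\tfrac{1}{2}$. Rescaling by $\rho(t)=K(t)^{-1/2}$, that is setting $\tilde u(t,x)=\rho(t)^2u(t,\rho(t)x)$ and $\tilde v(t,x)=\rho(t)^2v(t,\rho(t)x)$, produces a family with unit kinetic energy, mass $M_{\mathrm{gs}}$, and $P(\tilde u(t),\tilde v(t))\to\tfrac{1}{2}$, so the sharp inequality is saturated as $t\uparrow T$. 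By the compactness lemma, along every sequence $t_n\uparrow T$ there are translations, phases and a ground state such that the rescaled solution converges strongly to it; in particular the solution concentrates all of its mass at a single point $x(t)\to x^\ast$, and one may choose the modulation parameters (scale, phase, center) continuously in $t$.

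The decisive and most delicate step is to show that minimality of the mass forces $u_0$ and $v_0$ to have \emph{finite variance}, i.e. $|x|u_0,\,|x|v_0\in L^2$, even though this is not assumed. I would establish this by a localized virial argument: the uniform concentration from the previous step bounds the mass in the region $\{|x|\geq R\}$ uniformly in $t$, which allows one to pass to the limit $t\uparrow T$ in a truncated second moment and to conclude that $\int|x|^2\big(|u_0|^2+2|v_0|^2\big)\,dx<\infty$. This is the main obstacle, and it is precisely here that the minimal-mass hypothesis is used in an essential way, since it is the compactness of the rescaled flow that prevents mass from leaking to spatial infinity and renders the truncated moments uniformly controlled.

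Finally, once finite variance is available, I would close the argument by pseudo-conformal rigidity. Minimality of the mass together with the sharp inequality first gives $E_0\geq0$. Using the $L^2$-critical virial identity for \eqref{mas-res-Syst}, which expresses $\frac{d^2}{dt^2}\int|x|^2(|u|^2+2|v|^2)\,dx$ as a positive multiple of the conserved energy $E_0$, together with the associated pseudo-conformal conservation law controlling $\|(x+2it\nabla)u(t)\|_{L^2}^2$ and its $v$-analogue, I would combine the concentration profile with the conserved pseudo-conformal functional and the minimality of the mass to force the relevant nonnegative conformal quantity to vanish identically. This rigidity pins $(u,v)$ down to the self-similar form of Proposition \ref{prop-exp-blo-sol}, up to the scaling parameter $\rho>0$, a spatial translation, and the phases $\theta_1,\theta_2$; evaluating at $t=0$ and identifying the profile as the pseudo-conformal transform of a ground state $(\phi,\psi)\in\mathcal G$ then yields the stated expressions for $u_0,v_0$ and for $(u(t),v(t))$ in \eqref{sol-cla}.
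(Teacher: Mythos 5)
Your overall architecture matches the paper's (which follows Hmidi--Keraani \cite{HK}): a compactness lemma at critical mass, extraction of a ground-state profile along $t_n\uparrow T$, an a posteriori finite-variance argument, and pseudo-conformal rigidity via the zero-energy characterization of $\mathcal G$. One difference is minor: you propose to prove the compactness lemma by Lions' concentration-compactness, whereas the paper derives it from a profile decomposition (Proposition \ref{prop-pro-dec} and Lemma \ref{lem-com-lem}); either route can work, but the strict subadditivity you invoke is the one proved for the constrained energy minimization $I(a,b)$, not for the Gagliardo--Nirenberg functional, so ``exactly as Theorem \ref{theo-exi-min}'' would need genuine adaptation.

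The genuine gap is in your third step. The concentration you obtain holds only along sequences $t_n\uparrow T$, at centers $\tilde x_n$ that may escape to infinity; it does not ``bound the mass in $\{|x|\ge R\}$ uniformly in $t$,'' and for a fixed $t<T$ there is no localization at all. The naive bound $|I_R'(t)|=2\,|\int\nabla\chi_R\cdot\mathrm{Im}(\nabla u\bar u+\nabla v\bar v)\,dx|\lesssim R$ degenerates as $R\to\infty$, so you cannot pass to the limit in the truncated second moment from concentration alone. The missing ingredient is the Cauchy--Schwarz inequality of Banica (Lemma \ref{lem-cau-sch-ine} in the paper): at minimal mass, $E(e^{is\varphi}u,e^{2is\varphi}v)\ge0$ for every $s\in\mathbb R$ by the sharp Gagliardo--Nirenberg inequality, and nonpositivity of the discriminant of this quadratic in $s$ yields
\[
\left|\int\nabla\varphi\cdot\mathrm{Im}(\nabla u\bar u+\nabla v\bar v)\,dx\right|\le\sqrt{2E(u,v)}\left(\int|\nabla\varphi|^2(|u|^2+2|v|^2)\,dx\right)^{1/2}.
\]
With a cutoff satisfying $|\nabla\chi_R|^2\le C\chi_R$ this gives $|I_R'(t)|\le C(u_0,v_0)\sqrt{I_R(t)}$ with $C$ independent of $R$, hence $|\sqrt{I_R(t)}-\sqrt{I_R(t_n)}|\le C|t-t_n|$; combined with $I_R(t_n)\to0$ (which follows from the Dirac concentration, whether $\tilde x_n\to0$ or $\tilde x_n\to\infty$) this yields $I_R(t)\le C(T-t)^2$ uniformly in $R$. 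That bound is simultaneously the finite variance you need and the quantitative decay $\|xu(t)\|_{L^2}^2+2\|xv(t)\|_{L^2}^2\le C(T-t)^2$ which, via the virial identity $\|xu(t)\|_{L^2}^2+2\|xv(t)\|_{L^2}^2=8t^2E(e^{i|x|^2/4t}u_0,e^{i|x|^2/2t}v_0)$, forces $E(e^{i|x|^2/4T}u_0,e^{i|x|^2/2T}v_0)=0$ and lets the zero-energy rigidity lemma conclude. Without this inequality your plan does not close; I would also drop the claim that the modulation parameters can be chosen continuously in $t$, which is an unnecessary (and unproved) strengthening that the sequential argument avoids.
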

	Note that if we take $\theta_1 =-\frac{1}{T}$, $\theta_2 = -\frac{2}{T}$ and $\rho=1$, then \eqref{sol-cla} becomes \eqref{pse-con-tra}.
	
	The proof of Theorem $\ref{theo-cla-min-mas}$ is inspired by an argument of \cite{HK} using the profile decomposition (see also \cite{CG} for a different approach).
	
	This paper is organized as follows. In Section $\ref{S:1}$, we study the existence and stability of standing waves. More precisely, we study the variational problem $I(a,b)$ in Subsection $\ref{Sub:1.1}$ where Theorem $\ref{theo-exi-min}$ is proved. The variational problem $J(c)$ is studied in Subsection $\ref{Sub:1.2}$, and Theorem $\ref{theo-exi-min-J}$ and Theorem $\ref{theo-sta}$ are proved in this subsection. Section $\ref{S:2}$ is devoted to the existence of blow-up solutions and the characterization of finite time blow-up solutions with minimal mass. The existence of blow-up solutions given in Theorem $\ref{theo-blo-non-rad}$ and Theorem $\ref{theo-blo-rad}$ are considered in Subsection $\ref{Sub:2.1}$. The characterization of finite time blow-up solutions with minimal mass given in Theorem $\ref{theo-cla-min-mas}$ is showed in Subsection $\ref{Sub:2.2}$. 
	
	\section{Existence and stability of standing waves}
	\label{S:1}
	Throughout this section, we use the following notations:
	\[
	K(u,v) := \|\nabla u\|^2_{L^2} + \kappa \|\nabla v\|^2_{L^2}, \quad M(u,v) = \|u\|^2_{L^2} + 2 \|v\|^2_{L^2}, \quad P(u,v) = \text{Re} (\langle v, u^2 \rangle ).
	\]
	\subsection{The variational problem $I(a,b)$}
	\label{Sub:1.1}
	Let $d \leq 3$ and $a,b>0$. We consider the variational problem \eqref{prob-1}.
	\begin{lemma} \label{lem-neg}
		Let $d\leq 3$ and $a,b>0$. Then $-\infty<I(a,b) <0$.
	\end{lemma}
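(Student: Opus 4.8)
The plan is to prove the two bounds by independent arguments. The lower bound $I(a,b) > -\infty$ follows from the very same Gagliardo--Nirenberg estimate used to establish global well-posedness in the text. On the constraint set $\|u\|^2_{L^2} = a$, $\|v\|^2_{L^2} = b$ the total mass $M(u,v) = a + 2b$ is fixed, so \eqref{GN-app} furnishes a bound of the form $|P(u,v)| \le A\,[K(u,v)]^{d/4}$ with a constant $A = A(a,b,d,\kappa)$. Since $d \le 3$ forces the exponent $\tfrac{d}{4} < 1$, Young's inequality gives, for every $\epsilon > 0$,
\[
|P(u,v)| \le A\,[K(u,v)]^{d/4} \le \epsilon\, K(u,v) + C(\epsilon, A).
\]
Hence $E(u,v) = \tfrac12 K(u,v) - P(u,v) \ge \bigl(\tfrac12 - \epsilon\bigr) K(u,v) - C(\epsilon, A)$, and choosing $\epsilon < \tfrac12$ yields $E(u,v) \ge -C(\epsilon, A)$ uniformly on the constraint set, so $I(a,b) \ge -C(\epsilon,A) > -\infty$.

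For the upper bound $I(a,b) < 0$, the idea is to exhibit one competitor and then lower its energy by an $L^2$-preserving scaling. First I would fix a nontrivial pair with the prescribed masses and \emph{strictly positive} $P$: concretely take $u = \alpha\, e^{-|x|^2}$ and $v = \gamma\, e^{-|x|^2}$ with $\alpha, \gamma > 0$ tuned so that $\|u\|^2_{L^2} = a$ and $\|v\|^2_{L^2} = b$, for which $P(u,v) = \int v\, u^2\,dx > 0$. Next, set $u_\beta(x) = \beta^{d/2} u(\beta x)$ and $v_\beta(x) = \beta^{d/2} v(\beta x)$ for $\beta > 0$. A change of variables shows the constraints are preserved, $\|u_\beta\|^2_{L^2} = a$ and $\|v_\beta\|^2_{L^2} = b$, while
\[
K(u_\beta, v_\beta) = \beta^2\, K(u,v), \qquad P(u_\beta, v_\beta) = \beta^{d/2}\, P(u,v).
\]
Consequently,
\[
E(u_\beta, v_\beta) = \tfrac12 \beta^2 K(u,v) - \beta^{d/2} P(u,v) = \beta^{d/2}\Bigl( \tfrac12\, \beta^{\,2 - d/2}\, K(u,v) - P(u,v) \Bigr).
\]
Since $d \le 3 < 4$, the exponent $2 - \tfrac{d}{2}$ is strictly positive, so $\beta^{\,2 - d/2} \to 0$ as $\beta \to 0^+$ and the bracket tends to $-P(u,v) < 0$. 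Thus $E(u_\beta, v_\beta) < 0$ for all sufficiently small $\beta > 0$, and as $(u_\beta, v_\beta)$ is admissible we conclude $I(a,b) \le E(u_\beta, v_\beta) < 0$.

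Both steps are elementary; the only quantitative point to watch is that the subcritical range $d \le 3$ enters twice, once as $\tfrac{d}{4} < 1$ (to absorb $P$ into $K$ in the lower bound via Young's inequality) and once as $2 - \tfrac{d}{2} > 0$ (to make the scaling strictly decrease the energy). The mildest subtlety is guaranteeing a competitor with $P(u,v) > 0$, which the explicit positive Gaussians settle at once, so I do not anticipate a genuine obstacle here.
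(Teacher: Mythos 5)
Your proposal is correct and follows essentially the same route as the paper: the lower bound via the Gagliardo--Nirenberg estimate \eqref{GN-app} giving $|P|\lesssim [K]^{d/4}$ with $d/4<1$, and the upper bound via the $L^2$-preserving scaling $u_\beta(x)=\beta^{d/2}u(\beta x)$ applied to a positive competitor, using $d/2<2$. The only cosmetic differences are that you make the Young-inequality absorption explicit and pick concrete Gaussians where the paper simply takes any positive pair with the prescribed masses.
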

	\begin{proof}
		Let $(u,v) \in H^1 \times H^1$ be such that $\|u\|^2_{L^2} = a$ and $\|v\|^2_{L^2}=b$. Thanks to the Gagliardo-Nirenberg inequality \eqref{GN} and estimating as in \eqref{GN-app}, we have
		\[
		|P(u,v)| \leq  C(\kappa, a,b) [K(u,v)]^{\frac{d}{4}}. 
		\]
		where $C(\kappa, a,b)$ depends only on $\kappa, a,b$ and not on $u$ and $v$. Thus
		\[
		E(u,v) = \frac{1}{2} K(u,v) - P(u,v) \geq \frac{1}{2} K(u,v) - C(\kappa, a,b) [K(u,v)]^{\frac{d}{4}}.
		\]
		Since $d\leq 3$, it follows that $E(u,v) >-\infty$. To see $I(a,b)<0$, let us choose $(u,v) \in H^1 \times H^1$ such that $\|u\|^2_{L^2}=a$, $\|v\|^2_{L^2}=b$ and $u(x), v(x)>0$ for all $x \in \mathbb{R}^d$. For $\gamma>0$, set $u_\gamma(x) = \gamma^{\frac{d}{2}} u(\gamma x)$ and $v_\gamma(x)= \gamma^{\frac{d}{2}} v(\gamma x)$. It is easy to see that for all $\gamma>0$, $\|u_\gamma\|^2_{L^2} = \|u\|^2_{L^2}=a$ and $\|v_\gamma\|^2_{L^2} = \|v\|^2_{L^2}=b$. Moreover,
		\[
		E(u_\gamma, v_\gamma) = \frac{1}{2} K(u_\gamma, v_\gamma) - P(u_\gamma,v_\gamma) = \frac{\gamma^2}{2} K(u,v) - \gamma^{\frac{d}{2}}  P(u,v).
		\]
		Since $d\leq 3$ and $P(u,v)>0$, by taking $\gamma$ sufficiently small, we obtain $E(u_\gamma, v_\gamma)<0$. The proof is complete.
	\end{proof}
	
	\begin{lemma} \label{lem-pro-min}
		Let $d\leq 3$ and $a,b>0$. Let $(u_n,v_n)_{n\geq 1}$ be a minimizing sequence for $I(a,b)$. Then there exist $C>0$, $\rho>0$ and $\eta>0$ such that for sufficiently large $n$:
		\begin{itemize}
			\item[(1)] $K(u_n,v_n) \leq C$;
			\item[(2)] $\|\nabla u_n\|_{L^2} \geq \rho$ and $\|\nabla v_n\|_{L^2} \geq \eta$.
		\end{itemize}
	\end{lemma}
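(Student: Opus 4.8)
The plan is to exploit two ingredients already in hand: the Gagliardo--Nirenberg estimate $|P(u,v)| \leq C(\kappa,a,b)\,[K(u,v)]^{d/4}$ from the proof of Lemma \ref{lem-neg}, together with its unfolded form \eqref{GN-app}, and the strict inequality $I(a,b)<0$ from Lemma \ref{lem-neg}. Since $(u_n,v_n)$ is a minimizing sequence, for $n$ large we have $\|u_n\|_{L^2}^2 \leq 2a$, $\|v_n\|_{L^2}^2 \leq 2b$ and $E(u_n,v_n) \leq I(a,b)+1$, so the Gagliardo--Nirenberg constants may be taken uniform in $n$.

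For part (1), I would write $\tfrac{1}{2} K(u_n,v_n) = E(u_n,v_n) + P(u_n,v_n)$ and bound the right-hand side by $(I(a,b)+1) + C[K(u_n,v_n)]^{d/4}$. Because $d\leq 3$ gives $\tfrac{d}{4}<1$, the term $C[K]^{d/4}$ is sublinear in $K$, so if $K(u_n,v_n)$ were unbounded along a subsequence the linear left-hand side would eventually dominate, a contradiction. Hence $K(u_n,v_n)\leq C$ for all large $n$, which in particular yields uniform upper bounds $\|\nabla u_n\|_{L^2}\leq C_2$ and $\|\nabla v_n\|_{L^2}\leq C_1$.

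For part (2), I would first use $E(u_n,v_n)\to I(a,b)<0$ together with $K\geq 0$ to deduce $P(u_n,v_n) = \tfrac{1}{2}K(u_n,v_n) - E(u_n,v_n) \geq -E(u_n,v_n) \geq \delta$ for some $\delta>0$ and all large $n$. Then the unfolded estimate \eqref{GN-app}, after absorbing the (bounded) $L^2$-norms into a constant, gives $P(u_n,v_n) \leq C''\,\|\nabla u_n\|_{L^2}^{d/3}\,\|\nabla v_n\|_{L^2}^{d/6}$. Combining the lower bound $P\geq\delta$ with the upper bound $\|\nabla v_n\|_{L^2}\leq C_1$ from part (1) forces $\|\nabla u_n\|_{L^2}\geq\rho>0$; symmetrically, using $\|\nabla u_n\|_{L^2}\leq C_2$ forces $\|\nabla v_n\|_{L^2}\geq\eta>0$.

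The Gagliardo--Nirenberg bookkeeping is routine; the point that must be handled with care is that the lower bounds in part (2) genuinely require both ingredients at once, namely the strict negativity of $I(a,b)$ to keep $P$ bounded away from zero, and the upper bounds from part (1) to prevent either gradient factor from carrying all of $P$. Neither gradient can tend to zero, since $P$ is a single product in which the complementary factor is already controlled from above.
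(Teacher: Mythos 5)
Your proposal is correct. Part (1) is exactly the paper's argument: write $\tfrac12 K = E+P$, bound $P$ by $C[K]^{d/4}$ with constants uniform along the minimizing sequence, and use $d/4<1$ to absorb. For part (2) you take a mildly different route from the paper: the paper argues by contradiction, assuming (say) $\|\nabla u_n\|_{L^2}\to 0$ along a subsequence, deducing $P(u_n,v_n)\to 0$ from the H\"older/Sobolev bound $|P|\lesssim \|u_n\|_{L^4}^2\|v_n\|_{L^2}$, and concluding $I(a,b)=\lim E(u_n,v_n)=\lim\frac{\kappa}{2}\|\nabla v_n\|_{L^2}^2\ge 0$, contradicting Lemma \ref{lem-neg}; you instead extract the quantitative lower bound $P(u_n,v_n)=\tfrac12 K(u_n,v_n)-E(u_n,v_n)\ge -E(u_n,v_n)\ge\delta>0$ for large $n$ and then use the symmetric product form of \eqref{GN-app}, $P\le C''\|\nabla u_n\|_{L^2}^{d/3}\|\nabla v_n\|_{L^2}^{d/6}$, together with the upper bounds from part (1), to force both gradients away from zero in one stroke. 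The two arguments rest on the same two ingredients ($I(a,b)<0$ and Gagliardo--Nirenberg interpolation); yours is the direct, contrapositive-free version and treats $u_n$ and $v_n$ simultaneously, while the paper's handles each component by a separate contradiction. Both are complete and correct.
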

	\begin{proof}
	By \eqref{GN-app}, we have
	\begin{align*}
	\frac{1}{2} K(u_n,v_n) &= E(u_n,v_n) + P(u_n,v_n) \\
	& \leq E(u_n,v_n) + C(\kappa) [M(u_n,v_n)]^{\frac{3}{2}-\frac{d}{4}} [K(u_n,v_n)]^{\frac{d}{4}}.
	\end{align*}
	Since $\|u_n\|^2_{L^2} \rightarrow a, \|v_n\|^2_{L^2} \rightarrow b$ and $E(u_n, v_n) \rightarrow I(a,b)$ as $n\rightarrow \infty$, it follows that for sufficiently large $n$, $M(u_n,v_n) \leq C$ and $E(u_n, v_n) \leq C$ for some constant $C>0$. Note that the constant $C>0$ may change from line to line. Thus
	\[
	\frac{1}{2} K(u_n,v_n) \leq C + C(\kappa) [K(u_n,v_n)]^{\frac{d}{4}}.
	\]
	Since $d\leq 3$, we get $K(u_n,v_n) \leq C$ for some $C>0$ and sufficiently large $n$. This proves the first item. Note that this item implies that every minimizing sequence for $I(a,b)$ is bounded in $H^1 \times H^1$.
	
	Let us show $\|\nabla u_n\|_{L^2} \geq \rho$. Assume by contradiction that up to a subsequence, $\lim_{n\rightarrow \infty} \|\nabla u_n\|_{L^2} =0$. 
	By the H\"older inequality and Sobolev embedding, we have 
	\[
	|P(u_n,v_n)| \leq C \|u_n\|^2_{L^4} \|v_n\|_{L^2} \leq C(b) \|\nabla u_n\|_{L^2}^2 \rightarrow 0 \text{ as } n\rightarrow \infty.
	\]
	In particular, 
	\[
	I(a,b) = \lim_{n\rightarrow \infty} E(u_n,v_n) = \lim_{n\rightarrow \infty} \frac{\kappa}{2} \|\nabla v_n\|^2_{L^2} \geq 0,
	\]
	which is a contradiction to Lemma $\ref{lem-neg}$. It follows that there exists $\rho>0$ such that $\|\nabla u_n\|_{L^2} \geq \rho$. The proof of $\|\nabla v_n\|_{L^2} \geq \eta$ is similar, we omit the details.
	\end{proof}
	
	Now to each minimizing sequence ${(u_n,v_n)}_{n\geq 1}$ for $I(a,b)$, we associate with the following sequence of nondecreasing functions (L\'evy concentration functions) $M_n: [0,\infty) \rightarrow [0, a+b]$ defined by
	\[
	M_n(R) := \sup_{y \in \mathbb{R}^d} \int_{B(y,R)} |u_n(x)|^2 + |v_n(x)|^2 dx.
	\]
	Since $\|u_n\|^2_{L^2} \rightarrow a$ and $\|v_n\|^2_{L^2} \rightarrow b$ as $n\rightarrow \infty$, then $(M_n)_{n\geq 1}$ is a uniformly bounded sequence of nondecreasing functions on $[0,\infty)$. By Helly's selection theorem, we see that $(M_n)_{n\geq 1}$ must have a subsequence, still denoted by $(M_n)_{n\geq 1}$, that converges pointwise and uniformly on compact sets to a nonnegative nondecreasing function $M: [0,\infty) \rightarrow [0, a+b]$. Let
	\begin{align}\label{def-L}
	L:= \lim_{R\rightarrow \infty} M(R) = \lim_{R\rightarrow \infty} \lim_{n\rightarrow \infty} \sup_{y \in \mathbb{R}^d} \int_{B(y,R)} |u_n(x)|^2 + |v_n(x)|^2 dx. 
	\end{align}
	We have $0 \leq L \leq a+b$. By the Lions' concentration compactness lemma (\cite[Lemma I.1]{Lions1}), there are three (mutually exclusive) possibilities for the value of $L$:
	\begin{itemize}
		\item[(1)] {\bf (Vanishing)} $L=0$. Since $M(R)$ is non-negative and nondecreasing, it follows that
		\[
		M(R) = \lim_{n\rightarrow \infty} \sup_{y \in \mathbb{R}^d} \int_{B(y,R)} |u_n(x)|^2 + |v_n(x)|^2 dx =0,
		\]
		for every $R \in [0,\infty)$;
		\item[(2)] {\bf (Dichotomy)} $L \in (0, a+b)$;
		\item[(3)] {\bf (Compactness)} $L= a+b$. In this case, there exists a sequence $(y_n)_{n\geq 1} \subset \mathbb{R}^d$ such that $|u_n(\cdot+y_n)|^2+ |v_n(\cdot+y_n)|^2$ is tight, that is, for all $\epsilon >0$, there exists $R(\epsilon)>0$ such that 
		\[
		\int_{B(y_n,R(\epsilon))} |u_n(x)|^2 + |v_n(x)|^2 dx \geq (a+b) -\epsilon,
		\]
		for sufficiently large $n$.
	\end{itemize}
	
	Let us start by rulling out the ``vanishing" possibility. To do so, we recall the following well-known result (see \cite[Lemma I.1]{Lions2}).
	\begin{lemma} \label{lem-lions}
		Let $2<q<2^*$, where $2^*=\infty$ for $d=1,2$ and $2^*=\frac{2d}{d-2}$ for $d\geq 3$. Assume that $(f_n)_{n\geq 1}$ is a bounded sequence in $H^1(\mathbb{R}^d)$ and satisfies
		\[
		\sup_{y\in \mathbb{R}^d} \int_{B(y,R)} |f_n(x)|^2 dx \rightarrow 0 \text{ as } n \rightarrow \infty,
		\]
		for some $R>0$. Then $f_n \rightarrow 0$ in $L^q$.
	\end{lemma}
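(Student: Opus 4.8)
The plan is to pass from the local $L^2$-smallness hypothesis to global $L^q$-decay by means of a covering argument with bounded overlap. First I would fix a covering $\mathbb{R}^d = \bigcup_j B(y_j,R)$ by balls of the given radius $R$, arranged (e.g. with centres on a suitably scaled lattice) so that every point of $\mathbb{R}^d$ lies in at most $N = N(d)$ of the balls. Writing $B_j := B(y_j,R)$, the bounded overlap yields, for any $g \in H^1(\mathbb{R}^d)$, the two summability inputs
\[
\sum_j \|g\|_{L^2(B_j)}^2 \le N \|g\|_{L^2}^2, \qquad \sum_j \|g\|_{H^1(B_j)}^2 \le N \|g\|_{H^1}^2 .
\]

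The second ingredient is a local interpolation--Sobolev inequality on a single ball. Since the Sobolev constant on $B(y,R)$ is independent of $y$ by translation invariance, for $d \ge 3$ one has $\|g\|_{L^{2^*}(B_j)} \le C \|g\|_{H^1(B_j)}$, and interpolating between $L^2$ and $L^{2^*}$ gives, for the fixed exponent $q \in (2,2^*)$,
\[
\|g\|_{L^q(B_j)}^q \le C \, a_j^{\alpha} \, b_j^{\beta}, \qquad a_j := \|g\|_{L^2(B_j)}, \quad b_j := \|g\|_{H^1(B_j)},
\]
with $\alpha := q(1-\lambda) > 0$, $\beta := q\lambda = \tfrac{d(q-2)}{2} > 0$, $\alpha+\beta = q$, and $\tfrac1q = \tfrac{1-\lambda}{2} + \tfrac{\lambda}{2^*}$ (for $d=1,2$ one replaces $2^*$ by any finite $q_0 > q$, for which the local Sobolev embedding still holds). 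Summing over $j$ and using $\int_{\mathbb{R}^d} |f_n|^q \le \sum_j \int_{B_j} |f_n|^q$ reduces the whole statement to showing $\sum_j a_j^{\alpha} b_j^{\beta} \to 0$, where now $a_j, b_j$ refer to $f_n$. By hypothesis $\sup_j a_j = \epsilon_n := \sup_y \big(\int_{B(y,R)} |f_n|^2\big)^{1/2} \to 0$, while $\sum_j a_j^2$ and $\sum_j b_j^2$ remain bounded because $(f_n)_{n\ge1}$ is bounded in $H^1$.

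The delicate step, and the one I expect to be the main obstacle, is to extract a positive power of $\epsilon_n$ from $\sum_j a_j^{\alpha} b_j^{\beta}$ in a way that is uniform over the whole range of $q$: the exponent $\beta = \tfrac{d(q-2)}{2}$ may be larger or smaller than $2$ according as $q \gtrless 2 + \tfrac4d$, so no single estimate works across the entire interval. I would therefore split into two cases. When $\beta \ge 2$, I bound $b_j^{\beta} = b_j^{\beta-2} b_j^2 \le \|f_n\|_{H^1}^{\beta-2} b_j^2$ and $a_j^{\alpha} \le \epsilon_n^{\alpha}$, so that $\sum_j a_j^{\alpha} b_j^{\beta} \le C \epsilon_n^{\alpha} \sum_j b_j^2 \to 0$. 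When $\beta < 2$, I apply discrete H\"older with the conjugate pair $\big(\tfrac2\beta, \tfrac{2}{2-\beta}\big)$ to obtain $\sum_j a_j^{\alpha} b_j^{\beta} \le \big(\sum_j b_j^2\big)^{\beta/2} \big(\sum_j a_j^{2\alpha/(2-\beta)}\big)^{(2-\beta)/2}$; since $q > 2$ forces $\tfrac{2\alpha}{2-\beta} > 2$, I may peel off $a_j^{2\alpha/(2-\beta)-2} \le \epsilon_n^{2\alpha/(2-\beta)-2}$ and use $\sum_j a_j^2 \le C$, which again produces a positive power of $\epsilon_n$ (in fact $\epsilon_n^{q-2}$). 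In either case $\|f_n\|_{L^q}^q \to 0$, which is the claim. Everything apart from this case distinction — the bounded-overlap covering, the translation-invariant local Sobolev inequality, and the uniform $H^1$-bounds — is routine.
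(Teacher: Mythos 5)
Your argument is correct. Note, however, that the paper does not prove this lemma at all: it is quoted verbatim from Lions (\cite[Lemma I.1]{Lions2}) and used as a black box, so there is no in-paper proof to compare against. What you have written is a complete, self-contained version of the classical covering argument: bounded-overlap covering by balls of radius $R$, the translation-invariant local interpolation--Sobolev bound $\|g\|_{L^q(B_j)}^q \le C\,a_j^{\alpha} b_j^{\beta}$ with $\alpha+\beta=q$, and the extraction of the factor $\epsilon_n^{q-2}$ (resp. $\epsilon_n^{\alpha}$) from the sum. Both branches of your case split check out: for $\beta\ge 2$ the bound $b_j^{\beta-2}\le \|f_n\|_{H^1}^{\beta-2}$ together with $\sum_j b_j^2\le N\|f_n\|_{H^1}^2$ gives $\epsilon_n^{\alpha}$ with $\alpha=q-\beta>0$ precisely because $q<2^*$; for $\beta<2$ the H\"older exponents $\bigl(\tfrac{2}{\beta},\tfrac{2}{2-\beta}\bigr)$ are admissible and $\tfrac{2\alpha}{2-\beta}>2$ is exactly the condition $q>2$, yielding the power $\epsilon_n^{q-2}$. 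The only respect in which your route differs from the textbook treatment (e.g. Lions' original argument, or Willem's Lemma 1.21) is that the standard proof handles only the distinguished exponent $q=2+\tfrac{4}{d}$, for which $\beta=2$ and no case distinction is needed, and then obtains the full range $2<q<2^*$ by interpolating in Lebesgue spaces against the bounded $L^2$ and $L^{2^*}$ norms; your direct treatment of every $q$ trades that final interpolation for the two-case estimate on the discrete sum. Either way the lemma is established, and no gap remains.
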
 
	\begin{lemma} \label{lem-rul-van}
		Let $d\leq 3$ and $a, b>0$. Then for every minimizing sequence $(u_n,v_n)_{n\geq 1}$ for $I(a,b)$, it holds that $L>0$. 
	\end{lemma}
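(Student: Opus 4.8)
The plan is to rule out vanishing by contradiction: I will turn the hypothesis $L=0$ into the statement that the coupling term $P(u_n,v_n)$ disappears in the limit, which forces the energy to be nonnegative and thereby contradicts $I(a,b)<0$ established in Lemma~\ref{lem-neg}.

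First I would assume, for contradiction, that $L=0$. By the definition of $L$ and the \textbf{(Vanishing)} alternative, this means that for every fixed $R>0$,
\[
\lim_{n\to\infty} \sup_{y\in\mathbb{R}^d} \int_{B(y,R)} |u_n(x)|^2 + |v_n(x)|^2 \, dx = 0.
\]
Since the integrand is a sum of two nonnegative terms, this immediately yields the two separate localization bounds $\sup_{y}\int_{B(y,R)}|u_n|^2\,dx \to 0$ and $\sup_{y}\int_{B(y,R)}|v_n|^2\,dx \to 0$ as $n\to\infty$. By Lemma~\ref{lem-pro-min}, the sequences $(u_n)_{n\geq 1}$ and $(v_n)_{n\geq 1}$ are bounded in $H^1$, so Lemma~\ref{lem-lions} applies to each of them. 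The key observation is that the exponent $q=3$ is strictly $H^1$-subcritical in every dimension $d\leq 3$: indeed $2<3<2^*$, where $2^*=\infty$ for $d=1,2$ and $2^*=6$ for $d=3$. Hence Lemma~\ref{lem-lions} gives $u_n\to 0$ and $v_n\to 0$ in $L^3$.

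Next I would estimate the coupling term using H\"older's inequality with exponents $(3,3,3)$,
\[
|P(u_n,v_n)| = |\text{Re}\langle v_n, u_n^2\rangle| \leq \|v_n\|_{L^3}\|u_n\|^2_{L^3} \to 0 \quad \text{as } n\to\infty.
\]
Consequently, using $K(u_n,v_n)\geq 0$,
\[
I(a,b) = \lim_{n\to\infty} E(u_n,v_n) = \lim_{n\to\infty}\left[\frac{1}{2}K(u_n,v_n) - P(u_n,v_n)\right] \geq \liminf_{n\to\infty}\left(-P(u_n,v_n)\right) = 0.
\]
This contradicts Lemma~\ref{lem-neg}, which asserts $I(a,b)<0$. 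Therefore $L>0$.

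I do not expect a serious obstacle here: the entire content of the argument is the strict subcriticality of the exponent $3$ for $d\leq 3$, which is precisely what makes Lions' lemma applicable to the $L^3$ norm and forces the nonlinear interaction to vanish under the vanishing hypothesis. The only point requiring minor care is the reduction from the joint concentration function controlling $|u_n|^2+|v_n|^2$ to the two separate $L^2$-localization bounds needed to invoke Lemma~\ref{lem-lions} componentwise; this is immediate from the nonnegativity of the two summands.
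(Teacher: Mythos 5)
Your proof is correct and follows essentially the same route as the paper: assume $L=0$, apply Lions' lemma (Lemma \ref{lem-lions}) to the $H^1$-bounded components to get $u_n,v_n\to 0$ in $L^3$ (valid since $3<2^*$ for $d\leq 3$), deduce $P(u_n,v_n)\to 0$ by H\"older, and conclude $I(a,b)\geq 0$, contradicting Lemma \ref{lem-neg}. The only cosmetic difference is that the paper passes to a subsequence with the localization holding for a single $R_0$ rather than all $R$, which is immaterial for invoking Lemma \ref{lem-lions}.
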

	\begin{proof}
		By contradiction, we suppose that $L=0$. Then there exist $R_0>0$ and a subsequence of $(u_n,v_n)_{n\geq 1}$, still denoted by $(u_n,v_n)_{n\geq 1}$, such that
		\[
		\sup_{y\in \mathbb{R}^d} \int_{B(y,R_0)} |u_n(x)|^2 + |v_n(x)|^2 dx \rightarrow 0 \text{ as } n\rightarrow \infty.
		\]
		Using the fact that $(u_n,v_n)_{n\geq 1}$ is bounded in $H^1 \times H^1$ (by Item (1) of Lemma $\ref{lem-pro-min})$ and Lemma $\ref{lem-lions}$, we see that $\|u_n\|_{L^q}, \|v_n\|_{L^q} \rightarrow 0$ as $n\rightarrow \infty$ with $2<q<2^*$. In particular, we have
		\[
		|P(u_n,v_n)| \leq \|u_n\|^2_{L^3} \|v_n\|_{L^3} \rightarrow 0 \text{ as } n\rightarrow \infty.
		\]
		Therefore,
		\[
		I(a,b) = \lim_{n\rightarrow \infty} E(u_n,v_n) = \lim_{n\rightarrow \infty} \frac{1}{2} K(u_n,v_n) \geq 0,
		\]
		which contradicts to Lemma $\ref{lem-neg}$. The proof is complete.
	\end{proof}
	
	We next rule out the ``dichotomy" possibility. To do so, we need the following lemmas. 
	\begin{lemma} \label{lem-ine-1}
		For $a,b>0$, it holds that
		\[
		I(a,b) < I(a,0) + I(0,b).
		\]
	\end{lemma}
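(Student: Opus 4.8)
The plan is to reduce this strict subadditivity to the already-established negativity of $I(a,b)$ by first identifying the two degenerate boundary values $I(a,0)$ and $I(0,b)$. The key structural fact is that the quadratic interaction $P$ vanishes whenever one component is zero, so each boundary problem collapses to a purely kinetic minimization whose infimum is $0$.

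First I would compute $I(a,0)$. The constraint $\|v\|^2_{L^2}=0$ forces $v=0$, so $P(u,0)=\text{Re}\langle 0,u^2\rangle=0$ and $E(u,0)=\frac{1}{2}\|\nabla u\|^2_{L^2}\geq 0$; hence $I(a,0)\geq 0$. For the reverse bound I would fix any $u$ with $\|u\|^2_{L^2}=a$ and use the mass-preserving dilation $u_\gamma(x)=\gamma^{d/2}u(\gamma x)$ already employed in the proof of Lemma \ref{lem-neg}: since $\|u_\gamma\|^2_{L^2}=\|u\|^2_{L^2}=a$ while $\|\nabla u_\gamma\|^2_{L^2}=\gamma^2\|\nabla u\|^2_{L^2}\to 0$ as $\gamma\to 0$, one gets $I(a,0)\leq 0$. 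Therefore $I(a,0)=0$, and the identical argument with the roles of $u$ and $v$ interchanged (using the dilation $v_\gamma$ and the factor $\kappa$) gives $I(0,b)=0$.

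It then follows that $I(a,0)+I(0,b)=0$, and since Lemma \ref{lem-neg} already provides $I(a,b)<0$ for all $a,b>0$, we conclude
\[
I(a,b)<0=I(a,0)+I(0,b),
\]
which is the claim. I do not anticipate any genuine obstacle here: the only real content is the strict negativity supplied by Lemma \ref{lem-neg}, which reflects the fact that a true two-component competitor can exploit $P>0$ to drive the energy below zero, whereas any competitor with a vanishing component has $P\equiv 0$ and hence nonnegative energy. The scaling computations are routine, and the strict sign is inherited directly from Lemma \ref{lem-neg}.
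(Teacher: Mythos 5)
Your proposal is correct and follows essentially the same route as the paper: both arguments reduce the claim to showing $I(a,0)+I(0,b)\geq 0$ and then invoke the strict negativity $I(a,b)<0$ from Lemma \ref{lem-neg}. The only difference is cosmetic — you pin down the exact values $I(a,0)=I(0,b)=0$ via the mass-preserving dilation, whereas the paper only verifies nonnegativity along minimizing sequences, which is all that is needed.
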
 
	\begin{proof}
		We first observe that $I(a,0), I(0,b) \geq 0$. Indeed, let $(u^1_n,v_n^1)_{n\geq 1}$ be a minimizing sequence for $I(a,0)$, i.e 
		\[
		\lim_{n\rightarrow \infty} \|u^1_n\|^2_{L^2}=a, \quad \lim_{n\rightarrow \infty} \|v^1_n\|^2_{L^2}=0 \text{ and } \lim_{n\rightarrow \infty} E(u^1_n,v^1_n)= I(a,0).
		\]
		Since $(u^1_n,v^1_n)_{n\geq 1}$ is a bounded sequence in $H^1 \times H^1$, the H\"older inequality and Sobolev embedding imply that
		\[
		|P(u^1_n,v^1_n)| \leq C \|u^1_n\|^2_{L^4} \|v^1_n\|_{L^2} \leq C \|\nabla u^1_n\|^2_{L^2} \|v^1_n\|_{L^2} \rightarrow 0 \text{ as } n\rightarrow \infty.
		\]
		Thus
		\[
		I(a,0) = \lim_{n\rightarrow \infty} E(u^1_n, v^1_n) = \lim_{n\rightarrow \infty} \frac{1}{2} K(u^1_n, v^1_n) \geq 0.
		\]
		The lemma follows immediately since $I(a,b)<0$ by Lemma $\ref{lem-neg}$. 
	\end{proof}
	\begin{lemma} \label{lem-ine-2}
		Let $a_1, b_1, a_2, b_2>0$. Then
		\begin{align*}
		I(a_1+a_2, b_1) &< I(a_1,b_1) + I(a_2, 0), \\
		I(a_2, b_1 +b_2) &< I(0, b_1) + I(a_2, b_2).
		\end{align*}
	\end{lemma}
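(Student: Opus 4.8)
The plan is to exploit two facts. First, since $I(a_2,0)\ge 0$ and $I(0,b_1)\ge 0$ (established in the proof of Lemma~\ref{lem-ine-1}), it suffices to prove the sharper strict monotonicity estimates
\[
I(a_1+a_2,b_1)<I(a_1,b_1), \qquad I(a_2,b_1+b_2)<I(a_2,b_2).
\]
Second, $E(u,v)$ is a quadratic functional separately in $u$ (with $v$ frozen) and in $v$ (with $u$ frozen), so I can add mass to one component \emph{optimally} by solving a linear problem, and the sign of the associated Lagrange multiplier decides whether the energy strictly drops. Throughout I work with a minimizing sequence $(u_n,v_n)$ for the smaller problem, and I may assume $u_n,v_n\ge 0$: replacing $(u_n,v_n)$ by $(|u_n|,|v_n|)$ preserves both masses, does not increase $K$ (diamagnetic inequality), and does not decrease $P$ since $\mathrm{Re}\int v\bar u^2\le\int|v||u|^2$, so $E$ does not increase.

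For the first estimate I freeze $v_n$ and minimize $Q_n(u):=\tfrac12\|\nabla u\|_{L^2}^2-\mathrm{Re}\int v_n\bar u^2=\tfrac12\langle(-\Delta-2v_n)u,u\rangle$ over the sphere $\|u\|_{L^2}^2=\alpha$; since $Q_n$ is homogeneous of degree $2$, this minimum equals $\tfrac{\alpha}{2}\lambda_n$, where $\lambda_n=\inf\mathrm{spec}(-\Delta-2v_n)$ and the minimum is attained at a real $u$. Testing with $u_n$ gives $\lambda_n\le a_1^{-1}(\|\nabla u_n\|_{L^2}^2-2P(u_n,v_n))=a_1^{-1}(2E(u_n,v_n)-\kappa\|\nabla v_n\|_{L^2}^2)$, which is bounded above by a strictly negative constant because $E(u_n,v_n)\to I(a_1,b_1)<0$ (Lemma~\ref{lem-neg}) and $\|\nabla v_n\|_{L^2}\ge\eta>0$ (Lemma~\ref{lem-pro-min}). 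Minimizing over $\|u\|^2=a_1+a_2$ instead of $a_1$ therefore lowers the energy by at least $\tfrac{a_2}{2}|\lambda_n|\ge c>0$, and passing to the limit yields $I(a_1+a_2,b_1)\le I(a_1,b_1)-c$.

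The second estimate is the delicate one, and here the naive amplitude scaling $v\mapsto\sqrt{s}\,v$ followed by spatial dilation fails in dimension $d=3$: because $P$ is only linear in $v$ while the gradient cost is quadratic, the optimal dilation leaves the requirement $s^{2/d}>C_1/K$, which cannot be met for $v$-gradient-dominant profiles when $2/d=2/3<1$. Instead I freeze $u_n=\phi_n$ and minimize $v\mapsto\tfrac{\kappa}{2}\|\nabla v\|_{L^2}^2-\mathrm{Re}\int v\bar\phi_n^2$ over $\|v\|_{L^2}^2=b_1+b_2$; a minimizer $\tilde\psi_n$ exists (the functional is coercive, as $\phi_n^2\in L^2$) and solves $-\kappa\Delta\tilde\psi_n+\omega_n\tilde\psi_n=\phi_n^2$. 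The crucial point is that $\omega_n$ stays positive uniformly in $n$: by Plancherel,
\[
b_1+b_2=\|\tilde\psi_n\|_{L^2}^2=\int_{\mathbb{R}^d}\frac{|\widehat{\phi_n^2}(\xi)|^2}{(\kappa|\xi|^2+\omega_n)^2}\,d\xi,
\]
and because $|\widehat{\phi_n^2}(0)|=\int\phi_n^2\,dx=a_2$ (here the reduction to $\phi_n\ge 0$ is essential) the integral diverges as $\omega_n\downarrow 0$ for every $d\le 3$; quantitatively, in $d=3$ it is $\gtrsim a_2^2\,\omega_n^{-1/2}$, forcing $\omega_n\ge\delta_0:=c\,a_2^4/(b_1+b_2)^2>0$. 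Using the envelope identity $\tfrac{d}{d\beta}\min_{\|v\|^2=\beta}E(\phi_n,v)=-\tfrac12\omega_n(\beta)$ and the uniform bound $\omega_n(\beta)\ge\delta_0$ on $[b_2,b_1+b_2]$, raising the $v$-mass from $b_2$ to $b_1+b_2$ decreases the energy by at least $\tfrac12 b_1\delta_0$; combining with $\min_{\|v\|^2=b_2}E(\phi_n,v)\le E(\phi_n,v_n)\to I(a_2,b_2)$ gives $I(a_2,b_1+b_2)\le I(a_2,b_2)-\tfrac12 b_1\delta_0$.

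The main obstacle is precisely the positivity of the multiplier $\omega_n$ in the last step: it is what separates a genuine strict energy decrease from the failure of the naive scaling in $d=3$, and it rests on the non-integrability of the massless resolvent $(-\kappa\Delta)^{-1}\phi_n^2$ in low dimensions together with the quantitative lower bound $|\widehat{\phi_n^2}(0)|=a_2$ secured by the reduction to nonnegative $\phi_n$. The points demanding the most care are the differentiability and envelope formula for the value function $\beta\mapsto\min_{\|v\|^2=\beta}E(\phi_n,v)$ and the uniform-in-$n$ lower bound on $\omega_n$.
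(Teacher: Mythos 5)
Your reduction to the strict monotonicity statements $I(a_1+a_2,b_1)<I(a_1,b_1)$ and $I(a_2,b_1+b_2)<I(a_2,b_2)$ via $I(a_2,0),I(0,b_1)\ge 0$ is legitimate, and your proof of the first of these is essentially sound: freezing $v_n$, the infimum of $E(\cdot,v_n)$ over $\{\|u\|_{L^2}^2=\alpha\}$ is $\frac{\kappa}{2}\|\nabla v_n\|_{L^2}^2+\frac{\alpha}{2}\lambda_n$ with $\lambda_n=\inf\mathrm{spec}(-\Delta-2v_n)$, and testing with $u_n$ together with $I(a_1,b_1)<0$ gives $\lambda_n\le -c<0$ uniformly, hence a gain of $\frac{a_2}{2}c$ (modulo the routine renormalization $\|v_n\|_{L^2}^2\to b_1$). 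This is genuinely different from the paper, which instead compares the marginal energies $A_1,A_2$ of the two minimizing sequences and amplitude-rescales whichever $u$-component is cheaper, splitting into the cases $A_2>A_1$, $A_1>A_2$, $A_1=A_2$.

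The second inequality, however, has a real gap exactly at the point you single out: the \emph{uniform} lower bound $\omega_n\ge\delta_0>0$. Your Plancherel identity $b_1+b_2=\int|\widehat{\phi_n^2}(\xi)|^2(\kappa|\xi|^2+\omega_n)^{-2}\,d\xi$ does show $\omega_n>0$ for each fixed $n$ (the massless resolvent of a nonnegative nonzero source is not in $L^2$ for $d\le 3$), but the quantitative bound $\gtrsim a_2^2\,\omega_n^{-1/2}$ in $d=3$ requires $|\widehat{\phi_n^2}(\xi)|\gtrsim a_2$ on a ball $\{|\xi|\le r\}$ with $r$ \emph{independent of} $n$, i.e.\ a uniform modulus of continuity of $\widehat{\phi_n^2}$ at the origin, which is equivalent to uniform tightness of the measures $\phi_n^2\,dx$. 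A general minimizing sequence for $I(a_2,b_2)$ is only bounded in $H^1$ with $\|\phi_n\|_{L^2}^2\to a_2$; if part of its mass spreads to spatial scale $\lambda_n\to\infty$ (say $\phi_n^2=\lambda_n^{-d}g(\cdot/\lambda_n)$ on that part), then $\widehat{\phi_n^2}$ collapses onto $\{|\xi|\lesssim\lambda_n^{-1}\}$ there and a direct computation gives $\omega_n\sim\lambda_n^{-d/2}\to 0$, so your claimed $\delta_0=c\,a_2^4/(b_1+b_2)^2$ fails. Ruling this out is precisely the compactness the concentration-compactness scheme is trying to establish, so it cannot be assumed here; non-vanishing of $\phi_n$ (which does follow from $I(a_2,b_2)<0$) only produces one bubble of mass $\epsilon_0$ and does not prevent destructive interference from the spread-out remainder in $\widehat{\phi_n^2}$. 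Separately, the envelope identity $\frac{d}{d\beta}\min_{\|v\|^2=\beta}E(\phi_n,v)=-\frac{1}{2}\omega_n(\beta)$ is asserted without proof of differentiability of the value function; this is repairable (one-sided derivatives and concavity-type arguments, or a direct comparison between the minimizers at levels $b_2$ and $b_1+b_2$), but as written the second half of the argument does not close. The paper avoids the issue entirely by never freezing $u$ and optimizing in $v$: it transfers the $v$-mass by amplitude rescaling between the two given sequences and extracts strictness from the sign of $A_1-A_2$ or, when $A_1=A_2$, from a positive lower bound on the cross term.
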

	\begin{proof}
		We only give the proof for the first item, the second one is similar. Let $(u^1_n, v^1_n)_{n\geq 1}$ and $(u^2_n, v^2_n)_{n\geq 1}$ be minimizing sequences for $I(a_1,b_1)$ and $I(a_2,0)$, i.e. 
		\begin{align*}
		\lim_{n\rightarrow \infty} \|u^1_n\|^2_{L^2} &=a_1, &\quad \lim_{n\rightarrow \infty} \|v^1_n\|^2_{L^2} &= b_1, &\quad \lim_{n\rightarrow \infty} E(u^1_n,v^1_n) &= I(a_1,b_1), \\
		\lim_{n\rightarrow \infty} \|u^2_n\|^2_{L^2} &=a_2, &\quad \lim_{n\rightarrow \infty} \|v^2_n\|^2_{L^2} &= 0, &\quad \lim_{n\rightarrow \infty} E(u^2_n,v^2_n) &= I(a_2,0).
		\end{align*}
		We look for a sequence of functions $(u_n,v_n)_{n\geq 1}$ in $H^1 \times H^1$ such that $\lim_{n\rightarrow \infty} \|u_n\|^2_{L^2}=a_1+a_2, \lim_{n\rightarrow \infty} \|v_n\|^2_{L^2} = b_1$ and $\lim_{n\rightarrow \infty} E(u_n,v_n) = I(a_1+a_2, b_1)$ such that $I(a_1+a_2,b_1) <I(a_1,b_1) + I(a_2,0)$. 
		
		To do so, without loss of generality, we may assume that $u^1_n, u^2_n$ and $v^1_n, v^2_n$ are nonnegative. Since $(u^1_n, v^1_n)_{n\geq 1}$ and $(u^2_n, v^2_n)_{n\geq 1}$ are bounded in $H^1 \times H^1$, up to subsequence, we consider the values 
		\begin{align*}
		A_1 &= \frac{1}{a_2} \lim_{n\rightarrow \infty} \left( \frac{1}{2} \|\nabla u^2_n\|^2_{L^2} - \int v^1_n (u^2_n)^2 dx \right), \\
		A_2 &= \frac{1}{a_1} \lim_{n\rightarrow \infty} \left( \frac{1}{2} \|\nabla u^1_n\|^2_{L^2} - \int v^1_n (u^1_n)^2dx\right).
		\end{align*}
		Assume first $A_2>A_1$. Set $\alpha_{11}=1 +\frac{a_1}{a_2}$. It follows that $\|\sqrt{\alpha_{11}} u^2_n\|^2_{L^2}=a_1+a_2, \|v^1\|^2_{L^2} = b_1$. By the non-negativity of $u^1_n, u^2_n$ and $v^1_n, v^2_n$, we have
		\begin{align} \label{cas-1}
		I(a_1+a_2,b_1) &\leq E(\sqrt{\alpha_{11}} u^2_n, v^1_n) \nonumber\\
		&= \frac{1}{2} K(\sqrt{\alpha_{11}} u^2_n, v^1_n) - \int v^1_n (\sqrt{\alpha_{11}} u^2_n)^2 dx \nonumber\\
		&= \frac{\alpha_{11}}{2} \|\nabla u^2_n\|^2_{L^2} + \frac{\kappa}{2} \|\nabla v^1_n\|^2_{L^2} - \alpha_{11} \int v_n^1 (u^2_n)^2 dx \nonumber \\
		&= \frac{1}{2} \|\nabla u^2_n\|^2_{L^2}  + \frac{\kappa}{2} \|\nabla v^1_n\|^2_{L^2} - \int v_n^1 (u^2_n)^2 dx + \frac{a_1}{a_2} \left( \frac{1}{2} \|\nabla u^2_n\|^2_{L^2} - \int v^1_n (u^2_n)^2 dx \right) \\
		&= \frac{1}{2} K(u^2_n, v^2_n) + \frac{\kappa}{2} \|\nabla v^1_n\|^2_{L^2} + \frac{a_1}{a_2} \left( \frac{1}{2} \|\nabla u^2_n\|^2_{L^2} - \int v^1_n (u^2_n)^2 dx \right) \nonumber \\
		& \mathrel{\phantom{= \frac{1}{2} K(u^2_n, v^2_n) + \frac{\kappa}{2} \|\nabla v^1_n\|^2_{L^2}}} - \int v_n^1 (u^2_n)^2 dx -\frac{\kappa}{2} \|\nabla v^2_n\|^2_{L^2} \nonumber \\
		&\leq \frac{1}{2} K(u^2_n, v^2_n) + \frac{\kappa}{2} \|\nabla v^1_n\|^2_{L^2} + \frac{a_1}{a_2} \left( \frac{1}{2} \|\nabla u^2_n\|^2_{L^2} - \int v^1_n (u^2_n)^2 dx \right). \nonumber
		\end{align}
		Set $\delta = a_1(A_2-A_1)>0$. Passing the limit as $n\rightarrow \infty$ in the above inequality and note that $I(a_2,0) =\lim_{n\rightarrow \infty} \frac{1}{2} K(u^2_n, v^2_n)$, we get
		\begin{align*}
		I(a_1+a_2,b_1) &\leq I(a_2,0) + \lim_{n\rightarrow \infty} \frac{\kappa}{2} \|\nabla v^1_n\|^2_{L^2} + \frac{a_1}{a_2}(a_2 A_1) \\
		& = I(a_2,0) + \lim_{n\rightarrow \infty} \frac{\kappa}{2} \|\nabla v^1_n\|^2_{L^2} + a_1 A_2 -\delta \\
		&= I(a_2,0) + \lim_{n\rightarrow \infty} E(u^1_n, v^1_n) -\delta \\
		&= I(a_2,0) + I(a_1, b_1) -\delta < I(a_1,b_1) + I(a_2,0).
		\end{align*}
		In the case $A_1 >A_2$. Set $\alpha_{11} = 1+ \frac{a_2}{a_1}$. It follows that
		\begin{align*}
		I(a_1+a_2,b_1) & \leq E(\sqrt{\alpha_{11}} u^1_n, v^1_n) \\
		&= \frac{\alpha_{11}}{2} \|\nabla u^1_n\|^2_{L^2} +\frac{\kappa}{2} \|\nabla v^1_n\|^2_{L^2} - \alpha_{11} \int v^1_n (u^2_n)^2 dx \\
		&= E(u^1_n, v^1_n) +\frac{a_2}{a_1} \left( \frac{1}{2} \|\nabla u^1_n\|^2_{L^2} - \int v^1_n (u^1_n)^2 dx \right) \\
		& \leq E(u^1_n, v^1_n) +\frac{a_2}{a_1} \left( \frac{1}{2} \|\nabla u^1_n\|^2_{L^2} - \int v^1_n (u^1_n)^2 dx \right)  + \frac{\kappa}{2} \|\nabla v^2_n\|^2_{L^2} + \int v^1_n (u^2_n)^2 dx. 
		\end{align*} 
		Set $\delta = a_2 (A_1 -A_2) >0$. Passing the limit as $n\rightarrow \infty$, we obtain
		\begin{align*}
		I(a_1+a_2, b_1) &\leq I(a_1,b_1)  + \frac{a_2}{a_1}(a_1A_2) + \lim_{n\rightarrow \infty} \frac{\kappa}{2} \|\nabla v^2_n\|^2_{L^2} + \lim_{n\rightarrow \infty} \int v^1_n (u^2_n)^2 dx  \\
		&= I(a_1,b_1) + \lim_{n\rightarrow \infty} \frac{1}{2} \|\nabla u^2_n\|^2_{L^2} - \lim_{n\rightarrow \infty} \int v^1_n (u^2_n)^2 dx - \delta \\
		&\mathrel{\phantom{= I(a_1,b_1) + \lim_{n\rightarrow \infty} \frac{1}{2} \|\nabla u^2_n\|^2_{L^2}}} + \lim_{n\rightarrow \infty} \frac{\kappa}{2} \|\nabla v^2_n\|^2_{L^2} + \lim_{n\rightarrow \infty} \int v^1_n (u^2_n)^2 dx\\
		&= I(a_1,b_1) + \lim_{n\rightarrow \infty} \frac{1}{2} K(u^2_n, v^2_n) -\delta \\
		&= I(a_1, b_1) + I(a_2,0)-\delta< I(a_1,b_1) + I(a_2,0).
		\end{align*}
		Finally, we consider the case $A_1=A_2$. Set $\alpha_{11} = 1+ \frac{a_1}{a_2}$. By \eqref{cas-1}, we have
		\begin{align*}
		I(a_1+a_2,b_1) &\leq E(\sqrt{\alpha_{11}} u^2_n, v^1_n) \\
		&= \frac{\alpha_{11}}{2} \|\nabla u^2_n\|^2_{L^2} +\frac{\kappa}{2} \|\nabla v^1_n\|^2_{L^2} - \alpha_{11} \int v^1_n (u^2_n)^2 dx \\
		&= \frac{1}{2} K(u^2_n, v^2_n) + \frac{\kappa}{2} \|\nabla v^1_n\|^2_{L^2} + \frac{a_1}{a_2} \left(\frac{1}{2} \|\nabla u^2_n\|^2_{L^2} - \int v^1_n (u^2_n)^2 dx \right)  \\
		& \mathrel{\phantom{= \frac{1}{2} K(u^2_n, v^2_n) + \frac{\kappa}{2} \|\nabla v^1_n\|^2_{L^2}}} - \int v_n^1 (u^2_n)^2 dx -\frac{\kappa}{2} \|\nabla v^2_n\|^2_{L^2}.
		\end{align*}
		We claim that there exists $\delta>0$ such that for sufficiently large $n$, 
		\[
		\int v_n^1 (u^2_n)^2 dx > \delta.
		\]
		Indeed, suppose that $\lim_{n\rightarrow \infty} \int v_n^1 (u^2_n)^2 dx =0$. Since $v^1_n, u^2_n$ are nonnegative, we have $v^1_n (u^2_n)^2 \rightarrow 0$ amost everywhere in $\mathbb{R}^d$. It contradicts to the fact $\lim_{n\rightarrow \infty} \|v^1_n\|^2_{L^2}=b_1>0$ and $\lim_{n\rightarrow \infty} \|u^2_n\|^2_{L^2}=a_2>0$.
		
		Passing the limit as $n\rightarrow \infty$, we obtain
		\begin{align*}
		I(a_1+a_2,b_1) &\leq I(a_2,0) + \lim_{n\rightarrow \infty} \frac{\kappa}{2} \|\nabla v^1_n\|^2_{L^2} + \frac{a_1}{a_2} (a_2A_1) - \delta \\
		& = I(a_2,0) + \lim_{n\rightarrow \infty} \frac{\kappa}{2} \|\nabla v^1_n\|^2_{L^2} + a_1A_2 -\delta \\
		& = I(a_2,0) + \lim_{n\rightarrow \infty} E(u^1_n, v^1_n) -\delta \\
		&= I(a_2,0) + I(a_1,b_1) -\delta < I(a_1,b_1) + I(a_2,0).
		\end{align*}
		The proof is complete.
	\end{proof}
	
	\begin{lemma} \label{lem-ine-3}
	Let $a_1, b_1, a_2, b_2>0$. Then
	\[
	I(a_1+a_2, b_1 + b_2) < I(a_1, b_1) + I(a_2, b_2).
	\]
	\end{lemma}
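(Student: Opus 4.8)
The plan is to establish this inequality by a direct trial-function construction, in the same spirit as the proof of Lemma~\ref{lem-ine-2}, rather than by merely chaining the one-component inequalities already obtained. Indeed, since $I(a,0)=I(0,b)=0$, Lemmas~\ref{lem-ine-1} and~\ref{lem-ine-2} only express that $I$ is strictly decreasing in each of its two arguments, and monotonicity together with the scaling of $E$ does not by itself force the \emph{joint} strict subadditivity recorded here; the quadratic coupling term $P$ must be exploited. I would therefore fix minimizing sequences $(u^1_n,v^1_n)_n$ and $(u^2_n,v^2_n)_n$ for $I(a_1,b_1)$ and $I(a_2,b_2)$. By Lemma~\ref{lem-pro-min} both are bounded in $H^1\times H^1$ with gradients bounded below, and, replacing each entry by its modulus (which leaves the masses unchanged, does not increase $K$ by the diamagnetic inequality, and does not decrease $P(u,v)=\mathrm{Re}\langle v,u^2\rangle$, hence does not increase $E$), I may assume $u^1_n,v^1_n,u^2_n,v^2_n\ge 0$. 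The aim is to manufacture from these a competitor with masses $(a_1+a_2,b_1+b_2)$ whose energy is bounded above by $I(a_1,b_1)+I(a_2,b_2)-\delta+o(1)$ for some fixed $\delta>0$.

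Following the idea of Lemma~\ref{lem-ine-2}, the competitors I would test are \emph{cross} combinations, kept co-centered so that the profiles overlap: one concentrates all the $u$-mass onto a single $u$-profile and all the $v$-mass onto a single $v$-profile by amplitude rescaling. Concretely, for $\alpha=(a_1+a_2)/a_2$ and $\beta=(b_1+b_2)/b_1$ the pair $(\sqrt{\alpha}\,u^2_n,\sqrt{\beta}\,v^1_n)$ has masses exactly $(a_1+a_2,b_1+b_2)$ and energy $\tfrac12(\alpha\|\nabla u^2_n\|_{L^2}^2+\kappa\beta\|\nabla v^1_n\|_{L^2}^2)-\alpha\sqrt{\beta}\int v^1_n (u^2_n)^2\,dx$. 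Comparing this with $E(u^1_n,v^1_n)+E(u^2_n,v^2_n)$ and expanding in the excess amplitudes leads, exactly as in Lemma~\ref{lem-ine-2}, to marginal-energy quantities of the type $A_1,A_2$, now measuring the cost of placing $u^1_n$ versus $u^2_n$ in a common $v$-potential (and, symmetrically, of placing $v^1_n$ versus $v^2_n$ in a common $u$-potential). Whenever the relevant marginals are unequal, transferring the mass of the corresponding component onto the cheaper profile produces a strictly negative gap $-\delta$, precisely as in the cases $A_1\ne A_2$ there.

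The main obstacle is the borderline situation in which the marginals balance, together with the two-dimensional bookkeeping that is absent from Lemma~\ref{lem-ine-2}. In that degenerate case the gain must be extracted from the coupling itself: the cross energy contains the interaction $\int v^1_n (u^2_n)^2\,dx$, which is strictly positive and, the four sequences being nonnegative, co-centered, and non-vanishing (Lemma~\ref{lem-pro-min}), stays bounded below by a positive constant—if it tended to $0$ then $v^1_n (u^2_n)^2\to 0$ almost everywhere, contradicting $\|v^1_n\|_{L^2}^2\to b_1>0$ and $\|u^2_n\|_{L^2}^2\to a_2>0$, just as in the final case of Lemma~\ref{lem-ine-2}. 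The delicate point, which I expect to be where the real work lies, is that both $v$-components are now nontrivial, so the $u$- and $v$-redistributions are coupled through the cubic factor $\alpha\sqrt{\beta}$ in $P$; one must therefore organize the case analysis over which $u$- and $v$-profiles are the more efficient, and over the balanced sub-case, so as to exhibit a single fixed $\delta>0$ valid in every case. Passing to the limit $n\to\infty$, with the uniform $H^1$-bounds making the $o(1)$ terms genuine, then yields $I(a_1+a_2,b_1+b_2)\le I(a_1,b_1)+I(a_2,b_2)-\delta<I(a_1,b_1)+I(a_2,b_2)$, as claimed.
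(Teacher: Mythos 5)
Your overall strategy is the right one --- you correctly observe that Lemmas \ref{lem-ine-1} and \ref{lem-ine-2} cannot be chained to give joint subadditivity (the inequality $I(a_2,b_2)<I(a_2,0)+I(0,b_2)$ goes the wrong way), and your plan of cross competitors, marginal quantities $A_1,A_2$, a case analysis, and a positive lower bound on the cross coupling in the balanced case is exactly the architecture of the paper's proof. But the specific competitor you propose is where the argument breaks, and you have in effect flagged the unresolved point yourself. Testing $(\sqrt{\alpha}\,u^2_n,\sqrt{\beta}\,v^1_n)$ with $\alpha=(a_1+a_2)/a_2$ and $\beta=(b_1+b_2)/b_1$ concentrates the $v$-mass by amplitude rescaling, and this is generically a \emph{loss}: the kinetic term $\tfrac{\kappa\beta}{2}\|\nabla v^1_n\|^2_{L^2}$ grows linearly in $\beta$ while the only negative term $-\alpha\sqrt{\beta}\int v^1_n(u^2_n)^2\,dx$ grows like $\sqrt{\beta}$, and moreover the entire contribution of $(u^2_n,v^2_n)$'s own interaction $\int v^2_n(u^2_n)^2\,dx$ is thrown away. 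If $b_2\gg b_1$ there is no reason this competitor's energy should fall below $I(a_1,b_1)+I(a_2,b_2)$, and the ``coupled redistribution through $\alpha\sqrt{\beta}$'' that you defer is not a bookkeeping issue but the obstruction itself.

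The paper's proof avoids this entirely by \emph{not} rescaling the $v$-component at all, and by doing the opposite of keeping the profiles co-centered: after reducing to nonnegative, compactly supported sequences, it translates $(u^1_n,v^1_n)$ by $x_n$ so that $\tilde v^1_n=v^1_n(\cdot-x_n)$ and $v^2_n$ have disjoint supports, and takes $v_n=\tilde v^1_n+v^2_n$, which has $L^2$-mass $b_1+b_2$ exactly and additive kinetic energy. Only the $u$-component is then amplitude-rescaled (by $\alpha_{11}=1+a_2/a_1$ or $1+a_1/a_2$ onto whichever profile has the smaller marginal $A_i=\tfrac{1}{a_i}\lim\bigl(\tfrac12\|\nabla u^i_n\|^2_{L^2}-\int v^i_n(u^i_n)^2\,dx\bigr)$), so the comparison collapses to the one-parameter analysis of Lemma \ref{lem-ine-2}: the unwanted cross term $-\alpha_{11}\int v^2_n(\tilde u^1_n)^2\,dx$ is nonpositive and is simply discarded in the unbalanced cases, and is the source of the strict gap $\delta>0$ in the balanced case $A_1=A_2$. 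You should replace your doubly rescaled competitor with this disjoint-support sum in the $v$-slot; as written, your construction does not close.
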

	\begin{proof}
	Let $(u^1_n,v^1_n)_{n\geq 1}$ and $(u^2_n, v^2_n)_{n\geq 1}$ be minimizing sequences for $I(a_1,b_1)$ and $I(a_2,b_2)$, i.e.
	\begin{align*}
	\lim_{n\rightarrow \infty} \|u^1_n\|^2_{L^2} &=a_1, &\quad \lim_{n\rightarrow \infty} \|v^1_n\|^2_{L^2} &= b_1, &\quad \lim_{n\rightarrow \infty} E(u^1_n,v^1_n) &= I(a_1,b_1), \\
	\lim_{n\rightarrow \infty} \|u^2_n\|^2_{L^2} &=a_2, &\quad \lim_{n\rightarrow \infty} \|v^2_n\|^2_{L^2} &= b_2, &\quad \lim_{n\rightarrow \infty} E(u^2_n,v^2_n) &= I(a_2,b_2).
	\end{align*}
	We look for a sequence of functions $(u_n,v_n)_{n\geq 1}$ in $H^1 \times H^1$ such that $\lim_{n\rightarrow \infty} \|u_n\|^2_{L^2} = a_1 + a_2$, $\lim_{n\rightarrow \infty} \|v_n\|^2_{L^2} = b_1+b_2$ and $\lim_{n\rightarrow \infty} E(u_n,v_n) = I(a_1+a_2, b_1+b_2)$ such that $I(a_1+a_2, b_1+b_2) < I(a_1,b_1) + I(a_2,b_2)$. 
	
	Without loss of generality, we may assume that $u^1_n, u^2_n$ and $v^1_n, v^2_n$ are nonnegative. By a density argument, we may also suppose that $u^1_n, u^2_n$ and $v^1_n, v^2_n$ have compact support. For each $n$, we choose $x_n \in \mathbb{R}^d$ so that $\tilde{v}^1_n(\cdot) = v^1_n(\cdot-x_n)$  and $v^2_n$ have disjoint support. Since $(u^1_n, v^1_n)_{n\geq 1}$ and $(u^2_n, v^2_n)_{n\geq 1}$ are bounded in $H^1 \times H^1$, passing to subsequences, we can consider the following values
	\begin{align*}
	A_1 &=  \frac{1}{a_1} \lim_{n\rightarrow \infty} \left( \frac{\|\nabla u^1_n\|^2_{L^2}}{2} - \int v^1_n(u^1_n)^2 dx \right), \\
	A_2 &= \frac{1}{a_2} \lim_{n\rightarrow \infty} \left( \frac{\|\nabla u^2_n\|^2_{L^2}}{2}-\int v^2_n (u^2_n)^2 dx \right).
	\end{align*}
	Let $v_n= \tilde{v}^1_n +v^2_n$. Since $\tilde{v}^1_n$, $v^2_n$ have disjoint support, we have that $\lim_{n\rightarrow \infty} \|v_n\|^2_{L^2} = b_1+b_2$. 
	
	Assume first $A_2>A_1$. Set $\alpha_{11}= 1+
	\frac{a_2}{a_1}$ and $\tilde{u}^1_n(\cdot)= u^1_n(\cdot-x_n)$. It follows that
	\begin{align} \label{ine-3-prof}
	I(a_1+a_2, b_1+b_2) &\leq E(\sqrt{\alpha_{11}} \tilde{u}^1_n, v_n) \nonumber \\
	&= \frac{\alpha_{11}}{2} \|\nabla \tilde{u}^1_n\|^2_{L^2} + \frac{\kappa}{2} \|\nabla v_n\|^2_{L^2} - \alpha_{11} \int v_n (\tilde{u}^1_n)^2 dx \nonumber \\
	&= \frac{\alpha_{11}}{2} \|\nabla \tilde{u}^1_n\|^2_{L^2} + \frac{\kappa}{2} \|\nabla \tilde{v}^1_n\|^2_{L^2} + \frac{\kappa}{2} \|\nabla v^2_n\|^2_{L^2}  - \alpha_{11} \int \tilde{v}^1_n (\tilde{u}^1_n)^2 dx \\
	&\mathrel{\phantom{= \frac{\alpha_{11}}{2} \|\nabla u^1_n\|^2_{L^2} + \frac{\kappa}{2} \|\nabla \tilde{v}^1_n\|^2_{L^2} + \frac{\kappa}{2} \|\nabla v^2_n\|^2_{L^2}  }}- \alpha_{11} \int v^2_n (\tilde{u}^1_n)^2 dx \nonumber \\
	&\leq  E(u^1_n, v^1_n) + \frac{\kappa}{2} \|\nabla v^2_n\|^2_{L^2} + \frac{a_2}{a_1}\left( \frac{\|\nabla u^1_n\|^2_{L^2}}{2} - \int v^1_n (u^1_n)^2 dx \right). \nonumber
	\end{align}
	Set $\delta = a_2(A_2-A_1)>0$. Passing the limit as $n\rightarrow \infty$, we obtain
	\begin{align*}
	I(a_1+a_2,b_1+b_2) &\leq I(a_1,b_1) +\lim_{n\rightarrow \infty} \frac{\kappa}{2} \|\nabla v^2_n\|^2_{L^2} + \frac{a_2}{a_1} (a_1A_1) \\
	&= I(a_1,b_1) + \lim_{n\rightarrow \infty} \frac{\kappa}{2} \|\nabla v^2_n\|^2_{L^2} + a_2 A_2 -\delta \\
	&= I(a_1,b_1) + \lim_{n\rightarrow \infty} E(u^2_n, v^2_n) -\delta \\
	&= I(a_1,b_1) + I(a_2,b_2) -\delta < I(a_1,b_1) + I(a_2,b_2).
	\end{align*}
	
	Let us now consider the case $A_1>A_2$. Set $\alpha_{11} = 1+\frac{a_1}{a_2}$. We have
	\begin{align*}
	I(a_1+a_2, b_1+b_2) &\leq E(\sqrt{\alpha_{11}} u^2_n, v_n) \\
	&= \frac{\alpha_{11}}{2} \|\nabla u^2_n\|^2_{L^2} + \frac{\kappa}{2} \|\nabla v_n\|^2_{L^2} - \alpha_{11} \int v_n (u^2_n)^2 dx \\
	&= \frac{\alpha_{11}}{2} \|\nabla u^2_n\|^2_{L^2} + \frac{\kappa}{2} \|\nabla \tilde{v}^1_n\|^2_{L^2} + \frac{\kappa}{2} \|\nabla v^2_n\|^2_{L^2} - \alpha_{11} \int \tilde{v}^1_n (u^2_n)^2 dx \\
	&\mathrel{\phantom{= \frac{\alpha_{11}}{2} \|\nabla u^2_n\|^2_{L^2} + \frac{\kappa}{2} \|\nabla \tilde{v}^1_n\|^2_{L^2} + \frac{\kappa}{2} \|\nabla v^2_n\|^2_{L^2} }} - \alpha_{11} \int v^2_n (u^2_n)^2 dx \\
	& \leq E(u^2_n, v^2_n) + \frac{\kappa}{2} \|\nabla v^1_n\|^2_{L^2} + \frac{a_1}{a_2} \left( \frac{\|\nabla u^2_n\|^2_{L^2}}{2} - \int v^2_n (u^2_n)^2 dx \right).
	\end{align*}
	Set $\delta = a_1(A_1-A_2)>0$. Passing the limit as $n\rightarrow \infty$, we get
	\begin{align*}
	I(a_1+a_2,b_1+b_2) &\leq I(a_2,b_2) + \lim_{n\rightarrow \infty} \frac{\kappa}{2} \|\nabla v^1_n\|^2_{L^2} + \frac{a_1}{a_2}(a_2A_2) \\
	&= I(a_2,b_2) + \lim_{n\rightarrow \infty} \frac{\kappa}{2} \|\nabla v^1_n\|^2_{L^2} + a_1 A_1 - \delta \\
	&= I(a_2,b_2) + \lim_{n\rightarrow \infty} E(u^1_n, v^1_n) -\delta \\
	&= I(a_2,b_2) + I(a_1,b_1) -\delta < I(a_1,b_1) + I(a_2,b_2).
	\end{align*}
	
	Finally, we consider the case $A_1=A_2$. Set $\alpha_{11}=1+ \frac{a_2}{a_1}$. By \eqref{ine-3-prof}, we have
	\begin{align*}
	I(a_1+a_2, b_1+b_2) &\leq E(\sqrt{\alpha_{11}} \tilde{u}^1_n, v_n) \\
	&= \frac{\alpha_{11}}{2} \|\nabla \tilde{u}^1_n\|^2_{L^2} + \frac{\kappa}{2} \|\nabla \tilde{v}^1_n\|^2_{L^2} + \frac{\kappa}{2} \|\nabla (v^2_n)^*\|^2_{L^2}  - \alpha_{11} \int v^1_n (u^1_n)^2 dx \\
	&\mathrel{\phantom{= \frac{\alpha_{11}}{2} \|\nabla \tilde{u}^1_n\|^2_{L^2} + \frac{\kappa}{2} \|\nabla \tilde{v}^1_n\|^2_{L^2} + \frac{\kappa}{2} \|\nabla (v^2_n)^*\|^2_{L^2}  }}- \alpha_{11} \int v^2_n (\tilde{u}^1_n)^2 dx \\
	& = E(u^1_n,v^1_n)+\frac{\kappa}{2} \|\nabla v^2_n\|^2_{L^2} + \frac{a_2}{a_1} \left( \frac{\|\nabla u^1_n\|^2_{L^2}}{2}-\int v^1_n (u^1_n)^2dx \right) \\
	&\mathrel{\phantom{= E(u^1_n,v^1_n)+\frac{\kappa}{2} \|\nabla v^2_n\|^2_{L^2} }} - \alpha_{11} \int v^2_n (\tilde{u}^1_n)^2 dx. 
	\end{align*}
	As in the proof of Lemma $\ref{lem-ine-2}$, there exists $\delta>0$ such that $\lim_{n\rightarrow \infty} \int v^2_n (\tilde{u}^1_n)^2 dx >\delta$. Passing the limit as $n\rightarrow \infty$, we get
	\begin{align*}
	I(a_1+a_2, b_1+b_2) &\leq I(a_1,b_1) + \lim_{n\rightarrow \infty} \frac{\kappa}{2} \|\nabla v^2_n\|^2_{L^2} + \frac{a_2}{a_1}(a_1A_1) -\delta \\
	&= I(a_1,b_1) + \lim_{n\rightarrow \infty} \frac{\kappa}{2} \|\nabla v^2_n\|^2_{L^2} + a_2 A_2 -\delta \\
	&= I(a_1,b_1) + \lim_{n\rightarrow \infty} E(u^2_n, v^2_n) -\delta \\
	&= I(a_1,b_1) + I(a_2,b_2)-\delta < I(a_1,b_1) + I(a_2,b_2).
	\end{align*}
	The proof is now complete.
	\end{proof}
	
	Combining Lemmas $\ref{lem-ine-1}, \ref{lem-ine-2}, \ref{lem-ine-3}$, we have the subadditivity property of $I(a,b)$.
	\begin{corollary} \label{coro-sub-add}
		Let $a_1,a_2,b_1,b_2 \geq 0$ be such that $a_1+a_2>0, b_1+b_2>0, a_1+b_1>0$ and $a_2+b_2>0$. Then it holds that
		\[
		I(a_1+a_2,b_1+b_2) < I(a_1,b_1) + I(a_2,b_2).
		\]
	\end{corollary}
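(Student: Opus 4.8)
The plan is to deduce the general strict subadditivity directly from the three inequalities already in hand, by a short case analysis according to which of the four parameters $a_1,a_2,b_1,b_2$ vanish. The four hypotheses $a_1+a_2>0$, $b_1+b_2>0$, $a_1+b_1>0$ and $a_2+b_2>0$ guarantee that no forbidden pair can both vanish, so at most two parameters are zero and only finitely many configurations survive. Throughout I would freely use the trivial symmetry that both $I(a_1,b_1)+I(a_2,b_2)$ and $I(a_1+a_2,b_1+b_2)$ are invariant under interchanging the two pieces $(a_1,b_1)\leftrightarrow(a_2,b_2)$; this lets me apply each lemma after a harmless relabeling of indices. Note also that each of the lemmas only requires strict positivity of the arguments actually appearing in the corresponding inequality, so I only need to check that those specific parameters are positive in each reduction.

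If all four parameters are strictly positive, the claim is exactly Lemma \ref{lem-ine-3}, and nothing further is needed. Next I would treat the case in which exactly one parameter vanishes. If $a_1=0$, so that $a_2,b_1,b_2>0$, the pieces are $(0,b_1)$ and $(a_2,b_2)$ and the desired bound $I(a_2,b_1+b_2)<I(0,b_1)+I(a_2,b_2)$ is precisely the second inequality of Lemma \ref{lem-ine-2}; the case $a_2=0$ follows after swapping the two pieces. If instead $b_1=0$, so that $a_1,a_2,b_2>0$, the pieces are $(a_1,0)$ and $(a_2,b_2)$, and the bound $I(a_1+a_2,b_2)<I(a_1,0)+I(a_2,b_2)$ is the first inequality of Lemma \ref{lem-ine-2} once the nontrivial piece $(a_2,b_2)$ is relabeled as the first piece; the case $b_2=0$ is symmetric.

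Finally, when exactly two parameters vanish, the constraints force either $a_1=b_2=0$ or $a_2=b_1=0$, since every other pair would make one of the four hypotheses fail. In the first situation the two pieces are $(0,b_1)$ and $(a_2,0)$ with $a_2,b_1>0$, and the required inequality $I(a_2,b_1)<I(0,b_1)+I(a_2,0)$ is exactly Lemma \ref{lem-ine-1} applied with $a=a_2$, $b=b_1$; the second situation is identical after relabeling. Because three or four vanishing parameters are excluded by the hypotheses, these cases exhaust all possibilities. I do not expect any genuine obstacle here: the whole content has already been isolated into the three lemmas, and the only thing to watch is the bookkeeping that in each reduction the arguments fed into the invoked lemma are indeed strictly positive, which is immediate from the case assumptions.
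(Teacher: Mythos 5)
Your proof is correct and follows exactly the route the paper intends: the paper states the corollary as an immediate consequence of Lemmas \ref{lem-ine-1}, \ref{lem-ine-2} and \ref{lem-ine-3} without writing out the case analysis, and your exhaustive check of which parameters can vanish (together with the observation that the excluded pairs are precisely those ruled out by the four positivity hypotheses) supplies the missing bookkeeping faithfully. Your remark that each lemma's inequality only genuinely constrains the parameters appearing in it is the right way to justify invoking, e.g., the second inequality of Lemma \ref{lem-ine-2} when one of the four quantities is zero.
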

	
	\begin{lemma} \label{lem-dic-key}
		Let $L$ be as in \eqref{def-L}. Let $a,b>0$ and $(u_n,v_n)_{n\geq 1}$ be a minimizing sequence for $I(a,b)$. Then there exists $(a_1,b_1) \in [0,a] \times [0,b]$ such that $L= a_1+b_1$ and
		\begin{align} \label{rev-sub-add}
		I(a_1,b_1) + I(a-a_1,b-b_1) \leq I(a,b).
		\end{align}
	\end{lemma}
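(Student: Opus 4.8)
The plan is to carry out the classical dichotomy step of Lions' concentration–compactness method, exploiting two structural facts: the kinetic functional $K$ is superadditive up to small errors under a spatial splitting, while the cubic functional $P(u,v)=\mathrm{Re}\langle v,u^2\rangle$ is additive up to small errors. Consequently the energy $E=\tfrac12 K-P$ nearly decouples along a splitting that separates the concentrated mass $L$ from the escaping mass $a+b-L$. We know $L>0$ by Lemma \ref{lem-rul-van}, and the case $L=a+b$ is immediate (take $(a_1,b_1)=(a,b)$, since $I(0,0)=0$), so the content is the dichotomy regime $L\in(0,a+b)$.

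First I would apply Lions' dichotomy alternative to the density $g_n:=|u_n|^2+|v_n|^2$ (bounded in $L^1$, with total mass $\to a+b$). For each $\epsilon>0$ this provides a radius $R_\epsilon>0$, a diverging sequence $R_n\to\infty$, points $(y_n)\subset\mathbb R^d$, and a subsequence such that the mass of $g_n$ in $B(y_n,R_\epsilon)$ is within $\epsilon$ of $L$, the mass outside $B(y_n,R_n)$ is within $\epsilon$ of $a+b-L$, and the mass in the transition annulus $R_\epsilon\le|x-y_n|\le R_n$ is at most $\epsilon$ for large $n$. I may also fix $R_\epsilon$ so large that $R_\epsilon^{-1}<\epsilon$.

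Next I would localize with smooth cut-offs. Fixing $\chi\in C_c^\infty(\mathbb R^d)$ with $\chi\equiv1$ on $B(0,1)$ and $\mathrm{supp}\,\chi\subset B(0,2)$, set the inner pieces $u_n^1=\chi(\tfrac{\cdot-y_n}{R_\epsilon})u_n$, $v_n^1=\chi(\tfrac{\cdot-y_n}{R_\epsilon})v_n$ and the outer pieces $u_n^2=(1-\chi(\tfrac{\cdot-y_n}{R_n}))u_n$, $v_n^2=(1-\chi(\tfrac{\cdot-y_n}{R_n}))v_n$, so that the two pairs have disjoint supports separated by the annulus. The cut-off gradients are $O(R_\epsilon^{-1})$ resp. $O(R_n^{-1})$ and are supported in the annulus; combining these with the uniform $H^1$ bound from Lemma \ref{lem-pro-min}(1) and the Gagliardo–Nirenberg/Sobolev inequality, one obtains
\[
K(u_n,v_n)\ge K(u_n^1,v_n^1)+K(u_n^2,v_n^2)-o(1), \qquad P(u_n,v_n)=P(u_n^1,v_n^1)+P(u_n^2,v_n^2)+o(1),
\]
where the errors vanish as $n\to\infty$ and $\epsilon\to0$, and every mixed term of the cubic functional is controlled. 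Hence $E(u_n,v_n)\ge E(u_n^1,v_n^1)+E(u_n^2,v_n^2)-o(1)$. Passing to a further subsequence, I set $a_1=\lim\|u_n^1\|_{L^2}^2$, $b_1=\lim\|v_n^1\|_{L^2}^2$, so that $a_1+b_1$ lies within $\epsilon$ of $L$, while the outer masses lie within $\epsilon$ of $a-a_1,b-b_1$. Using the definition of $I$ as an infimum, together with an $L^2$-rescaling of each piece by factors tending to $1$ (which, by the uniform bounds, alters the energy only by $o(1)$) to adjust the masses to $a_1,b_1$ and $a-a_1,b-b_1$ exactly, I get $E(u_n^1,v_n^1)\ge I(a_1,b_1)-o(1)$ and likewise for the outer pair. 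Letting $n\to\infty$ yields $I(a,b)\ge I(a_1,b_1)+I(a-a_1,b-b_1)-C\epsilon$ with $C$ independent of $\epsilon$, and a diagonal argument sending $\epsilon\to0$ while extracting $(a_1,b_1)$ from the compact set $[0,a]\times[0,b]$ produces a limit pair with $a_1+b_1=L$ satisfying \eqref{rev-sub-add}.

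The main obstacle will be the bookkeeping that forces $a_1+b_1=L$ exactly while keeping all errors under control: one must run the $\epsilon$-dependent splitting, extract convergent masses, and only then send $\epsilon\to0$ through a diagonal sequence, verifying that the rescaling-to-exact-mass step survives this double limit. The system-specific point requiring care is the additivity of $P(u,v)=\mathrm{Re}\langle v,u^2\rangle$: unlike a single-power nonlinearity it couples the two components, so each mixed product (for instance $\int\overline{v_n^{1}}\,(u_n^{2})^2$, $\int\overline{v_n^{2}}\,(u_n^{1})^2$, and the genuinely annulus-supported remainder) must be estimated separately by Hölder and Gagliardo–Nirenberg, using the smallness of the annulus mass.
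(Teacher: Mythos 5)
Your proposal follows essentially the same route as the paper's proof: a Lions-type dichotomy splitting with smooth cut-offs, near-additivity of $K$ and $P$ (hence of $E$) up to errors controlled by the small annulus mass and the uniform $H^1$ bound from Lemma \ref{lem-pro-min}, an $L^2$-rescaling to exact masses, and a final $\epsilon\to 0$ extraction of $(a_1,b_1)$ from the compact set $[0,a]\times[0,b]$. The only cosmetic difference is that the paper localizes at a single pair of radii $R,2R$ with a partition of unity $\vartheta_R^2+\chi_R^2=1$ (so the split masses add up exactly and the overlapping cubic terms are estimated on the annulus), whereas you use disjointly supported inner/outer cut-offs at scales $R_\epsilon$ and $R_n\to\infty$; both work.
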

	\begin{proof}
		Let $\epsilon>0$ be arbitrary. It follows from the definition of $L$ that there exist $R_\epsilon>0$ and $N_\epsilon \in \mathbb{N}$ such that for $R\geq R_\epsilon$ and $n\geq N_\epsilon$, one has $L-\epsilon <M(R) \leq M(2R) \leq L$ and $L-\epsilon < M_n(R) \leq M_n(2R) \leq L+\epsilon$. Thus by the definition of $M_n$, for every $n\geq N_\epsilon$, there exists a sequence of points $(y_n)_{n\geq 1} \subset \mathbb{R}^d$  such that
		\begin{align} \label{dic-1}
		\int_{B(y_n,R)} |u_n(x)|^2 + |v_n(x)|^2 dx > L-\epsilon \text{ and } \int_{B(y_n,2R)} |u_n(x)|^2 + |v_n(x)|^2 dx <L+\epsilon.
		\end{align}
		Now let $\vartheta \in C^\infty_0(B(0,2))$ be such that $\vartheta \equiv 1$ on $B(0,1)$ and $\chi \in C^\infty(\mathbb{R}^d)$ be such that $\vartheta^2 + \chi^2 =1$ on $\mathbb{R}^d$. For any $R>0$, we define
		\[
		\vartheta_R(x) = \vartheta(x/R), \quad \chi_R(x) = \chi(x/R).
		\]
		We next define the functions
		\begin{align*}
		(u_n^1(x), v_n^1(x)) &:= \vartheta_R(x-y_n) (u_n(x),v_n(x)), \quad x \in \mathbb{R}^d, \\
		(u^2_n(x), v^2_n(x)) &:= \chi_R(x-y_n) (u_n(x), v_n(x)), \quad x \in \mathbb{R}^d.
		\end{align*}
		Since $(u_n^i, v^i_n)_{n\geq 1}, i=1,2$ are bounded in $L^2$. Up to subsequence, we see that $\|u^1_n\|^2_{L^2} \rightarrow a_1, \|v^1_n\|^2_{L^2} \rightarrow b_1$ as $n\rightarrow \infty$, where $a_1 \in [0,a]$ and $b_1 \in [0,b]$. We also have that $\|u^2_n\|^2_{L^2} \rightarrow a-a_1, \|v^2_n\|^2_{L^2} \rightarrow b-b_1$ as $n\rightarrow \infty$. Thus
		\begin{align} \label{dic-2}
		a_1+b_1 = \lim_{n\rightarrow \infty} \int |u^1_n(x)|^2 + |v^1_n(x)|^2 dx = \lim_{n\rightarrow \infty} \int \vartheta_R^2(x-y_n) (|u_n(x)|^2 + |v_n(x)|^2) dx.
		\end{align}
		By \eqref{dic-1} and \eqref{dic-2}, we have
		\begin{align} \label{dic-con-1}
		|a_1+b_1 - L| <\epsilon.
		\end{align}
		We claim now that there exists $C>0$ such that for every $n$,
		\begin{align} \label{dic-3}
		E(u^1_n, v^1_n) + E(u^2_n,v^2_n) \leq E(u_n,v_n) + C\epsilon.
		\end{align}
		Indeed, 
		\begin{align*}
		E(u^1_n,v^1_n) &= \frac{\|\nabla u^1_n\|^2}{2} +\frac{\kappa}{2} \|\nabla v^1_n\|^2_{L^2} - \text{Re}\int v^1_n (u^1_n)^2 dx \\
		&= \frac{\|\nabla(\vartheta_R u_n)\|^2_{L^2}}{2} + \frac{\kappa}{2} \|\nabla (\vartheta_R v_n)\|^2_{L^2}  - \text{Re} \int \vartheta^3_R v_n (u_n)^2 dx.
		\end{align*}
		We see that
		\begin{align*}
		\|\nabla(\vartheta_R u_n)\|^2_{L^2} &\leq (\|\vartheta_R \nabla u_n\|^2_{L^2} + \|\nabla \vartheta_R u_n\|^2_{L^2})^2 \\
		&= \|\vartheta_R \nabla u_n\|^2_{L^2} + \|\nabla \vartheta_R u_n\|^2_{L^2} + 2 \|\vartheta_R \nabla u_n\|_{L^2} \|\nabla \vartheta_R u_n\|_{L^2} \\
		&\leq \|\vartheta_R \nabla u_n\|^2_{L^2} +  \|\nabla \vartheta_R\|_{L^\infty}^2 \|u_n\|^2_{L^2} + 2 \|\vartheta_R\|_{L^\infty} \|\nabla u_n\|_{L^2} \|\nabla \vartheta_R\|_{L^\infty} \|u_n\|_{L^2} \\
		&= \|\vartheta_R \nabla u_n\|^2_{L^2} +  \frac{\|\nabla \vartheta\|_{L^\infty}^2}{R^2} \|u_n\|^2_{L^2} + 2 \|\vartheta_R\|_{L^\infty} \frac{\|\nabla \vartheta\|_{L^\infty}}{R} \|\nabla u_n\|_{L^2} \|u_n\|_{L^2} \\
		&\leq \|\vartheta_R \nabla u_n\|^2_{L^2} + C\epsilon.
		\end{align*}
		The last inequality follows by taking $R$ sufficiently large and using the fact that $u_n$ is bounded in $H^1$. Similarly, we have
		\[
		\|\nabla(\vartheta_R v_n)\|^2_{L^2} \leq \|\vartheta_R \nabla v_n\|^2_{L^2} + C\epsilon.
		\]
		Thus
		\[
		E(u^1_n,v^1_n) \leq \frac{\|\vartheta_R \nabla u_n\|^2_{L^2}}{2} + \frac{\kappa}{2} \|\vartheta_R \nabla v_n\|^2_{L^2} - \text{Re}\int \vartheta_R^2 v_n (u_n)^2 dx + \text{Re}\int (\vartheta_R^2 - \vartheta_R^3) v_n (u_n)^2 dx + C\epsilon.
		\]
		Due to the support of $\vartheta_R$ and \eqref{dic-1}, we have
		\begin{align*}
		\left| \text{Re} \int (\vartheta_R^2 - \vartheta_R^3) v_n (u_n)^2 dx \right| &\leq C \int_{R<|x-y_n|<2R} |v_n| |u_n|^2 dx \\
		&\leq C \int_{R<|x-y_n|<2R} |v_n|^3 + |u_n|^3 dx \\
		& \leq C \int_{R<|x-y_n|<2R} |v_n|^2 + |u_n|^2 dx \leq C\epsilon.
		\end{align*}
		We obtain 
		\[
		E(u^1_n, v^1_n) \leq \int \vartheta_R^2 \left(\frac{1}{2}|\nabla u_n|^2 + \frac{\kappa}{2} |\nabla v_n|^2 - \text{Re}(v_n (u_n)^2) \right) dx + C\epsilon.
		\]
		Similarly, we have
		\[
		E(u^2_n, v^2_n) \leq \int \chi_R^2 \left(\frac{1}{2}|\nabla u_n|^2 + \frac{\kappa}{2} |\nabla v_n|^2 - \text{Re}(v_n (u_n)^2) \right) dx + C\epsilon.
		\]
		Suming these two quantities and using the fact $\vartheta_R^2 + \chi_R^2 =1$, we get \eqref{dic-3}. 
		
		We now consider the case all $a_1, b_1, a-a_1, b-b_1$ are positive. We set 
		\[
		\alpha^1_n = \frac{\sqrt{a_1}}{\|u^1_n\|_{L^2}}, \quad \beta^1_n = \frac{\sqrt{b_1}}{\|v^1_n\|_{L^2}}, \quad \alpha^2_n = \frac{\sqrt{a-a_1}}{\|u^2_n\|_{L^2}}, \quad \beta^2_n = \frac{\sqrt{b-b_1}}{\|v^2_n\|_{L^2}}. 
		\]
		It follows that
		\[
		\|\alpha^1_n u^1_n\|^2_{L^2} = a_1, \quad \|\beta^1_n v^1_n\|^2_{L^2} = b_1, \quad \|\alpha^2_n u^2_n\|^2_{L^2} = a-a_1, \quad \|\beta^2_n v^2_n\|^2_{L^2} = b-b_1.
		\]
		Note that all the scaling $\alpha^1_n, \beta^1_n, \alpha^2_n$ and $\beta^2_n$ tend to $1$ as $n\rightarrow \infty$. Thus, up to subsequence,
		\begin{align} \label{dic-4}
		I(a_1,b_1) + I(a-a_1,b-b_1) \leq \lim_{n\rightarrow \infty} E(u^1_n, v^1_n) + E(u^2_n, v^2_n).
		\end{align}
		In the case $a_1 =0$, we see that $\|u^1_n\|^2_{L^2} \rightarrow 0$ as $n\rightarrow \infty$. This implies that 
		\[
		\text{Re} \int v^1_n (u^1_n)^2 dx \rightarrow 0 \text{ as } n\rightarrow \infty.
		\]
		Therefore, 
		\[
		I(0,b_1) \leq \lim_{n\rightarrow \infty} \frac{\kappa}{2} \|\nabla v^1_n\|^2_{L^2} \leq \lim_{n\rightarrow \infty} E(u^1_n, v^1_n).
		\]
		Arguing as in the first case, we get
		\[
		I(a, b-b_1) \leq \lim_{n\rightarrow \infty} E(u^2_n, v^2_n).
		\]
		Hence we still get \eqref{dic-4}. The case $b_1=0$ is similar.
		Therefore in all cases, we have \eqref{dic-4}. Combining \eqref{dic-4} with \eqref{dic-3}, we obtain
		\begin{align} \label{dic-con-2}
		I(a_1,b_1) + I(a-a_1,b-b_1) &\leq \lim_{n\rightarrow \infty} E(u^1_n,v^1_n) + E(u^2_n,v^2_n) \nonumber \\
		&\leq \lim_{n\rightarrow \infty} E(u_n,v_n) + C \epsilon \nonumber \\
		&\leq I(a,b)+ C\epsilon.
		\end{align}
		As $\epsilon>0$ is arbitrary, the result follows from \eqref{dic-con-1} and \eqref{dic-con-2}.
	\end{proof}
	
	We are now able to rule out the ``dichotomy" possibility. 
	\begin{lemma} \label{lem-rul-dic}
		Let $d\leq 3$, $a,b>0$ and $L$ be as in \eqref{def-L}. Then for any minimizing sequence $(u_n,v_n)_{n\geq 1}$ for $I(a,b)$, it holds that $L\notin (0, a+b)$, that is, the dichotomy cannot occur.
	\end{lemma}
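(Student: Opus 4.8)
The plan is to argue by contradiction, combining the reverse subadditivity inequality supplied by Lemma \ref{lem-dic-key} with the strict subadditivity of Corollary \ref{coro-sub-add}. These two facts point in opposite directions, and the only way they can coexist is if the splitting in the dichotomy scenario is degenerate; I will show that $L \in (0,a+b)$ forces a genuinely nontrivial splitting, producing the contradiction.

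Concretely, suppose for contradiction that $L \in (0, a+b)$. Lemma \ref{lem-dic-key} then furnishes a pair $(a_1, b_1) \in [0,a]\times[0,b]$ with $L = a_1 + b_1$ and
\[
I(a_1, b_1) + I(a-a_1, b-b_1) \leq I(a,b).
\]
I would next set $(a_2, b_2) := (a-a_1, b-b_1)$ and verify that the four hypotheses of Corollary \ref{coro-sub-add} hold. The condition $a_1 + a_2 = a > 0$ and $b_1 + b_2 = b > 0$ are immediate from $a,b>0$. The condition $a_1 + b_1 > 0$ is exactly $L > 0$, and the condition $a_2 + b_2 = (a+b) - L > 0$ is exactly $L < a+b$; both hold precisely because we are in the dichotomy regime $L \in (0,a+b)$.

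With the hypotheses verified, Corollary \ref{coro-sub-add} yields the strict inequality
\[
I(a,b) = I(a_1 + a_2, b_1 + b_2) < I(a_1, b_1) + I(a_2, b_2) = I(a_1, b_1) + I(a-a_1, b-b_1),
\]
which directly contradicts the reverse inequality obtained above from Lemma \ref{lem-dic-key}. Hence $L \notin (0, a+b)$, and the dichotomy cannot occur.

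I do not expect a serious obstacle here, since the heavy lifting has already been done: the strict subadditivity of Corollary \ref{coro-sub-add} (proved via the gluing arguments of Lemmas \ref{lem-ine-1}--\ref{lem-ine-3}) and the reverse inequality of Lemma \ref{lem-dic-key} (proved through the cutoff decomposition and localized energy estimates) are the genuinely substantive inputs. The one point requiring care within this lemma is ensuring that $L \in (0,a+b)$ really translates into the strict positivity of both $a_1 + b_1$ and $a_2 + b_2$, so that the strict—rather than merely the non-strict—subadditivity applies; this is what closes the gap and excludes dichotomy.
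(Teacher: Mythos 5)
Your proposal is correct and follows exactly the paper's own argument: assume $L\in(0,a+b)$, invoke Lemma \ref{lem-dic-key} to obtain $(a_1,b_1)$ with $a_1+b_1=L$ and the reverse inequality, check the positivity hypotheses $a_1+b_1=L>0$ and $(a-a_1)+(b-b_1)=a+b-L>0$, and contradict via the strict subadditivity of Corollary \ref{coro-sub-add}. No differences worth noting.
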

	
	\begin{proof}
		Assume by contradiction that the dichotomy occurs, that is, $L \in (0, a+b)$. Let $a_1,b_1$ be as in Lemma $\ref{lem-dic-key}$, i.e. $a_1+b_1=L$ and 
		\begin{align} \label{rul-dic}
		I(a_1,b_1) + I(a-a_1, b-b_1) \leq I(a,b).
		\end{align}
		On the other hand, since $a,b>0$, $a_1+b_1 =L >0$ and $a-a_1 + b-b_1 = a+b-L>0$, it follows from Corollary $\ref{coro-sub-add}$ that
		\[
		I(a,b) < I(a_1,b_1) + I(a-a_1, b-b_1)
		\]
		which contradicts to \eqref{rul-dic}. The proof is complete.
	\end{proof}

	We are now able to show the existence of minimizers for $I(a,b)$. 
	\begin{lemma} \label{lem-exi-min}
		Let $d \leq 3$ and $a,b>0$. Let $(u_n,v_n)_{n\geq 1}$ be any minimizing sequence for $I(a,b)$. Then there exists a sequence $(y_n)_{n\geq 1} \subset \mathbb{R}^d$ such that the sequence $(u_n(\cdot+ y_n), v_n(\cdot+y_n))_{n\geq 1}$ has a subsequence which converges strongly in $H^1 \times H^1$ to some $(u,v)$, which is a minimizer for $I(a,b)$. That is the set $\mathcal{G}_{a,b}$ is not empty.
	\end{lemma}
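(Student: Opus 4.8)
The plan is to run the concentration-compactness trichotomy to its conclusion. The value $L$ from \eqref{def-L} must fall into exactly one of the three cases (vanishing, dichotomy, compactness). Lemma \ref{lem-rul-van} already shows $L>0$, ruling out vanishing, and Lemma \ref{lem-rul-dic} shows $L\notin(0,a+b)$, ruling out dichotomy. Hence the only surviving possibility is compactness, $L=a+b$. By item (3) of the trichotomy this produces a sequence $(y_n)_{n\geq 1}\subset\mathbb{R}^d$ along which the translated mass densities $|u_n(\cdot+y_n)|^2+|v_n(\cdot+y_n)|^2$ are tight.

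Next I would extract the limit. Writing $\tilde{u}_n=u_n(\cdot+y_n)$ and $\tilde{v}_n=v_n(\cdot+y_n)$, Item (1) of Lemma \ref{lem-pro-min} shows this sequence is bounded in $H^1\times H^1$, so up to a subsequence $\tilde{u}_n\rightharpoonup u$ and $\tilde{v}_n\rightharpoonup v$ weakly in $H^1$. On each ball $B(0,R)$ the Rellich--Kondrachov theorem gives strong convergence in $L^2(B(0,R))$, and the tightness provided by compactness upgrades this to strong convergence on all of $\mathbb{R}^d$; thus $\tilde{u}_n\to u$ and $\tilde{v}_n\to v$ strongly in $L^2$, and by interpolation with the $H^1$ bound also in $L^3$ (more generally in $L^q$ for $2\le q<2^*$). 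In particular $\|u\|_{L^2}^2=a$ and $\|v\|_{L^2}^2=b$, so $(u,v)$ is admissible for $I(a,b)$, and $P(\tilde{u}_n,\tilde{v}_n)\to P(u,v)$.

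I would then pass to the limit in the energy. Weak lower semicontinuity of the gradient terms gives $K(u,v)\le\liminf_{n\to\infty}K(\tilde{u}_n,\tilde{v}_n)$, and combining with the convergence of $P$ yields
\[
E(u,v)=\frac{1}{2}K(u,v)-P(u,v)\le\liminf_{n\to\infty}E(\tilde{u}_n,\tilde{v}_n)=I(a,b).
\]
Since $(u,v)$ is admissible we also have $E(u,v)\ge I(a,b)$, forcing $E(u,v)=I(a,b)$. Thus $(u,v)$ is a minimizer and $\mathcal{G}_{a,b}\ne\emptyset$. To upgrade to strong $H^1$ convergence, note that $E(u,v)=\lim_n E(\tilde{u}_n,\tilde{v}_n)$ together with $P(\tilde{u}_n,\tilde{v}_n)\to P(u,v)$ gives $K(\tilde{u}_n,\tilde{v}_n)\to K(u,v)$; since each of $\|\nabla u\|_{L^2}^2$ and $\kappa\|\nabla v\|_{L^2}^2$ is weakly lower semicontinuous and their sum converges to the limiting sum, each gradient term must converge separately. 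Weak convergence together with convergence of norms then forces $\nabla\tilde{u}_n\to\nabla u$ and $\nabla\tilde{v}_n\to\nabla v$ strongly in $L^2$, which with the $L^2$ convergence already established yields strong convergence of $(\tilde{u}_n,\tilde{v}_n)$ to $(u,v)$ in $H^1\times H^1$.

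The main obstacle is the step where local Rellich compactness is promoted to \emph{global} strong $L^2$ convergence: this is exactly where tightness (hence the exclusion of both vanishing and dichotomy) is indispensable, for without it mass could escape to spatial infinity and the limit would lose the constraints $\|u\|_{L^2}^2=a$, $\|v\|_{L^2}^2=b$. The subsequent weak-to-strong upgrade for the gradients is more routine, resting on the Hilbert-space principle that weak convergence plus norm convergence implies strong convergence, applied to each component after verifying that the two gradient terms converge individually.
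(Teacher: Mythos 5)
Your proposal is correct and follows essentially the same route as the paper: rule out vanishing and dichotomy to land in the compactness case, extract a weak limit along translates, use Rellich plus tightness to get global strong $L^2$ convergence (hence the constraints $\|u\|_{L^2}^2=a$, $\|v\|_{L^2}^2=b$ and convergence of $P$), conclude $E(u,v)=I(a,b)$ by weak lower semicontinuity and admissibility, and finally upgrade to strong $H^1\times H^1$ convergence via weak convergence plus norm convergence. The only cosmetic difference is that you split the two gradient terms to show each converges separately, whereas the paper treats the product Hilbert norm as a whole; both are valid.
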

	
	\begin{proof}
		By Lemma $\ref{lem-rul-van}$ and Lemma $\ref{lem-rul-dic}$, we have $L=a+b$. Thus by the Lions' concentration compactness lemma, there exists a sequence $(y_n)_{n\geq 1} \subset \mathbb{R}^d$ such that for each $k\in \mathbb{N}$, there exists $R_k>0$ such that
		\[
		\int_{B(y_n,R_k)} |u_n(x)|^2 + |v_n(x)|^2 \geq (a+b) -\frac{1}{k},
		\]
		equivalently
		\begin{align} \label{exi-min-1}
		\int_{B(0,R_k)} |\tilde{u}_n(x)|^2 + |\tilde{v}_n(x)|^2 dx > (a+b) -\frac{1}{k},
		\end{align}
		where $\tilde{u}_n(x) = u_n(x+ y_n)$ and $\tilde{v}_n(x) = v_n(x+y_n)$. Since the translated sequence $(\tilde{u}_n, \tilde{v}_n)_{n\geq 1}$ is bounded in $H^1 \times H^1$, so up to subsequence, $(\tilde{u}_n, \tilde{v}_n) \rightharpoonup (u,v)$ in $H^1 \times H^1$. By the Fatou's lemma, we see that $\|u\|^2_{L^2} + \|v\|^2_{L^2} \leq a+b$. For each $k \in \mathbb{N}$, the embedding $H^1(B(0,R_k)) \hookrightarrow L^2(B(0,R_k))$ is compact, so up to a subsequence, we have $(\tilde{u}_n, \tilde{v}_n) \rightarrow (u,v)$ strongly in $L^2(B(0,R_k)) \times L^2(B(0,R_k))$. By a standard diagonalization argument, one may assume that there exists a subsequence of $(\tilde{u}_n, \tilde{v}_n)_{n\geq 1}$, still denoted by $(\tilde{u}_n, \tilde{v}_n)_{n\geq 1}$, satisfies
		\[
		(\tilde{u}_n, \tilde{v}_n) \rightarrow (u,v) \text{ strongly in } L^2(B(0,R_k)) \times L^2(B(0,R_k)) \text{ for every } k \in \mathbb{N}.
		\]
		Passing the limit as $n\rightarrow \infty$ in \eqref{exi-min-1}, we obtain
		\begin{align*}
		(a+b) -\frac{1}{k} &\leq \lim_{n\rightarrow \infty} \int_{B(0,R_k)} |\tilde{u}_n|^2 + |\tilde{v}_n|^2 dx \\
		&= \|u\|^2_{L^2(B(0,R_k))} + \|v\|^2_{L^2(B(0,R_k))} \leq \|u\|^2_{L^2} +\|v\|^2_{L^2}. 
		\end{align*}
		Since $\|u\|^2_{L^2}+\|v\|^2_{L^2} \leq a+b$ and $k\in \mathbb{N}$ is arbitrary, it follows that $\|u\|^2_{L^2} + \|v\|^2_{L^2} = a+b$. Thus $(\tilde{u}_n, \tilde{v}_n)_{n\geq 1} \rightarrow (u,v)$ strongly in $L^2 \times L^2$. 
		
		Since $(\tilde{u}_n, \tilde{v}_n) \rightharpoonup (u,v)$ weakly in $H^1\times H^1$, we have
		\[
		\|\nabla u\|^2_{L^2} \leq \liminf_{n\rightarrow \infty} \|\nabla \tilde{u}_n\|^2_{L^2}, \quad \|\nabla v\|^2_{L^2} \leq \liminf_{n\rightarrow \infty} \|\nabla \tilde{v}_n\|^2_{L^2}.
		\]
		On the other hand,
		\begin{align} \label{exi-min-2}
		\text{Re} \int \tilde{v}_n (\tilde{u}_n)^2 dx \rightarrow \text{Re} \int v u^2 dx \text{ as } n\rightarrow \infty.
		\end{align}
		Indeed, by H\"older's inequality and Sobolev embedding,
		\begin{align*}
		\left| \int (\tilde{v}_n) (\tilde{u}_n)^2 dx -  \int v u^2 dx \right| &\leq \left|\int (\tilde{v}_n-v) (\tilde{u}_n)^2 dx \right| + \left|\int v((\tilde{u}_n)^2 - u^2) dx \right| \\
		&\lesssim \|\tilde{v}_n-v\|_{L^2} \|\tilde{u}_n\|^2_{L^4} + \|v\|_{L^4} \|\tilde{u}_n-u\|_{L^2} \|\tilde{u}_n + u\|_{L^4} \\
		& \lesssim \|\tilde{v}_n-v\|_{L^2} \|\nabla \tilde{u}_n\|^2_{L^2} + \|\nabla v\|_{L^2} \|\tilde{u}_n-u\|_{L^2} \|\nabla(\tilde{u}_n + u)\|_{L^2} \rightarrow 0,
		\end{align*}
		as $n\rightarrow \infty$. This implies that
		\[
		E(u,v) \leq \lim_{n\rightarrow \infty} E(\tilde{u}_n, \tilde{v}_n) = I(a,b).
		\]
		On the other hand, $\|u\|^2_{L^2} = \lim_{n\rightarrow \infty} \|\tilde{u}_n\|^2_{L^2} = a$ and $\|v\|^2_{L^2}= \lim_{n\rightarrow \infty} \|\tilde{v}_n\|^2_{L^2}=b$, we have $I(a,b)\leq E(u,v)$. Therefore, $I(a,b)= E(u,v)$ or $(u,v)$ is a minimizer for $I(a,b)$ or $(u,v) \in \mathcal{G}_{a,b}$. 
		
		Finally, since $E(u,v)= \lim_{n\rightarrow \infty} E(\tilde{u}_n, \tilde{v}_n)$ and \eqref{exi-min-2}, we have
		\[
		\frac{1}{2}\|\nabla u\|^2_{L^2} + \frac{\kappa}{2} \|\nabla v\|^2_{L^2} = \lim_{n\rightarrow \infty} \frac{1}{2}\|\nabla \tilde{u}_n\|^2_{L^2} + \frac{\kappa}{2} \|\nabla \tilde{v}_n\|^2_{L^2}.
		\]
		This combined with $\|u\|^2_{L^2} + \|v\|^2_{L^2} = \lim_{n\rightarrow \infty} \|\tilde{u}_n\|^2_{L^2} + \|\tilde{v}_n\|^2_{L^2}$ implies that
		\[
		\|(u,v)\|_{H^1\times H^1} = \lim_{n\rightarrow \infty} \|(\tilde{u}_n,\tilde{v}_n)\|_{H^1\times H^1}.
		\]
		Since $(\tilde{u}_n, \tilde{v}_n) \rightarrow (u,v)$ weakly in $H^1 \times H^1$, we see that $(\tilde{u}_n, \tilde{v}_n) \rightarrow (u,v)$ strongly in $H^1 \times H^1$. The proof is complete.
	\end{proof}

	\noindent \textit{Proof of Theorem $\ref{theo-exi-min}$.}
		Item (1) follows from Lemma $\ref{lem-exi-min}$. We prove Item (2) by contradiction. Suppose that \eqref{lit-gro-1} is not true. Then there exist a subsequence $(u_{n_k}, v_{n_k})_{k\geq 1}$ of $(u_n,v_n)_{n\geq 1}$ and a constant $\delta>0$ such that
		\begin{align} \label{lit-gro-pro}
		\lim_{n\rightarrow \infty} \inf_{(w,z) \in G_{a,b}, y \in \mathbb{R}^d} \|(u_{n_k}(\cdot+y), v_{n_k}(\cdot+y))- (w,z)\|_{H^1 \times H^1} \geq \delta.
		\end{align}
		Since $(u_{n_k},v_{n_k})_{k\geq 1}$ is still a minimizing sequence for $I(a,b)$, by Item (1), there exist a sequence $(y_k)_{k\geq 1} \subset \mathbb{R}^d$ and $(g,h) \in \mathcal{G}_{a,b}$ such that up to subsequence,
		\[
		\lim_{n\rightarrow \infty} \|(u_{n_k}(\cdot+y_k), v_{n_k}(\cdot+y_k)) - (g,h)\|_{H^1\times H^1} =0,
		\]
		which contradicts to \eqref{lit-gro-pro}. Item (2) is thus proved. Item (3) follows directly from Item (2) using the fact that if $(w,z) \in \mathcal{G}_{a,b}$, then $(w(\cdot-y), z(\cdot-y)) \in \mathcal{G}_{a,b}$ for any $y \in \mathbb{R}^d$. Finally, we show Item (4). Since $(u,v)$ is a minimizer of $I(a,b)$, then there exist Lagrange multipliers $\omega_1, \omega_2 \in \mathbb{R}$ such that
		\[
		K'(u,v)[\chi,\vartheta]=0, \quad \forall (\chi, \vartheta) \in C^\infty_0 \times C^\infty_0,
		\]
		where $K(u,v)= E(u,v) + \frac{\omega_1}{2}\|u\|^2_{L^2} + \frac{\omega_2}{2}\|v\|^2_{L^2}$ and the prime denotes the Fr\'echet derivative. A direct computation shows
		\[
		\text{Re} \int \left( \nabla u \cdot \nabla \overline{\chi} + \kappa \nabla v \cdot \nabla \overline{\vartheta} + \omega_1 u \overline{\chi} + \omega_2 v \overline{\vartheta} - 2 v \overline{u} \overline{\chi} - u^2 \overline{\vartheta} \right) dx =0.
		\]
		Testing $(i\chi, i \vartheta)$ instead of $(\chi,\vartheta)$ and using the fact $\text{Re}(iz) = - \text{Im} (z)$, we obtain
		\[
		\text{Im } \int \left( \nabla u \cdot \nabla \overline{\chi} + \kappa \nabla v \cdot \nabla \overline{\vartheta} + \omega_1 u \overline{\chi} + \omega_2 v \overline{\vartheta} - 2 v \overline{u} \overline{\chi} - u^2 \overline{\vartheta} \right) dx =0.
		\]
		Thus
		\[
		\int \left( \nabla u \cdot \nabla \overline{\chi} + \kappa \nabla v \cdot \nabla \overline{\vartheta} + \omega_1 u \overline{\chi} + \omega_2 v \overline{\vartheta} - 2 v \overline{u} \overline{\chi} - u^2 \overline{\vartheta} \right) dx =0.
		\]
		Hence
		\[
		\left\{ 
		\renewcommand*{\arraystretch}{2}
		\begin{array}{rcl} 
		\mathlarger{\int} \nabla u \cdot \nabla \overline{\chi} dx + \omega_1 \mathlarger{\int} u \overline{\chi} dx & = & 2 \mathlarger{\int} v \overline{u}  \overline{\chi} dx, \\  
		\kappa \mathlarger{\int} \nabla v \cdot \nabla \overline{\vartheta} dx + \omega_2 \mathlarger{\int} v \overline{\vartheta} dx & = & \mathlarger{\int} u^2 \overline{\vartheta} dx,
		\end{array}
		\right.
		\]
		for any $(\chi, \vartheta) \in C^\infty_0 \times C^\infty_0$. This implies that $(u,v)$ solves \eqref{sys-ell-equ} in the sense of distribution. Moreover, by \cite[Lemma 1.3]{Tao}, $(u,v)$ is also a classical solution of \eqref{wor-Syst}. Since $\|\nabla (|u|)\|^2_{L^2} \leq \|\nabla u\|^2_{L^2}$ and $\text{Re} (\langle v, u^2 \rangle) \leq \langle |v|, |u|^2 \rangle$, we see that
		\[
		E(|u|,|v|) \leq E(u,v).
		\]
		This shows that $(|u|,|v|)$ is also a minimizer of $I(a,b)$ and $E(|u|,|v|)=E(u,v)$. It follows that $\|\nabla (|u|)\|^2_{L^2} = \|\nabla u\|^2_{L^2}$ and $\|\nabla(|v|)\|^2_{L^2} = \|\nabla v\|^2_{L^2}$. We now set $\tilde{u}(x) := \frac{u(x)}{|u(x)|}$. Since $|\tilde{u}|^2=1$, we have $\text{Re} (\overline{\tilde{u}} \nabla \tilde{u}) =0$ and 
		\[
		\nabla u = (\nabla(|u|)) \tilde{u} + |u| \nabla \tilde{u} = \tilde{u} (\nabla(|u|) + |u|\overline{\tilde{u}} \nabla \tilde{u}).
		\]
		Thus, we get $|\nabla u|^2 = |\nabla(|u|)|^2 + |u|^2 |\nabla \tilde{u}|^2$. Since $\|\nabla (|u|)\|^2_{L^2} = \|\nabla u\|^2_{L^2}$, it follows that
		\[
		\int |u|^2 |\nabla \tilde{u}|^2 dx =0.
		\]
		Thus $|\nabla \tilde{u}|=0$ and hence $\tilde{u}$ is a constant with $|\tilde{u}|=1$. This shows that there exists $\theta_1 \in \mathbb{R}$ such that $u(x) = e^{i\theta_1} \vartheta(x)$, where $\vartheta(x) = |u(x)|$. Similarly, there exists $\theta_2 \in \mathbb{R}$ such that $v(x)= e^{i\theta_2} \zeta(x)$, where $\zeta(x)= |v(x)|$. The proof is complete.
	\hfill $\Box$
		
	\subsection{The variational problem $J(c)$}
	\label{Sub:1.2}
	In this subsection, we study the variational problem \eqref{prob-2}.	
	\begin{lemma} \label{lem-neg-2}
		Let $d\leq 3$ and $c>0$. Then $-\infty < J(c) <0$. Moreover, any minimizing sequence of $J(c)$ is bounded in $H^1\times H^1$.
	\end{lemma}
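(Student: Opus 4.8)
The plan is to mirror the argument of Lemma \ref{lem-neg}, exploiting the fact that the Gagliardo-Nirenberg estimate \eqref{GN-app} is already expressed through the quantities $M(u,v)$ and $K(u,v)$. Consequently, fixing the single scalar $M(u,v)=c$ plays exactly the same role, for the purpose of these bounds, as fixing $\|u\|_{L^2}^2=a$ and $\|v\|_{L^2}^2=b$ separately.

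For the lower bound $J(c)>-\infty$, I would start from \eqref{GN-app}, which gives, for every $(u,v)$ with $M(u,v)=c$,
\[
|P(u,v)| \leq C(\kappa)\, c^{\frac{3}{2}-\frac{d}{4}}\, [K(u,v)]^{\frac{d}{4}} =: C(\kappa,c)\,[K(u,v)]^{\frac{d}{4}}.
\]
Hence
\[
E(u,v) = \frac{1}{2}K(u,v) - P(u,v) \geq \frac{1}{2}K(u,v) - C(\kappa,c)\,[K(u,v)]^{\frac{d}{4}}.
\]
Since $d\leq 3$ forces $\frac{d}{4}<1$, the scalar function $t\mapsto \frac{1}{2}t - C(\kappa,c)\,t^{\frac{d}{4}}$ is bounded below on $[0,\infty)$, so $E(u,v)$ is bounded below uniformly over the constraint set, and therefore $J(c)>-\infty$.

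For the upper bound $J(c)<0$, I would use the same $L^2$-invariant dilation as in Lemma \ref{lem-neg}. Choosing any $(u,v)\in H^1\times H^1$ with $u,v>0$ (so that $P(u,v)>0$) and $M(u,v)=c$, and setting $u_\gamma(x)=\gamma^{\frac{d}{2}}u(\gamma x)$, $v_\gamma(x)=\gamma^{\frac{d}{2}}v(\gamma x)$, one checks that $M(u_\gamma,v_\gamma)=M(u,v)=c$ for all $\gamma>0$, while
\[
E(u_\gamma,v_\gamma) = \frac{\gamma^2}{2}K(u,v) - \gamma^{\frac{d}{2}}P(u,v).
\]
As $\frac{d}{2}<2$ when $d\leq 3$ and $P(u,v)>0$, the second term dominates for small $\gamma$, so $E(u_\gamma,v_\gamma)<0$ for $\gamma$ sufficiently small, whence $J(c)<0$.

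Finally, boundedness of minimizing sequences follows from the lower-bound computation run in reverse, exactly as in Item (1) of Lemma \ref{lem-pro-min}. For a minimizing sequence $(u_n,v_n)$, the identity $\frac{1}{2}K(u_n,v_n)=E(u_n,v_n)+P(u_n,v_n)$ together with \eqref{GN-app} gives
\[
\frac{1}{2}K(u_n,v_n)\leq E(u_n,v_n) + C(\kappa,c)\,[K(u_n,v_n)]^{\frac{d}{4}}.
\]
Since $E(u_n,v_n)\to J(c)$ is bounded and $\frac{d}{4}<1$, this forces $K(u_n,v_n)\leq C$; combined with the constraint $M(u_n,v_n)=c$, which bounds $\|u_n\|_{L^2}$ and $\|v_n\|_{L^2}$, and with $\kappa>0$, we conclude that $(u_n,v_n)$ is bounded in $H^1\times H^1$. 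I expect no genuine obstacle beyond the one already handled for $I(a,b)$; the only point to verify with care is that fixing the single quantity $M(u,v)=c$ still yields the uniform constant $C(\kappa,c)$ in \eqref{GN-app}, which is immediate since that estimate depends on the mass only through $M(u,v)$.
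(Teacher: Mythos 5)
Your proposal is correct and follows essentially the same route as the paper: the lower bound via the Gagliardo--Nirenberg estimate \eqref{GN-app} with $M(u,v)=c$ and the subcritical exponent $\frac{d}{4}<1$ (the paper phrases the final step through Young's inequality, which is equivalent to your observation that $t\mapsto \frac{1}{2}t - C t^{\frac{d}{4}}$ is bounded below), the negativity via the same $L^2$-invariant dilation as in Lemma~\ref{lem-neg}, and the boundedness of minimizing sequences exactly as in Item (1) of Lemma~\ref{lem-pro-min}. No issues.
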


	\begin{proof}
		Let $(u,v) \in H^1 \times H^1$ be such that $M(u,v) =c$.  By \eqref{GN-app}, we see that
		\[
		|P(u,v)| \leq C [M(u,v)]^{\frac{3}{2}-\frac{d}{4}} [K(u,v)]^{\frac{d}{4}}.
		\]
		Since $M(u,v)=c$, it follows that $|P(u,v)| \leq A [K(u,v)]^{\frac{d}{4}}$ for some constant $A=A(c)>0$. Since $d\leq 3$, we apply the Young inequality to obtain for any $\epsilon>0$,
		\[
		A[K(u,v)]^{\frac{d}{4}} \leq \epsilon K(u,v) + C(\epsilon,A).
		\]
		Thus
		\begin{align} \label{bou-kin}
		E(u,v) \geq \left(\frac{1}{2}-\epsilon\right) K(u,v) - C(\epsilon,A).
		\end{align}
		By choosing $0<\epsilon<\frac{1}{2}$, we see that $E(u,v) >-C(\epsilon,A)$. This shows that $J(c)>-\infty$. The proof for $J(c)<0$ is similar to the one for $I(a,b)<0$ given in Lemma $\ref{lem-neg}$. The boundeness in $H^1 \times H^1$ of any minimizing sequence for $J(c)$ follows similarly as in the proof of Item (1) of Lemma $\ref{lem-pro-min}$. The proof is complete.
	\end{proof}

	\begin{lemma} \label{lem-spl-pro}
		Let $d\leq 3$ and $c>0$. If $(u_n,v_n)_{n\geq 1}$ is a minimizing problem for $J(c)$, then there exist a subsequence, still denoted by $(u_n,v_n)_{n\geq 1}$, and a number $0<a <c$ such that 
		\[
		\lim_{n\rightarrow \infty} \|u_n\|^2_{L^2} =a, \quad \lim_{n\rightarrow \infty} E(u_n,v_n) = I\left(a,\frac{c-a}{2}\right).
		\]
		In particular, $J(c) = I\left(a,\frac{c-a}{2}\right)$.
	\end{lemma}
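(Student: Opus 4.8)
The plan is to reduce the single mass constraint $M(u,v)=c$ to the two-parameter family of problems $I(a,b)$, using that $M(u,v)=\|u\|^2_{L^2}+2\|v\|^2_{L^2}=c$ means precisely that, once $a:=\|u\|^2_{L^2}$ is fixed, $\|v\|^2_{L^2}$ is forced to equal $(c-a)/2$. Grouping the admissible pairs for $J(c)$ according to the value of $a$ then gives the framing identity
\[
J(c)=\inf_{0\le a\le c} I\!\left(a,\tfrac{c-a}{2}\right),
\]
so the real content is to show that a given minimizing sequence selects an \emph{interior} value $a\in(0,c)$ at which this outer infimum is attained in the limit.

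First I would extract the splitting parameter. By Lemma \ref{lem-neg-2} the minimizing sequence $(u_n,v_n)_{n\ge1}$ is bounded in $H^1\times H^1$, so $\|u_n\|^2_{L^2}$ is a bounded scalar sequence; passing to a subsequence I may assume $\|u_n\|^2_{L^2}\to a$ for some $a\in[0,c]$, and then $M(u_n,v_n)\to c$ forces $\|v_n\|^2_{L^2}\to (c-a)/2$. Next I would rule out the two degenerate endpoints $a=0$ and $a=c$. In either case one of the two $L^2$-masses tends to zero, and the Gagliardo--Nirenberg inequality \eqref{GN} together with $H^1$-boundedness makes the cubic term vanish: $|P(u_n,v_n)|\le \|u_n\|^2_{L^3}\|v_n\|_{L^3}\to 0$, since $\|u_n\|_{L^3}\to0$ whenever $\|u_n\|_{L^2}\to0$ (and symmetrically for $v_n$). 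Consequently $E(u_n,v_n)=\tfrac12 K(u_n,v_n)-P(u_n,v_n)$ would satisfy $\liminf_n E(u_n,v_n)\ge 0$, contradicting $J(c)<0$ from Lemma \ref{lem-neg-2}. Hence $0<a<c$, and in particular $\|u_n\|_{L^2}$ and $\|v_n\|_{L^2}$ stay bounded away from $0$ for large $n$.

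With $0<a<c$ established I would prove $J(c)=I(a,(c-a)/2)$ by two matching inequalities. For ``$\le$'', any competitor $(w,z)$ for $I(a,(c-a)/2)$ satisfies $M(w,z)=a+2\cdot\tfrac{c-a}{2}=c$, hence is admissible for $J(c)$, giving $J(c)\le I(a,(c-a)/2)$. For ``$\ge$'', I would rescale $(u_n,v_n)$ so that the two separate mass constraints hold exactly: set $\tilde u_n=\sqrt{a}\,\|u_n\|_{L^2}^{-1}\,u_n$ and $\tilde v_n=\sqrt{(c-a)/2}\,\|v_n\|_{L^2}^{-1}\,v_n$, so that $\|\tilde u_n\|^2_{L^2}=a$ and $\|\tilde v_n\|^2_{L^2}=(c-a)/2$, making $(\tilde u_n,\tilde v_n)$ a legitimate test sequence for $I(a,(c-a)/2)$. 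Since both scaling factors tend to $1$ and the quantities $\|\nabla u_n\|^2_{L^2}$, $\|\nabla v_n\|^2_{L^2}$, $P(u_n,v_n)$ are bounded, one checks that $E(\tilde u_n,\tilde v_n)-E(u_n,v_n)\to0$, whence $I(a,(c-a)/2)\le \lim_n E(\tilde u_n,\tilde v_n)=J(c)$. Combining the two bounds yields $J(c)=I(a,(c-a)/2)$, and since $E(u_n,v_n)\to J(c)$ this is exactly the claimed limit.

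The splitting identity and the endpoint exclusion are routine once $J(c)<0$ is in hand; the only step demanding care is the rescaling, where I must confirm that the two scaling factors genuinely converge to $1$ — which is precisely why the strict inequalities $0<a<c$ are essential, keeping $\|u_n\|_{L^2}$ and $\|v_n\|_{L^2}$ away from zero — and that the energy $E$ is stable under such vanishing perturbations of the normalization. This mild continuity argument is the main, though not severe, obstacle.
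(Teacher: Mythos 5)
Your proposal is correct and follows essentially the same route as the paper: extract the limit $a$ of $\|u_n\|^2_{L^2}$ by boundedness, exclude the endpoints $a=0$ and $a=c$ via the vanishing of $P$ and the strict negativity $J(c)<0$, and establish $J(c)=I(a,\tfrac{c-a}{2})$ by the trivial inclusion of constraint sets in one direction and the renormalization $\alpha_n=\sqrt{a}/\|u_n\|_{L^2}$, $\beta_n=\sqrt{(c-a)/2}/\|v_n\|_{L^2}$ (both tending to $1$) in the other. The only cosmetic difference is that the paper invokes the existence of a minimizer for $I(a,\tfrac{c-a}{2})$ from Theorem \ref{theo-exi-min} to get $J(c)\leq I(a,\tfrac{c-a}{2})$, whereas you obtain the same inequality directly from the infimum, which is a harmless simplification.
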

	\begin{proof}
		Since $M(u_n,v_n) \rightarrow c$ as $n\rightarrow \infty$, it follows that the sequence $(\|u_n\|^2_{L^2})_{n\geq 1}$ is bounded. Thus up to a subsequence, we can assume that $\|u_n\|^2_{L^2} \rightarrow a$ as $n\rightarrow \infty$ with $0\leq a \leq c$. Hence, $\|v_n\|^2_{L^2} \rightarrow \frac{c-a}{2}$ as $n\rightarrow \infty$. 
		
		We first claim that $a>0$. Suppose that $a=0$. Since $\|u_n\|^2_{L^2} \rightarrow 0$, the H\"older inequality, Sobolev embedding and the fact $(u_n,v_n)_{n\geq 1}$ is bounded in $H^1 \times H^1$ imply that 
		\[
		|P(u_n,v_n)| \rightarrow 0 \text{ as } n\rightarrow \infty.
		\]
		Thus
		\[
		J(c) = \lim_{n\rightarrow \infty} E(u_n,v_n) = \lim_{n\rightarrow \infty} \frac{1}{2} K(u_n,v_n) \geq 0,
		\]
		which is a contradiction to the fact $J(c)<0$. Similarly, we show $a<c$. 
		
		Finally, we show $J(c) = I\left(a,\frac{c-a}{2}\right)$. Let $(u,v)$ be a minimizer of $I\left(a,\frac{c-a}{2}\right)$ (This is possible due to Theorem $\ref{theo-exi-min}$). It follows that $\|u\|^2_{L^2} =a$ and $\|v\|^2_{L^2}=\frac{c-a}{2}$, hence $M(u,v) = c$. Thus
		\begin{align} \label{equ-pro-1}
		J(c) \leq E(u,v) =I\left(a,\frac{c-a}{2}\right).
		\end{align}
		We now show the inverse inequality. Since $0<a<c$, we see that $a$ and $\frac{c-a}{2}$ are both positive. We set 
		\[
		\alpha_n = \frac{\sqrt{a}}{\|u_n\|_{L^2}}, \quad \beta_n = \frac{\sqrt{\frac{c-a}{2}}}{\|v_n\|_{L^2}}.
		\]
		It follows that
		\[
		\|\alpha_n u_n\|^2_{L^2} =a, \quad \|\beta_n v_n\|^2_{L^2} = \frac{c-a}{2}.
		\]
		Note that the scallings $\alpha_n$ and $\beta_n$ tend to 1 as $n\rightarrow \infty$. Thus
		\[
		I\left(a,\frac{c-a}{2}\right) \leq \lim_{n\rightarrow \infty} E(\alpha_n u_n, \beta_n v_n) = \lim_{n\rightarrow \infty} E(u_n,v_n) = J(c).
		\]
		This combined with \eqref{equ-pro-1} imply that $J(c) = I\left(a,\frac{c-a}{2}\right)$. The proof is complete.
	\end{proof}

	We are now able to prove Theorem $\ref{theo-exi-min-J}$.
	
	\noindent \textit{Proof of Theorem $\ref{theo-exi-min-J}$.}
		Let $(u_n,v_n)_{n\geq 1}$ be a minimizing sequence for $J(c)$. By Lemma $\ref{lem-spl-pro}$, we see that there exists $0<a<c$ such that up to a subsequence, $(u_n,v_n)_{n\geq 1}$ is a minimizing sequence for $I\left(a,\frac{c-a}{2}\right)$. Since both $a$ and $\frac{c-a}{2}$ are positive, by Item (1) of Theorem $\ref{theo-exi-min}$, there exist $(y_n)_{n\geq 1} \subset \mathbb{R}^d$ and $(u,v) \in \mathcal{G}_{a,\frac{c-a}{2}}$ such that $(u_n(\cdot+y_n), v_n(\cdot+y_n))_{n\geq 1}$ has a subsequence converging to $(u,v)$ in $H^1 \times H^1$. Since $\|u\|^2_{L^2} = a, \|v\|^2_{L^2} = \frac{c-a}{2}$ and $E(u,v) = I\left(a,\frac{c-a}{2}\right) = J(c)$, it follows that
		\[
		M(u,v) = c, \quad E(u,v) = J(c).
		\]
		This implies that $(u,v)$ is a minimizer for $J(c)$ and hence $(u,v) \in \mathcal{M}_c$.
		
		Let us now prove \eqref{lit-gro-J-1}. Assume by contradiction that \eqref{lit-gro-J-1} is not true, then there exist a subsequence $(u_{n_k},v_{n_k})_{k\geq 1}$ of $(u_n,v_n)$ and a constant $\delta>0$ such that
		\begin{align} \label{lit-gro-J-pro}
		\lim_{n\rightarrow \infty} \inf_{(w,z) \in \mathcal{M}_c, y\in \mathbb{R}^d} \|(u_{n_k}(\cdot+y), v_{n_k}(\cdot+y)) - (w,z) \|_{H^1\times H^1 } \geq \delta.
		\end{align}
		Since $(u_{n_k},v_{n_k})_{k\geq 1}$ is still a minimizing sequence of $J(c)$, by Lemma \ref{lem-spl-pro}, there exists a number $0<a<c$ such that up to subsequence, $(u_{n_k}, v_{n_k})_{k\geq 1}$ is a minimizing sequence for $I\left(a,\frac{c-a}{2}\right)$ and $J(c) = I\left(a,\frac{c-a}{2}\right)$. By Item (1) of Theorem $\ref{theo-exi-min}$, there exist a sequence $(y_k)_{k\geq 1} \subset \mathbb{R}^d$ and $(g,h) \in \mathcal{G}_{a,\frac{c-a}{2}}$ such that up to subsequence,
		\[
		\lim_{n\rightarrow \infty} \|(u_{n_k}(\cdot+y_k), v_{n_k}(\cdot+y_k))- (g,h)\|_{H^1 \times H^1} =0.
		\]
		Now note that $(g,h) \in \mathcal{G}_{a,\frac{c-a}{2}}$  and $J(c) = I\left(a,\frac{c-a}{2}\right)$ imply that $(g,h) \in \mathcal{M}_c$. We thus get a contradiction to \eqref{lit-gro-J-pro}. 
		
		Item (3) follows from Item (2) since if $(w,z) \in \mathcal{M}_c$, then $(w(\cdot-y), z(\cdot-y)) \in \mathcal{M}_c$.
		
		We next prove Item (4). Since $(u,v)$ is a minimizer of $J(c)$, there exists a Lagrange multiplier $\omega \in \mathbb{R}$ such that
		\[
		H'(u,v)[\chi,\vartheta] =0, \quad \forall (\chi,\vartheta) \in C^\infty_0 \times C^\infty_0,
		\]
		where $H(u,v) = E(u,v) + \frac{\omega}{2} M(u,v)$. By the same calculations as in the proof of Theorem $\ref{theo-exi-min}$, we see that $(u,v)$ is a classical solution of \eqref{sys-ell-equ} with $\omega_2=2\omega_1=2\omega \in \mathbb{R}$. Before finishing the proof, we need the following result.
		\begin{lemma} \label{lem-pro-sol}
			Let $(\phi,\psi) \in H^1 \times H^1$ be a solution to \eqref{sys-ell-equ} with $\omega_2=2\omega_1=2\omega$. Then the following identities hold:
			\begin{align*}
			K(\phi,\psi) + \omega M(\phi,\psi) &= 3 P(\phi,\psi), & P(\phi,\psi) &=2 S_\omega(\phi,\psi), \\
			K(\phi,\psi) &= d S_\omega(\phi,\psi), & \omega M(\phi,\psi) &= (6-d) S_\omega(\phi,\psi).
			\end{align*}
		\end{lemma}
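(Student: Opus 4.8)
The plan is to combine two families of integral identities: Nehari-type relations, obtained by pairing the two equations of \eqref{sys-ell-equ} against the solution itself, and a Pohozaev-type (dilation) identity, obtained from the behaviour of the action $S_\omega$ under spatial scaling. Since $(\phi,\psi)$ is a classical solution of \eqref{sys-ell-equ} (by the same elliptic regularity argument used in the proof of Theorem \ref{theo-exi-min}) and, in fact, a critical point of $S_\omega$ on $H^1\times H^1$, both families of identities are available. The remaining work is then purely linear algebra among the four scalars $K$, $M$, $P$ and $S_\omega$.

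First I would establish the Nehari relations. Testing the first equation of \eqref{sys-ell-equ} against $\phi$ and the second against $\psi$ (admissible by density, since $(\phi,\psi)\in H^1\times H^1$ and the cubic term is controlled by \eqref{GN}) and taking real parts, while using $\omega_1=\omega$, $\omega_2=2\omega$ and $\mathrm{Re}\int\phi^2\overline{\psi}\,dx=\mathrm{Re}\int\psi\overline{\phi}^2\,dx=P(\phi,\psi)$, gives
\[
\|\nabla\phi\|^2_{L^2}+\omega\|\phi\|^2_{L^2}=2P(\phi,\psi),\qquad \kappa\|\nabla\psi\|^2_{L^2}+2\omega\|\psi\|^2_{L^2}=P(\phi,\psi).
\]
Adding these two relations yields at once the first identity $K(\phi,\psi)+\omega M(\phi,\psi)=3P(\phi,\psi)$. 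Substituting this into the definition $S_\omega=\tfrac12 K+\tfrac\omega2 M-P$ gives $S_\omega=\tfrac32 P-P=\tfrac12 P(\phi,\psi)$, i.e.\ the second identity $P=2S_\omega$.

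Next I would produce the missing dimensional relation by scaling. For $\lambda>0$ set $\phi_\lambda(x):=\phi(\lambda x)$ and $\psi_\lambda(x):=\psi(\lambda x)$; then $\|\nabla f_\lambda\|^2_{L^2}=\lambda^{2-d}\|\nabla f\|^2_{L^2}$, $\|f_\lambda\|^2_{L^2}=\lambda^{-d}\|f\|^2_{L^2}$ and $P(\phi_\lambda,\psi_\lambda)=\lambda^{-d}P(\phi,\psi)$, so
\[
g(\lambda):=S_\omega(\phi_\lambda,\psi_\lambda)=\tfrac12\lambda^{2-d}K(\phi,\psi)+\tfrac\omega2\lambda^{-d}M(\phi,\psi)-\lambda^{-d}P(\phi,\psi).
\]
Since $(\phi,\psi)$ is a critical point of $S_\omega$, one has $g'(1)=0$, which reads $\tfrac{2-d}{2}K-\tfrac{d}{2}\omega M+dP=0$; inserting $P=\tfrac13(K+\omega M)$ and simplifying yields $(6-d)K(\phi,\psi)=d\,\omega M(\phi,\psi)$. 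Combining this with $S_\omega=\tfrac16(K+\omega M)$ gives $K+\omega M=\tfrac6d K$, hence $K(\phi,\psi)=d\,S_\omega(\phi,\psi)$, and consequently $\omega M(\phi,\psi)=(6-d)S_\omega(\phi,\psi)$, completing the four identities.

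The only step needing care is the justification of $g'(1)=0$: the tangent vector $\partial_\lambda(\phi_\lambda,\psi_\lambda)\big|_{\lambda=1}=(x\cdot\nabla\phi,\,x\cdot\nabla\psi)$ must be an admissible direction, which follows from the smoothness and decay of the classical solution $(\phi,\psi)$. I expect this regularity/decay bookkeeping to be the main obstacle. A robust alternative that avoids differentiating along the scaling curve is to derive the dilation identity directly: multiply the two equations by $x\cdot\nabla\overline{\phi}$ and $x\cdot\nabla\overline{\psi}$, integrate over $B(0,R)$, integrate by parts, and let $R\to\infty$; the boundary terms vanish in the limit thanks to the decay of $(\phi,\psi)$, and one recovers the same Pohozaev relation $\tfrac{2-d}{2}K-\tfrac{d}{2}\omega M+dP=0$. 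Either route reduces the proof to the elementary linear combinations described above.
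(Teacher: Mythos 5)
Your proposal is correct and follows essentially the same route as the paper: the paper also pairs the two equations against $\overline{\phi}$ and $\overline{\psi}$ to get the Nehari relation $K+\omega M=3P$ (hence $P=2S_\omega$), and then derives the Pohozaev relation $\frac{d-2}{2}K+\frac{d\omega}{2}M=dP$ by the direct multiplier computation with $x\cdot\nabla\overline{\phi}$ and $x\cdot\nabla\overline{\psi}$ — exactly the ``robust alternative'' you describe — before finishing with the same linear algebra. Your primary route via $\frac{d}{d\lambda}S_\omega(\phi_\lambda,\psi_\lambda)\big|_{\lambda=1}=0$ is a standard equivalent formulation of that identity, so the difference is cosmetic.
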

		\begin{proof}
			Multiplying both sides of the first equation in \eqref{sys-ell-equ} with $\overline{\phi}$, integrating over $\mathbb{R}^d$ and taking the real part, we have
			\[
			\|\nabla \phi\|^2_{L^2} + \omega \|\phi\|_{L^2}^2 = 2 \text{Re} \int \overline{\psi} \phi^2 dx.
			\]
			Similarly, multiplying both sides of the second equation in \eqref{sys-ell-equ} with $\overline{\psi}$, integrating over $\mathbb{R}^d$ and taking the real part, we get
			\[
			\frac{1}{2} \|\nabla \psi\|^2_{L^2} + 2\omega \|\psi\|^2_{L^2} = \text{Re} \int \overline{\psi} \phi^2 dx.
			\]
			Adding these two equalities, we obtain $K(\phi,\psi) + \omega M(\phi,\psi) = 3 P(\phi,\psi)$. Using this identity together with the fact $S_\omega(\phi,\psi) = \frac{1}{2}(K(\phi,\psi) + \omega M(\phi,\psi)) - P(\phi,\psi)$, it yields that $P(\phi,\psi) = 2 S_\omega(\phi,\psi)$. Multiplying both sides of the first equation with $x\cdot \nabla \overline{\phi}$, integrating over $\mathbb{R}^d$ and taking the real part, we have
			\[
			-\text{Re} \int \Delta \phi x \cdot \nabla \overline{\phi} dx + \omega \text{Re} \int \phi x \cdot \nabla \overline{\phi} dx = 2 \text{Re} \int \psi \overline{\phi} x \cdot \nabla \overline{\phi} dx.
			\]
			A direct calculation shows that
			\begin{align*}
			\text{Re} \int \Delta \phi x \cdot \nabla \overline{\phi} dx &= \frac{d-2}{2} \|\nabla \phi\|^2_{L^2}, \\
			\text{Re} \int \phi x \cdot \nabla \overline{\phi} dx &= -\frac{d}{2} \|\phi\|^2_{L^2}, \\
			\text{Re} \int \psi \overline{\phi} x \cdot \nabla \overline{\phi} dx &= - \frac{d}{2} \text{Re} \int \overline{\psi} \phi^2 dx - \frac{1}{2} \text{Re} \int \phi^2 x \cdot \nabla \overline{\psi} dx.
			\end{align*}
			We thus get
			\begin{align} \label{poh-ide-1}
			-\frac{d-2}{2} \|\nabla \phi\|^2_{L^2} -\frac{d\omega}{2} \|\phi\|^2_{L^2} = -d \text{Re} \int \overline{\psi} \phi^2 dx - \text{Re} \int \phi^2 x \cdot \nabla \overline{\psi} dx.
			\end{align}
			By the same argument, multiplying both sides of the second equation in \eqref{sys-ell-equ} with $x \cdot \nabla \overline{\psi}$, integrating over $\mathbb{R}^d$ and taking the real part, we get
			\begin{align} \label{poh-ide-2}
			-\frac{d-2}{4} \|\nabla \psi\|^2_{L^2} - d \omega \|\psi\|^2_{L^2} = \text{Re} \int \phi^2 x \cdot \nabla \overline{\psi} dx.
			\end{align}
			Adding \eqref{poh-ide-1} and \eqref{poh-ide-2} together, we obtain
			\[
			\frac{d-2}{2} K(\phi,\psi) + \frac{d\omega}{2} M(\phi,\psi) = d P(\phi,\psi).
			\]
			This identity together with the definition of $S_\omega(\phi,\psi)$ imply that $K(\phi,\psi) = d S_\omega(\phi,\psi)$. The last identity also yields that $\omega M(\phi,\psi) = (6-d) S_\omega(\phi,\psi)$. The proof is complete.
		\end{proof}
		We now continue the proof of Item (4) by showing that $\omega>0$. Since $(u,v)$ is a solution of \eqref{sys-ell-equ} with $\omega_2=2\omega_1=2\omega$, we have the following identities
		\[
		K(u,v) = d S_\omega(u,v), \quad \omega M(u,v)=(6-d) S_\omega(u,v).
		\]
		This implies $\omega>0$ since $d\leq 3$ and $K(u,v), M(u,v), S_\omega(u,v)$ are all positive. The rest of Item (4) follows exactly the same as in the proof of Item (4) in Theorem $\ref{theo-exi-min}$. 
		
		Finally, we show that each $(u,v) \in \mathcal{M}_c$ is a ground state for \eqref{sys-ell-equ} with $\omega_2=2\omega_1=2\omega>0$. Let $(\tilde{u},\tilde{v})$ be a solution of \eqref{sys-ell-equ} with $\omega_2=2\omega_1=2\omega>0$. Our goal is to show
		\[
		S_\omega(u,v) \leq S_\omega(\tilde{u},\tilde{v}).
		\]
		Assume by contradiction that $S_\omega(\tilde{u},\tilde{v})< S_\omega(u,v)$. Since $(\tilde{u},\tilde{v})$ is a solution of \eqref{sys-ell-equ} with $\omega_2=2\omega_1=2\omega>0$, we have the following identities
		\[
		K(\tilde{u},\tilde{v})=d S_\omega(\tilde{u},\tilde{v}), \quad \omega M(\tilde{u},\tilde{v})=(6-d) S_\omega(\tilde{u},\tilde{v}), \quad P(\tilde{u},\tilde{v})= 2 S_\omega(\tilde{u},\tilde{v}).
		\]
		In particular, we have
		\[
		E(\tilde{u},\tilde{v}) = \left(\frac{d}{2}-2\right) S_\omega(\tilde{u},\tilde{v}) = \left(\frac{d}{2}-2\right) \frac{\omega}{6-d} M(\tilde{u},\tilde{v}).
		\]
		Of course, similar identities hold for $(u,v)$ since $(u,v)$ is also a solution of \eqref{sys-ell-equ} with $\omega_2=2\omega_1=2\omega>0$. Now set
		\[
		\gamma := \left(\frac{c}{M(\tilde{u},\tilde{v})}\right)^{\frac{1}{4-d}}
		\]
		and define
		\[
		w(x) := \gamma^2 \tilde{u}(\gamma x), \quad z(x):= \gamma^2 \tilde{v}(\gamma x).
		\]
		It is easy to check that
		\[
		M(w,z)= \gamma^{4-d} M(\tilde{u},\tilde{v})=c.
		\]
		Since $(u,v) \in \mathcal{M}_c$, we have
		\begin{align*}
		\left(\frac{d}{2}-2\right) \frac{\omega}{6-d} c = E(u,v) \leq E(w,z) = \gamma^{6-d} E(\tilde{u},\tilde{v})= \gamma^{6-d} \left(\frac{d}{2}-2\right) \frac{\omega}{6-d} M(\tilde{u},\tilde{v}).
		\end{align*}
		This implies that $c\geq \gamma^{6-d} M(\tilde{u},\tilde{v})$ since $d\leq 3$. We thus get
		\[
		\gamma^{6-d} \leq \frac{c}{M(\tilde{u},\tilde{v})} = \gamma^{4-d} \text{ or } \gamma \leq 1.
		\]
		On the other hand, 
		\[
		\frac{\omega}{6-d} M(\tilde{u},\tilde{v}) = S_\omega(\tilde{u},\tilde{v})< S_\omega(u,v) = \frac{\omega}{6-d} c.
		\]
		Hence $M(\tilde{u},\tilde{v}) <c$ or $\gamma>1$ which is absurd. The proof of Item (5) is now complete.
	\hfill $\Box$

	We now prove the orbital stability of standing waves for \eqref{wor-Syst} given in Theorem $\ref{theo-sta}$. 
	
	\noindent \textit{Proof of Theorem $\ref{theo-sta}$.}
			Assume by contradiction that the claim is not true. Then there exists $\epsilon_0>0$ and a sequence of initial data $(u_{0,n}, v_{0,n})_{n\geq 1}$ such that
			\begin{align} \label{sta-pro-1}
			\inf_{(w,z) \in \mathcal{M}_c} \|(u_{0,n},v_{0,n})-(w,z)\|_{H^1 \times H^1} < \frac{1}{n},
			\end{align}
			and there exists a time sequence $(t_n)_{n\geq 1} \subset [0,+\infty)$ such that the corresponding solution sequence $(u_n(t_n), v_n(t_n))_{n\geq 1}$ of \eqref{wor-Syst} satisfies
			\begin{align} \label{sta-pro-2}
			\inf_{(w,z) \in \mathcal{M}_c} \|(u_n(t_n), v_n(t_n))-(w,z)\|_{H^1\times H^1} \geq \epsilon_0.
			\end{align}
			By \eqref{sta-pro-1} and the conservation of mass, we have
			\[
			M(u_n(t_n), v_n(t_n)) = M(u_{0,n}, v_{0,n}) \rightarrow M(w,z)=c \text{ as } n\rightarrow \infty.
			\]
			On the other hand, since $(u_{0,n},v_{0,n})_{n\geq 1} \rightarrow (w,z)$ in $H^1 \times H^1$ as $n\rightarrow \infty$, we have
			\begin{align*}
			\lim_{n\rightarrow \infty} E(u_{0,n}, v_{0,n}) &= \lim_{n\rightarrow \infty} \frac{1}{2} K(u_{0,n},v_{0,n}) - \text{Re} (\langle v_{0,n}, (u_{0,n})^2 \rangle) \\
			&= \frac{1}{2} K(w,z) - \text{Re} (\langle z, w^2 \rangle) = E(w,z).
			\end{align*}
			By the conservation of energy, we get
			\[
			E(u_n(t_n), v_n(t_n)) = E(u_{0,n}, v_{0,n}) \rightarrow E(w,z) = J(c).
			\]
			Thus, $(u_n(t_n), v_n(t_n))_{n\geq 1}$ is a minimizing sequence for $J(c)$. By Item (1) of Theorem $\ref{theo-exi-min-J}$, there exist a sequence $(y_n)_{n\geq 1} \subset \mathbb{R}^d$ and $(g,h) \in \mathcal{M}_c$ such that
			\[
			\lim_{n\rightarrow\infty} \|(u_n(t_n, \cdot+y_n), v_n(t_n, \cdot+y_n))-(g,h)\|_{H^1\times H^1} =0.
			\]
			Since $(g(\cdot-y_n), h(\cdot-y_n)) \in \mathcal{M}_c$, we see that for sufficiently large $n$,
			\[
			\inf_{(w,z) \in \mathcal{M}_c} \|(u_n(t_n),v_n(t_n))-(w,z)\|_{H^1 \times H^1} < \epsilon_0,
			\]
			which contradicts to \eqref{sta-pro-2}. The proof is complete.
		\hfill $\Box$

	\section{Existence and characterization of blow-up solutions}
	\label{S:2}
	\subsection{Existence of blow-up solutions}
	\label{Sub:2.1}
	In this subsection, we study the existence of blow-up solutions to \eqref{mas-res-Syst} with $d=4$ and $\kappa=\frac{1}{2}$. In \cite{HOT}, Hayashi-Ozawa-Takana showed the existence of finite time blow-up solutions in the case $E(u_0,v_0)<0$ and $(u_0, v_0) \in \Sigma \times \Sigma$, where $\Sigma = H^1 \cap L^2(|x|^2 dx)$. Moreover, they pointed out explicit solutions which blows up in finite time and has mass equal to the mass of the ground state. Note that in this case the enery is zero. Our goal is to prove that for initial data in $H^1 \times H^1$ (not necessary finite variance or radially symmetric) with negative energy, then the corresponding solution either blows up in finite time or blows up infinite time. Moreover, one can rule out the infinite time blow-up by considering additionally initial data has finite variance (i.e. $(xu_0,xv_0) \in L^2 \times L^2$) or is radially symmetric.
	
	To do so, we need some virial estimates related to \eqref{mas-res-Syst}. 
	Given a real-valued function $\chi$, we define the virial potential $V_\chi$ associated to \eqref{mas-res-Syst} by
	\[
	V_\chi(t):= \int \chi(x) (|u(t,x)|^2 + 2 |v(t,x)|^2) dx.
	\]
	We have the following virial identity related to \eqref{mas-res-Syst}. 
	\begin{lemma} \label{lem-vir-ide-chi}
		Let $d=4$ and $\kappa=\frac{1}{2}$. Let $(u(t),v(t))$ be a solution to \eqref{mas-res-Syst}. Then it holds that
		\begin{align} \label{vir-ide-chi-1}
		\frac{d}{dt} V_\chi(t) = 2 \int \nabla \chi \cdot \emph{Im}( \nabla u(t) \overline{u}(t) + \nabla v(t) \overline{v}(t) ) dx,
		\end{align}
		and 
		\begin{align} \label{vir-ide-chi-2}
		\frac{d^2}{dt^2} V_\chi(t) = -\int \Delta^2 \chi \left( |u(t)|^2 +\frac{1}{2} |v(t)|^2 \right) dx &+ \sum_{jk} \emph{Re} \int \partial^2_{jk} \chi \left(4 \partial_j u(t) \partial_k \overline{u}(t) + 2 \partial_j v(t) \partial_k \overline{v}(t) \right) dx \nonumber \\
		&- 2 \emph{Re} \int \Delta \chi \overline{v} (t) u^2(t) dx. 
		\end{align}
	\end{lemma}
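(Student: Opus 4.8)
The plan is to obtain both identities by differentiating $V_\chi$ under the integral sign, substituting the equations \eqref{mas-res-Syst} for the time derivatives, and integrating by parts; the whole content is an algebraic bookkeeping of the resulting terms, the only genuinely delicate point being the regularity needed to justify the manipulations.

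First I would compute $\frac{d}{dt}V_\chi$. Since $\partial_t|u|^2 = 2\,\text{Re}(\overline u\,\partial_t u)$ and likewise for $v$, substituting $\partial_t u = i(\Delta u + 2v\overline u)$ and $\partial_t v = i(\tfrac12\Delta v + u^2)$ and using $\text{Re}(iz)=-\text{Im}(z)$ gives
\[
\partial_t\big(|u|^2 + 2|v|^2\big) = -2\,\text{Im}(\overline u\,\Delta u) - 2\,\text{Im}(\overline v\,\Delta v) - 4\,\text{Im}(v\overline u^2) - 4\,\text{Im}(\overline v u^2).
\]
The two nonlinear terms cancel, because $\overline v u^2 = \overline{v\overline u^2}$ forces $\text{Im}(\overline v u^2) = -\text{Im}(v\overline u^2)$; this is precisely the algebraic structure behind the conservation of $M(u,v)$. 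Then, using $\text{Im}(\overline u\,\Delta u) = \nabla\cdot\text{Im}(\overline u\nabla u)$ (and the same for $v$) and integrating by parts against $\chi$, I recover \eqref{vir-ide-chi-1}, after noting $\nabla u\,\overline u = \overline u\,\nabla u$.

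For the second identity I would differentiate \eqref{vir-ide-chi-1}, written as $\frac{d}{dt}V_\chi = 2\sum_j\int\partial_j\chi\,\text{Im}(\overline u\partial_j u + \overline v\partial_j v)\,dx$, and again substitute the equations, splitting each $\partial_t u,\partial_t v$ into its dispersive and nonlinear parts. The dispersive parts give the classical free-Schr\"odinger virial contributions; after integrating by parts twice they produce, for $u$ (dispersion $1$, weight $1$), the terms $4\,\text{Re}\sum_{jk}\int\partial^2_{jk}\chi\,\partial_j u\,\partial_k\overline u\,dx - \int\Delta^2\chi\,|u|^2\,dx$, and for $v$ (dispersion $\tfrac12$, weight $2$) the terms $2\,\text{Re}\sum_{jk}\int\partial^2_{jk}\chi\,\partial_j v\,\partial_k\overline v\,dx - \tfrac12\int\Delta^2\chi\,|v|^2\,dx$; together these are the first two groups of terms in \eqref{vir-ide-chi-2}. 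The nonlinear parts, after using $u\partial_j u = \tfrac12\partial_j(u^2)$ and the same conjugation identities as above, collapse into the perfect gradient $\sum_j\partial_j\chi\,\partial_j\!\big(\text{Re}(\overline v u^2)\big)$, and one last integration by parts converts this into $-2\,\text{Re}\int\Delta\chi\,\overline v\,u^2\,dx$. A convenient consistency check is $\chi=|x|^2$: then $\Delta^2\chi=0$ and the kinetic part is $8K(u,v)$ while the nonlinear part is $-16\,\text{Re}\langle v,u^2\rangle$, so the whole expression reduces to the Glassey identity $\frac{d^2}{dt^2}\int|x|^2(|u|^2+2|v|^2)\,dx = 16\,E(u,v)$, as expected for the $L^2$-critical case $d=4$.

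The main obstacle is regularity: these formal computations involve $\Delta u$, $\partial_j\Delta u$ and analogous expressions that are not defined for merely $H^1$ solutions, so the identities must be justified by approximation. I would first establish \eqref{vir-ide-chi-1} and \eqref{vir-ide-chi-2} for smooth (say Schwartz) solutions, where all the differentiations under the integral and integrations by parts are legitimate, and then pass to the limit using the local well-posedness and continuous dependence of Proposition \ref{prop-lwp-wor}, or equivalently by mollifying the data and controlling the resulting commutator errors. The algebraic tracking of the nonlinear terms—first their cancellation in $\frac{d}{dt}V_\chi$ and then their recombination into a single gradient in $\frac{d^2}{dt^2}V_\chi$—is the computational heart of the argument and the step most prone to sign errors.
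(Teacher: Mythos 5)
Your computation matches the paper's proof: both differentiate $V_\chi$ under the integral, substitute the equations, exploit the cancellation $\mathrm{Im}(\overline{v}u^2)=-\mathrm{Im}(v\overline{u}^2)$ for the first identity, and integrate by parts to reach \eqref{vir-ide-chi-2}, with the paper likewise noting that the rigorous justification requires a regularization procedure. Your bookkeeping of the coefficients (dispersion $\tfrac12$ against weight $2$ for $v$) and the recombination of the nonlinear terms into $\nabla\bigl(\mathrm{Re}(\overline{v}u^2)\bigr)$ are correct, and the $\chi=|x|^2$ consistency check against the Glassey identity is sound.
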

	
	\begin{proof}
		We only make a formal calculation. The rigorous proof requires a regularization procedure. Since $(u(t),v(t))$ is a solution to \eqref{mas-res-Syst}, we have
		\begin{align*}
		\frac{d}{dt} V_\chi(t) &= 2 \int \chi \text{Re}(\partial_t u \overline{u} + 2 \partial_t v \overline{v}) dx \\
		&= -2 \int \chi \text{Im}(\Delta u +2 v \overline{u}) \overline{u} + (\Delta v + 2 u^2) \overline{v}) dx \\
		&= -2 \int \chi \text{Im}(\Delta u \overline{u} + \Delta v \overline{v}) dx \\
		&= 2 \int \nabla \chi \cdot \text{Im}(\nabla u \overline{u} + \nabla v \overline{v}) dx.
		\end{align*}
		We next have
		\begin{align*}
		\frac{d^2}{dt^2} V_\chi(t) &= 2 \int \nabla \chi \cdot  \text{Im} \left( \nabla \partial_t u \overline{u} + \nabla u \partial_t \overline{u} + \nabla \partial_t v  \overline{v} + \nabla v \partial_t \overline{v} \right) dx \\
		&= 2 \int \nabla \chi \cdot \text{Re} \left( \nabla (\Delta u + 2 v \overline{u}) \overline{u} - \nabla u (\Delta \overline{u} + 2 \overline{v} u) \right) dx \\
		&\mathrel{\phantom{=}} + \int \nabla \chi \cdot \text{Re} \left( \nabla (\Delta v + 2 u^2) \overline{v} - \nabla v(\Delta \overline{v} + 2(\overline{u})^2)\right) dx.
		\end{align*}
		A direct computation shows that
		\begin{align*}
		\int \nabla \chi \cdot \text{Re} (\nabla (\Delta u + 2 v \overline{u}) \overline{u}) dx &= \sum_{j} \text{Re} \int \partial_j \chi \partial_j (\Delta u + 2 v \overline{u}) \overline{u} dx \\
		&= -\sum_{j} \text{Re} \int \partial^2_j \chi(\Delta u + 2 v \overline{u}) \overline{u} + \partial_j \chi \partial_j \overline{u} (\Delta u + 2 v \overline{u})  dx \\
		&= - \text{Re} \int \Delta \chi \Delta u \overline{u} +2 \Delta \chi \overline{v} u^2 + \nabla \chi \cdot \nabla u (\Delta \overline{u} + 2 \overline{v} u) dx.
		\end{align*}
		We also have
		\begin{align*}
		\text{Re} \int \Delta \chi \Delta u \overline{u} dx &= \sum_{jk} \text{Re} \int \partial^2_j \chi \partial^2_k u \overline{u} dx \\
		&= -\sum_{jk} \text{Re} \int \partial^3_{jjk} \chi \partial_k u \overline{u} + \partial^2_j \chi \partial_k u \partial_k \overline{u} dx \\
		&= \sum_{jk} \text{Re} \int \partial^4_{jjkk} \chi |u|^2 + \partial^3_{jjk} \chi u \partial_k \overline{u} dx - \int \Delta \chi |\nabla u|^2 dx \\
		&= \frac{1}{2} \int \Delta^2 \chi |u|^2 dx - \int \Delta \chi |\nabla u|^2 dx.
		\end{align*}
		Thus
		\begin{align*}
		\int \nabla \chi \cdot \text{Re}( \nabla(\Delta u + 2 v \overline{u}) \overline{u}) dx = -\frac{1}{2} \int \Delta^2 \chi |u|^2 + \int \Delta \chi |\nabla u|^2 dx &- 2 \text{Re} \int \Delta \chi \overline{v} u^2 dx \\
		&- \text{Re} \int \nabla \chi \cdot \nabla u (\Delta \overline{u} + 2 \overline{v} u) dx.
		\end{align*}
		Similarly,
		\begin{align*}
		\int \nabla \chi \cdot \text{Re}(\nabla (\Delta v + 2 u^2) \overline{v}) dx = -\frac{1}{2} \int \Delta^2 \chi |v|^2 dx + \int \Delta \chi |\nabla v|^2 &- 2 \text{Re} \int \Delta \chi \overline{v} u^2 dx \\
		&- \text{Re} \int \nabla \chi \cdot \nabla v (\Delta \overline{v} + 2 (\overline{u})^2) dx.
		\end{align*}
		We get
		\begin{align} \label{vir-ide-chi-pro-1}
		\frac{d^2}{dt^2} V_\chi(t)&= - \int \Delta^2 \chi \left(|u|^2 + \frac{1}{2} |v|^2\right) dx + 2 \int \Delta \chi \left( |\nabla u|^2 + \frac{1}{2} |\nabla v|^2 \right) dx - 6 \text{Re} \int \Delta \chi \overline{v} u^2 dx \nonumber\\
		&\mathrel{\phantom{=}}- 4 \text{Re} \int \nabla \chi \cdot \nabla u (\Delta \overline{u} + 2 \overline{v} u) dx - 2 \text{Re} \int \nabla \chi \cdot \nabla v ( \Delta \overline{v} + 2 (\overline{u})^2 ) dx.
		\end{align}
		Now
		\begin{align} \label{vir-ide-chi-pro-2}
		\text{Re} \int \nabla \chi \cdot \nabla u (\Delta \overline{u} + 2 \overline{v} u) dx &= \text{Re} \int \nabla \chi \cdot \nabla u \Delta \overline{u} dx + 2 \text{Re} \int \nabla \chi \cdot \nabla u \overline{v} u dx \nonumber \\
		&= -\sum_{jk} \text{Re} \int \partial^2_{jk} \chi \partial_j u \partial_k \overline{u} dx + \frac{1}{2}\int \Delta \chi |\nabla u|^2 dx +2 \text{Re}\int \nabla \chi \cdot \nabla u \overline{v} u dx.
		\end{align}
		Similarly,
		\begin{align} \label{vir-ide-chi-pro-3}
		\text{Re} \int \nabla \chi \cdot \nabla v ( \Delta \overline{v} + 2 (\overline{u})^2) dx = - \sum_{jk} \text{Re} \int \partial^2_{jk}\chi \partial_j v \partial_k \overline{v} dx &+ \frac{1}{2} \int \Delta \chi |\nabla v|^2 dx - 2 \text{Re}\int \Delta \chi \overline{v} u^2 dx \nonumber\\
		&- 4 \text{Re} \int \nabla \chi \cdot \nabla u \overline{v} u dx.
		\end{align}
		Combining \eqref{vir-ide-chi-pro-1}, \eqref{vir-ide-chi-pro-2} and \eqref{vir-ide-chi-pro-3}, we prove the result.
	\end{proof}
	
	In order to prove the existence of finite time blow-up solutions for \eqref{mas-res-Syst} with radial data, we need localized virial estimates. To do so, we introduce a function $\theta: [0,\infty) \rightarrow [0,\infty)$ satisfying
	\begin{align} \label{def-the}
	\theta(r) = \left\{
	\begin{array}{cl}
	r^2 & \text{if } 0 \leq r \leq 1, \\
	\text{2} &\text{if } r \geq 2, 
	\end{array}
	\right. \quad \text{and} \quad \theta''(r) \leq 2 \text{ for } r \geq 0. 
	\end{align}
	For $R>0$, we define the radial function
	\begin{align} \label{def-chi-R}
	\chi_R(x) = \chi_R(r) := R^2 \theta(r/R), \quad r=|x|.
	\end{align}
	We see easily that
	\begin{align} \label{pro-chi-R}
	2- \chi''_R(r) \geq 0, \quad 2 - \frac{\chi'_R(r)}{r} \geq 0, \quad 8 - \Delta \chi_R(x) \geq 0.
	\end{align}
	\begin{lemma} \label{lem-loc-vir-est}
		Let $d=4$ and $\kappa=\frac{1}{2}$. Let $R>0$ and $\chi_R$ be as in \eqref{def-chi-R}. Let $(u(t),v(t))$ be a radial solution to \eqref{mas-res-Syst} defined on the maximal time interval $[0,T)$. Then for any $\epsilon>0$ and any $t \in [0,T)$,
		\begin{align} \label{loc-vir-est} 
		\frac{d^2}{dt^2} V_{\chi_R}(t) \leq 16 E(u(t),v(t)) &- 4 \int (\chi_{1,R} - C\epsilon \chi_{2,R}^2) |\nabla u(t)|^2 dx \nonumber \\
		& -2 \int (\chi_{1,R} - C\epsilon \chi_{2,R}^2) |\nabla v(t)|^2 dx + O(R^{-2} + \epsilon R^{-2} + \epsilon^{-1/3} R^{-2}),
		\end{align}
		for some constant $C>0$, where 
		\begin{align}  \label{def-chi-12-R}
		\chi_{1,R}= 2-\chi''_R, \quad \chi_{2,R} = 8-\Delta \chi_R.
		\end{align}
		The implicit constant depends only on the conserved mass $M(u,v)$.
	\end{lemma}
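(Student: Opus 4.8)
The plan is to start directly from the formal virial identity \eqref{vir-ide-chi-2} of Lemma \ref{lem-vir-ide-chi} and to exploit radiality to collapse the Hessian term. Since $u(t),v(t)$ are radial we have $\partial_j u = u'(r)\tfrac{x_j}{r}$, while for the radial weight $\partial^2_{jk}\chi_R = \tfrac{\chi'_R}{r}\left(\delta_{jk} - \tfrac{x_jx_k}{r^2}\right) + \chi''_R\tfrac{x_jx_k}{r^2}$; contracting, the angular part (coefficient $\chi'_R/r$) drops out and one gets $\sum_{jk}\partial^2_{jk}\chi_R\,\partial_j u\,\partial_k\overline u = \chi''_R|\nabla u|^2$, and likewise for $v$. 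Substituting into \eqref{vir-ide-chi-2} yields
\[
\frac{d^2}{dt^2}V_{\chi_R}(t) = -\int\Delta^2\chi_R\left(|u|^2 + \tfrac12|v|^2\right)dx + 4\int\chi''_R|\nabla u|^2\,dx + 2\int\chi''_R|\nabla v|^2\,dx - 2\,\text{Re}\int\Delta\chi_R\,\overline v\,u^2\,dx.
\]

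Next I would recenter every term against its flat value (the case $\chi_R(x)=|x|^2$), which reproduces $16E$. Writing $\chi''_R = 2-\chi_{1,R}$ and $\Delta\chi_R = 8-\chi_{2,R}$ as in \eqref{def-chi-12-R}, and recalling $16E = 8\|\nabla u\|^2_{L^2}+4\|\nabla v\|^2_{L^2}-16\,\text{Re}\int\overline v u^2\,dx$, this produces the exact identity
\[
\frac{d^2}{dt^2}V_{\chi_R}(t) = 16E(u,v) - \int\Delta^2\chi_R\left(|u|^2 + \tfrac12|v|^2\right)dx - 4\int\chi_{1,R}|\nabla u|^2\,dx - 2\int\chi_{1,R}|\nabla v|^2\,dx + 2\,\text{Re}\int\chi_{2,R}\,\overline v\,u^2\,dx.
\]
The two terms carrying $\chi_{1,R}\ge 0$ are already of the desired form and are kept untouched. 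Since $\chi_R = R^2\theta(r/R)$ satisfies $\|\Delta^2\chi_R\|_{L^\infty}=O(R^{-2})$ and $\Delta^2\chi_R$ is supported in $\{R\le r\le 2R\}$, the biharmonic term is $O(R^{-2})$ once the conserved mass is used to bound $\int(|u|^2+\tfrac12|v|^2)$.

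The heart of the argument is to absorb the nonlinear remainder $2\,\text{Re}\int\chi_{2,R}\overline v u^2\,dx$ into the permitted quantities $C\epsilon\int\chi^2_{2,R}(4|\nabla u|^2+2|\nabla v|^2)\,dx$ plus lower order. First I would estimate $\big|2\,\text{Re}\int\chi_{2,R}\overline v u^2\big|\le 2\int\chi_{2,R}|v||u|^2\lesssim\int\chi_{2,R}(|u|^3+|v|^3)\,dx$ by Young's inequality, noting $\chi_{2,R}$ is supported in $\{r\ge R\}$ with $0\le\chi_{2,R}\le 8$. Then I would invoke the radial (Strauss) inequality $\|f\|^2_{L^\infty(r\ge R)}\lesssim R^{-3}\|f\|_{L^2(r\ge R)}\|\nabla f\|_{L^2(r\ge R)}$ valid in $d=4$, which gives $\int_{r\ge R}|u|^3\,dx\le\|u\|_{L^\infty(r\ge R)}\|u\|^2_{L^2}\lesssim R^{-3/2}\|u\|^{5/2}_{L^2}\|\nabla u\|^{1/2}_{L^2(r\ge R)}$. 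Controlling $\|u\|_{L^2}$ by the conserved mass and applying Young with exponents $4$ and $4/3$ to $\|\nabla u\|^{1/2}_{L^2(r\ge R)}=(\|\nabla u\|^2_{L^2(r\ge R)})^{1/4}$ produces $\le\epsilon\,(\text{weighted gradient})+C\epsilon^{-1/3}R^{-2}$, since $(R^{-3/2})^{4/3}=R^{-2}$; treating $|v|^3$ identically and collecting terms yields the remainder $O(R^{-2}+\epsilon R^{-2}+\epsilon^{-1/3}R^{-2})$.

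The hard part will be the weighted bookkeeping in this last step. The Strauss inequality naturally delivers $\|\nabla u\|_{L^2(r\ge R)}$ over the whole exterior, whereas \eqref{loc-vir-est} requires the weight $\chi^2_{2,R}$, which degenerates in the transition annulus $\{R\le r\le 2R\}$. I would resolve this by splitting the exterior integral: on $\{r\ge 2R\}$ one has $\chi_{2,R}\equiv 8$, so $\int_{r\ge 2R}|\nabla u|^2$ and $\int\chi^2_{2,R}|\nabla u|^2$ are comparable; on the annulus, the contribution is folded into the $O(\epsilon R^{-2})$ error using the mass-controlled boundedness of the kinetic energy together with the $O(R^{-2})$ smallness furnished by the localization. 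Combining the exact identity with the $O(R^{-2})$ biharmonic bound and the Young-type control of the nonlinear remainder then gives \eqref{loc-vir-est}.
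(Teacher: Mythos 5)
Your derivation of the exact identity is the same as the paper's: you start from \eqref{vir-ide-chi-2}, collapse the Hessian term to $\chi''_R|\nabla u|^2$ using radiality, recenter with $\chi''_R=2-\chi_{1,R}$ and $\Delta\chi_R=8-\chi_{2,R}$ to produce $16E$, and bound the biharmonic term by $O(R^{-2})$ via mass conservation. All of that is correct. The gap is in the absorption of the nonlinear remainder $2\,\mathrm{Re}\int\chi_{2,R}\overline v u^2\,dx$. You apply the Strauss inequality to the \emph{unweighted} functions $u,v$, which delivers $\|\nabla u\|_{L^2(r\ge R)}$ over the whole exterior, and you then propose to fold the contribution of the transition annulus $\{R\le r\le 2R\}$ (where $\chi_{2,R}$ degenerates, so the weighted gradient $\int\chi_{2,R}^2|\nabla u|^2$ gives no control) into the $O(\epsilon R^{-2})$ error ``using the mass-controlled boundedness of the kinetic energy.'' This is the step that fails: the kinetic energy is \emph{not} controlled by the conserved mass — the lemma is applied precisely to solutions whose kinetic energy blows up, and the implicit constant in \eqref{loc-vir-est} is required to depend only on $M(u,v)$. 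There is likewise no lower bound on $\chi_{1,R}$ in the annulus to absorb an unweighted gradient there ($\chi_{1,R}$ vanishes at $r=R$ for the admissible cutoffs, cf.\ the choice made in the proof of Theorem \ref{theo-blo-rad}).

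The paper's proof sidesteps this by applying the radial Sobolev inequality to $f=\chi_{2,R}v$ rather than to $v$: then
\[
\Bigl|\mathrm{Re}\int\chi_{2,R}\overline v u^2\,dx\Bigr|\le R^{-3/2}\Bigl(\sup_{|x|>R}|x|^{3/2}|\chi_{2,R}v|\Bigr)\|u\|_{L^2}^2\lesssim R^{-3/2}\|\nabla(\chi_{2,R}v)\|_{L^2}^{1/2},
\]
and after Young's inequality the gradient that must be absorbed is $\epsilon\|\nabla(\chi_{2,R}v)\|_{L^2}^2\le \epsilon\|\chi_{2,R}\nabla v\|_{L^2}^2+\epsilon\|(\nabla\chi_{2,R})v\|_{L^2}^2$, which is exactly the weighted quantity plus a commutator controlled by $\epsilon R^{-2}M(u,v)$ since $\|\nabla\chi_{2,R}\|_{L^\infty}\lesssim R^{-1}$. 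Note also that the paper does not pass through $|v||u|^2\lesssim|u|^3+|v|^3$; it keeps the product structure $\sup|\chi_{2,R}v|\cdot\|u\|_{L^2}^2$ (and symmetrically $\sup|\chi_{2,R}u|\cdot\|u\|_{L^2}\|v\|_{L^2}$) so that two of the three factors are handled by mass conservation alone. To repair your argument, replace the unweighted Strauss step by this weighted one; the rest of your outline then goes through.
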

	
	\begin{proof}
		Thanks to the virial identity \eqref{vir-ide-chi-2}, we write
		\begin{align*}
		\frac{d^2}{dt^2} V_{\chi_R}(t) = 16 E(u,v) &-\int \Delta^2 \chi_R \left(|u|^2 + \frac{1}{2}|v|^2 \right) dx +\sum_{jk} \text{Re} \int \partial^2_{jk} \chi_{R} ( 4 \partial_j u \partial_k \overline{u} + 2 \partial_j v \partial_k \overline{v}) dx \\
		& - 2 \text{Re} \int \Delta \chi_R \overline{v} u^2 dx - 8\left( \|\nabla u\|^2_{L^2} + \frac{1}{2} \|\nabla v\|^2_{L^2} \right) + 16 \text{Re} \int \overline{v} u^2 dx.
		\end{align*}
		Since $(u,v)$ is radial, we use the fact
		\begin{align} \label{rad-for}
		\partial_j = \frac{x_j}{r} \partial_r, \quad \partial^2_{jk} = \left( \frac{\delta_{jk}}{r} - \frac{x_j x_k}{r^3} \right) \partial_r + \frac{x_j x_k}{r^2} \partial^2_r
		\end{align}
		to have
		\[
		\sum_{jk} \partial^2_{jk} \chi_R \partial_j u \partial_k \overline{u} = \chi''_R |\partial_r u|^2 = \chi''_R |\nabla u|^2.
		\]
		We thus write
		\begin{align*}
		\frac{d^2}{dt^2} V_{\chi_R}(t) = 16 E(u,v) - \int \Delta^2 \chi_R \left( |u|^2 + \frac{1}{2} |v|^2 \right) dx  - 4 \int \chi_{1,R} |\nabla u|^2 dx &- 2 \int \chi_{1,R} |\nabla v|^2 dx \\
		&+ 2 \text{Re} \int \chi_{2,R} \overline{v} u^2 dx,
		\end{align*}
		where $\chi_{1,R}$ and $\chi_{2,R}$ are as in \eqref{def-chi-12-R}. We next use the radial Sobolev embedding (see e.g. \cite{Strauss})
		\[
		\sup_{x \ne 0} |x|^{\frac{d-1}{2}} |f(x)| \leq C(d) \|\nabla f\|_{L^2}^{\frac{1}{2}} \|f\|^{\frac{1}{2}}_{L^2}
		\]
		and the conservation of mass to estimate
		\begin{align*}
		\left| \text{Re} \int \chi_{2,R} \overline{v} u^2 dx \right| & \leq \int_{|x|>R} \chi_{2,R} |v| |u|^2 dx \\
		&\leq \left( \sup_{|x|>R} |\chi_{2,R}(x) v(t,x)| \right) \|u\|^2_{L^2} \\
		&\leq R^{-\frac{3}{2}} \left( \sup_{|x|>R} |x|^{\frac{3}{2}} |\chi_{2,R}(x) v(t,x)| \right) \|u\|^2_{L^2} \\
		&\lesssim R^{-\frac{3}{2}} \|\nabla (\chi_{2,R} v)\|^{\frac{1}{2}}_{L^2} \|\chi_{2,R} v\|^{\frac{1}{2}}_{L^2} \|u\|^2_{L^2} \\
		&\lesssim R^{-\frac{3}{2}} \|\nabla (\chi_{2,R} v)\|^{\frac{1}{2}}_{L^2}.
		\end{align*}
		We next use the Young inequality to have for any $\epsilon>0$,
		\begin{align*}
		R^{-\frac{3}{2}} \|\nabla(\chi_{2,R} v)\|^{\frac{1}{2}}_{L^2} &\lesssim \epsilon \|\nabla (\chi_{2,R} v)\|^2_{L^2} +\epsilon^{-\frac{1}{3}} R^{-2} \\
		& \leq \epsilon \left(\|(\nabla \chi_{2,R}) v\|^2_{L^2} + \|\chi_{2,R} \nabla v\|^2_{L^2}\right) + \epsilon^{-\frac{1}{3}} R^{-2} \\
		&\lesssim \epsilon \|\chi_{2,R} \nabla v\|^2_{L^2} + \epsilon R^{-2} + \epsilon^{-\frac{1}{3}} R^{-2}.
		\end{align*}
		Here we use the fact that $\|\nabla \chi_{2,R}\|_{L^\infty} \lesssim R^{-2}$. Therefore,
		\begin{align} \label{vir-ide-chi-2-pro-1}
		\left| \text{Re}\int \chi_{2,R} \overline{v} u^2 dx \right| \lesssim \epsilon \int \chi_{2,R}^2 |\nabla v|^2 dx + \epsilon R^{-2} + \epsilon^{-\frac{1}{3}} R^{-2}.
		\end{align}
		Similarly, we have
		\[
		\left| \text{Re} \int \chi_{2,R} \overline{v} u^2 dx \right| \leq \int_{|x|>R} \chi_{2,R} |v| |u|^2 dx \leq \left(\sup_{|x|>R} |\chi_{2,R}(x) u(t,x)| \right) \|v\|_{L^2} \|u\|_{L^2}.
		\]
		Thus,
		\begin{align} \label{vir-iden-chi-2-pro-2}
		\left| \text{Re} \int \chi_{2,R} \overline{v} u^2 dx \right| \lesssim \epsilon \int \chi^2_{2,R} |\nabla u|^2 dx + \epsilon R^{-2} + \epsilon^{-\frac{1}{3}} R^{-2}.
		\end{align}
		Combining \eqref{vir-ide-chi-2-pro-1} and \eqref{vir-iden-chi-2-pro-2}, we prove the claim.
	\end{proof}
	
	We first prove the blow-up criteria for \eqref{mas-res-Syst} given in Theorem $\ref{theo-blo-non-rad}$.
	
	\noindent \textit{Proof of Theorem $\ref{theo-blo-non-rad}$.}
		The proof is based on the argument of Du-Wu-Zhang \cite{DWZ}. If $T<+\infty$, we are done. If $T=+\infty$, then we need to show \eqref{blo-non-rad}. Assume by contradiction that the solution exists globally in time and satisfies
		\begin{align} \label{blo-non-rad-pro-1}
		\sup_{t\in [0,+\infty)} \|(u(t),v(t))\|_{H^1 \times H^1} <\infty. 
		\end{align}
		The key step is to control $L^2$-norm of the solution outside a large ball. To do this, we introduce $\vartheta: [0,\infty) \rightarrow [0,1]$ a smooth function satisfying $\vartheta(r) =0$ if $0\leq r \leq 1/2$ and $\vartheta(r) = 1$ if $r \geq 1$. Given $R>0$, we define the radial function
		\[
		\varphi_R(x) = \varphi_R(r):= \vartheta(r/R), \quad r=|x|.
		\]
		It is easy to see that $\|\nabla \varphi_R\|_{L^\infty} \lesssim R^{-1}$. We next define 
		\[
		I_{\varphi_R} (t) := \int \varphi_R(x) \left( |u(t,x)|^2 + 2 |v(t,x)|^2 \right) dx.
		\]
		By the fundamental theorem of calculus, we have
		\[
		I_{\varphi_R}(t) = I_{\varphi_R}(0) + \int_0^t \frac{d}{ds} I_{\varphi_R}(s) ds \leq V_{\varphi_R}(0) + \left(\sup_{s \in [0,t]} \left|\frac{d}{ds} I_{\varphi_R}(s)\right| \right) t.
		\]
		By \eqref{blo-non-rad-pro-1} and the conservation of mass, 
		\begin{align*}
		\sup_{s \in [0,t]} \left| \frac{d}{ds} I_{\varphi_R}(s) \right| &= \sup_{s \in [0,t]} \left| 2 \int \nabla \varphi_R \cdot \text{Im}(\nabla u \overline{u} + \nabla v \overline{v}) dx \right| \\
		&\lesssim  \|\nabla \varphi_R\|_{L^\infty} \sup_{s\in [0,t]} (\|\nabla u(s)\|_{L^2} \|u(s)\|_{L^2} + \|\nabla v(s)\|_{L^2} \|v(s)\|_{L^2}) \\
		&\lesssim \|\nabla \varphi_R\|_{L^\infty} \sup_{s\in [0,t]} \|(u(s), v(s))\|_{H^1 \times H^1} \\
		&\lesssim CR^{-1},
		\end{align*}
		for some constant $C>0$ independent of $R$. We thus obtain
		\[
		I_{\varphi_R}(t) \leq V_{\varphi_R}(0) + CR^{-1} t. 
		\]
		It follows from the choice of $\vartheta$ and the conservation of mass that
		\[
		I_{\varphi_R}(0) = \int \varphi_R(x) (|u_0(x)|^2 + 2 |v_0(x)|^2) dx \leq \int_{|x|>R/2} |u_0(x)|^2 + 2 |v_0(x)|^2 dx \rightarrow 0 \text{ as } R\rightarrow \infty.
		\]
		Thus $I_{\varphi_R}(0) = o_R(1)$. Since 
		\[
		\int_{|x|\geq R} |u(t,x)|^2 + 2 |v(t,x)|^2 dx \leq I_{\varphi_R}(t),
		\]
		we obtain the following control on the $L^2$-norm of the solution outside a large ball.
		\begin{lemma} \label{lem-con-L2-nor}
			Let $d=4$ and $\kappa=1$. Let $\epsilon>0$ and $R>0$. Then there exists a constant $C$ independent of $R$ such that for any $t\in [0,T_0]$ with $T_0:= \frac{\epsilon R}{C}$, 
			\begin{align} \label{con-L2-nor}
			\int_{|x|\geq R} |u(t,x)|^2 + 2 |v(t,x)|^2 \leq o_R(1) + \epsilon.
			\end{align}
		\end{lemma}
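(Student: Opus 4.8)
The plan is to collect the estimates already assembled in the running proof of Theorem~\ref{theo-blo-non-rad} and simply read off the stated bound. The starting observation is that, since $\varphi_R \geq 0$ everywhere and $\varphi_R \equiv 1$ on $\{|x| \geq R\}$, the exterior mass is dominated by the localized quantity,
\[
\int_{|x| \geq R} |u(t,x)|^2 + 2|v(t,x)|^2 \, dx \;\leq\; I_{\varphi_R}(t).
\]
Thus it suffices to bound $I_{\varphi_R}(t)$ by $o_R(1) + \epsilon$ on the interval $[0,T_0]$.

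First I would invoke the time-derivative estimate already derived above: applying the virial identity \eqref{vir-ide-chi-1} with $\chi = \varphi_R$, together with $\|\nabla \varphi_R\|_{L^\infty} \lesssim R^{-1}$, the conservation of mass, and the standing contradiction hypothesis \eqref{blo-non-rad-pro-1} that the $H^1 \times H^1$ norm remains uniformly bounded in time, one obtains
\[
\sup_{s \in [0,t]} \left| \frac{d}{ds} I_{\varphi_R}(s) \right| \;\lesssim\; C R^{-1},
\]
with $C$ independent of $R$. The fundamental theorem of calculus then gives $I_{\varphi_R}(t) \leq I_{\varphi_R}(0) + C R^{-1} t$. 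Next I would use that the initial localized mass is small for large $R$: because $\varphi_R$ is supported in $\{|x| > R/2\}$, the choice of $\vartheta$ and conservation of mass yield $I_{\varphi_R}(0) \leq \int_{|x| > R/2} |u_0|^2 + 2|v_0|^2 \, dx = o_R(1)$. Combining these,
\[
\int_{|x| \geq R} |u(t,x)|^2 + 2|v(t,x)|^2 \, dx \;\leq\; o_R(1) + C R^{-1} t.
\]
Finally, restricting to $t \in [0,T_0]$ with $T_0 = \epsilon R / C$ forces $C R^{-1} t \leq \epsilon$, which is precisely the claimed estimate \eqref{con-L2-nor}.

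Since every ingredient has already been established in the preceding paragraphs, there is no genuine obstacle here; the content is essentially bookkeeping. The one point deserving care is that the derivative bound, and hence the entire argument, rests on the uniform-in-time bound \eqref{blo-non-rad-pro-1} assumed for contradiction: this is exactly what makes $C$ independent of both $R$ and $t$. I would also emphasize that the emergent time scale $T_0 \sim \epsilon R$ reflects the balance between the $O(R^{-1})$ flux of mass through the sphere $\{|x| = R\}$ and the prescribed tolerance $\epsilon$; it is precisely this quantitative window, on which the mass outside a large ball stays controllably small, that the subsequent localized-virial blow-up argument will exploit.
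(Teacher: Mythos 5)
Your proof is correct and is essentially identical to the paper's own argument: the paper establishes the lemma inside the proof of Theorem \ref{theo-blo-non-rad} by the same chain, namely the virial identity \eqref{vir-ide-chi-1} applied to $\varphi_R$, the bound $\|\nabla\varphi_R\|_{L^\infty}\lesssim R^{-1}$ combined with \eqref{blo-non-rad-pro-1} and mass conservation to get $|\frac{d}{ds}I_{\varphi_R}(s)|\lesssim CR^{-1}$, the fundamental theorem of calculus, and $I_{\varphi_R}(0)=o_R(1)$. Nothing further is needed.
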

		Now let $\chi_R$ be as in \eqref{def-chi-R}. By \eqref{vir-ide-chi-2}, 
		\begin{align*}
		\frac{d^2}{dt^2} V_{\chi_R}(t) = -\int \Delta^2 \chi_R \left(|u|^2 +\frac{1}{2}|v|^2\right) dx &+ \sum_{jk} \text{Re} \int \partial^2_{jk} \chi_R (4 \partial_j u \partial_k \overline{u} + 2 \partial_j u \partial_k \overline{u}) dx \\
		&- 2\text{Re} \int \Delta \chi_R \overline{v} u^2 dx.
		\end{align*}
		Since $\chi_R$ is radial, by using \eqref{rad-for}, it is not hard to check that
		\[
		\sum_{jk} \text{Re} \int \partial^2_{jk}\chi_R \partial_j u \partial_k \overline{u} dx = \int \frac{\chi'_R}{r} |\nabla u|^2 dx + \int \left(\frac{\chi''_R}{r^2}-\frac{\chi'_R}{r^3}\right) |x\cdot \nabla u|^2 dx,
		\]
		and similarly for $v$. We thus write
		\begin{align*}
		\frac{d^2}{dt^2} V_{\chi_R}(t) &= 16E(u,v)- 8 \left( \|\nabla u\|^2_{L^2}+\frac{1}{2}\|\nabla v\|^2_{L^2}\right) + 16 \text{Re} \int \overline{v} u^2 dx \\
		&\mathrel{\phantom{=}} -\int \Delta^2 \chi_R\left(|u|^2 +\frac{1}{2} |v|^2 \right) dx  - 2 \text{Re} \int \Delta \chi_R \overline{v} u^2 dx \\
		&\mathrel{\phantom{=}} + 4 \int \frac{\chi'_R}{r} |\nabla u|^2 dx + 4 \int \left(\frac{\chi''_R}{r^2} - \frac{\chi'_R}{r^3} \right) |x \cdot \nabla u|^2 dx \\
		&\mathrel{\phantom{=}} + 2 \int \frac{\chi'_R}{r} |\nabla v|^2 dx + 2 \int \left(\frac{\chi''_R}{r^2} - \frac{\chi'_R}{r^3} \right) |x \cdot \nabla v|^2 dx \\
		&= 16 E(u,v) - \int \Delta^2 \chi_R \left( |u|^2 +\frac{1}{2} |v|^2 \right) dx + 2 \text{Re} \int (8-\Delta \chi_R) \overline{v} u^2 dx \\
		&\mathrel{\phantom{=}} + 4 \int \left(\frac{\chi'_R}{r}-2\right) |\nabla u|^2 dx + 4 \int \left(\frac{\chi''_R}{r^2} - \frac{\chi'_R}{r^3}\right) |x \cdot \nabla u|^2 dx \\
		&\mathrel{\phantom{=}} + 2 \int \left(\frac{\chi'_R}{r}-2\right) |\nabla v|^2 dx + 2 \int \left(\frac{\chi''_R}{r^2} - \frac{\chi'_R}{r^3}\right) |x \cdot \nabla v|^2 dx.
		\end{align*}
		Since $\|\Delta^2 \chi_R\|_{L^\infty} \lesssim R^{-2}$ and $\text{supp}(\Delta^2 \chi_R) \subset \{ |x| \geq R\}$, we have
		\[
		\int \Delta^2 \chi_R \left(|u|^2 +\frac{1}{2}|v|^2 \right) dx \lesssim R^{-2} \left(\|u\|^2_{L^2(|x| \geq R)} + 2 \|v\|^2_{L^2(|x|\geq R)}\right).
		\]
		Since $\chi''_R \leq 2$, the Cauchy-Schwarz inequality $|x\cdot \nabla u| \leq |x| |\nabla u| = r |\nabla u$ implies that
		\[
		\int \left(\frac{\chi'_R}{r}-2\right) |\nabla u|^2 dx + \int \left(\frac{\chi''_R}{r^2}-\frac{\chi'_R}{r^3}\right) |x \cdot \nabla u|^2 dx \leq 0,
		\]
		and similarly for $v$. Moreover, since $\|8-\Delta \chi_R\|_{L^\infty} \lesssim 1$ and $\text{supp}(8-\Delta \chi_R) \subset \{ |x| \geq R\}$, the Sobolev embedding and \eqref{blo-non-rad-pro-1} imply that
		\begin{align*}
		\left| \text{Re} \int (8-\Delta \chi_R) \overline{v} u^2 dx \right| &\lesssim \int_{|x|\geq R} |v| |u|^2 dx \\
		&\lesssim \|v\|_{L^4} \|u\|_{L^4} \|u\|_{L^2(|x|\geq R)} \\
		&\lesssim \|\nabla v\|_{L^2} \|\nabla u\|_{L^2} \|u\|_{L^2(|x| \geq R)} \\
		&\lesssim \|u\|_{L^2(|x| \geq R)} + 2 \|v\|_{L^2(|x|\geq R)}.
		\end{align*}
		Collecting the above estimates, we get the following result.
		\begin{lemma} \label{lem-blow-non-rad-pro}
			Let $d=4$ and $\kappa=\frac{1}{2}$. Let $R>0$ and $\chi_R$ be as in \eqref{def-chi-R}. Then there exists $C>0$ independent of $R$ such that 
			\begin{align*}
			\frac{d^2}{dt^2} V_{\chi_R}(t) \leq 16 E(u,v) &+ CR^{-2} \left(\|u(t)\|^2_{L^2(|x|\geq R)} + 2 \|v(t)\|^2_{L^2(|x| \geq R)}\right) \\
			&+ C \left(\|u(t)\|^2_{L^2(|x|\geq R)} + 2 \|v(t)\|^2_{L^2(|x| \geq R)}\right).
			\end{align*}
		\end{lemma}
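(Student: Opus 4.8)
The plan is to start from the exact virial identity \eqref{vir-ide-chi-2} with $\chi=\chi_R$ and to extract the energy by forcibly inserting $16E(u,v)$. Recalling that $E(u,v)=\tfrac12(\|\nabla u\|_{L^2}^2+\tfrac12\|\nabla v\|_{L^2}^2)-\mathrm{Re}\int\overline v u^2\,dx$, I would add and subtract $8\|\nabla u\|_{L^2}^2+4\|\nabla v\|_{L^2}^2-16\,\mathrm{Re}\int\overline v u^2\,dx$, so that the ``flat'' part of the virial (the piece coming from the unweighted generator $|x|^2$) reproduces $16E(u,v)$, while every remaining term is supported in the region $\{|x|\ge R\}$ where $\chi_R$ departs from $|x|^2$.

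Next I would use that $\chi_R$ is radial, together with the identities \eqref{rad-for} for its second derivatives, to rewrite the Hessian term as
\[
\sum_{jk}\mathrm{Re}\int\partial^2_{jk}\chi_R\,\partial_j u\,\partial_k\overline u\,dx=\int\frac{\chi_R'}{r}|\nabla u|^2\,dx+\int\Big(\frac{\chi_R''}{r^2}-\frac{\chi_R'}{r^3}\Big)|x\cdot\nabla u|^2\,dx,
\]
and likewise for $v$; crucially this rewriting uses only the radiality of $\chi_R$, not of the solution, so it applies to the general non-radial $(u,v)$. Grouping the $-8\|\nabla u\|_{L^2}^2$ contribution with the $\chi_R'/r$ term produces the coefficient $(\chi_R'/r-2)$, and the two nonlinear terms collapse into a single $2\,\mathrm{Re}\int(8-\Delta\chi_R)\overline v u^2\,dx$.

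Then I would estimate the three leftover groups. The biharmonic term is harmless since $\|\Delta^2\chi_R\|_{L^\infty}\lesssim R^{-2}$ with support in $\{|x|\ge R\}$, giving the $O(R^{-2})$ tail-mass contribution. For the nonlinear term I would use $\|8-\Delta\chi_R\|_{L^\infty}\lesssim1$ and its support in $\{|x|\ge R\}$, the endpoint Sobolev embedding $H^1\hookrightarrow L^4$ (valid because $2^*=4$ when $d=4$), and the standing global bound \eqref{blo-non-rad-pro-1}, to control it by the $L^2$ mass outside the ball.

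The step I expect to be the crux is showing the two gradient groups have a favorable sign, namely
\[
\int\Big(\frac{\chi_R'}{r}-2\Big)|\nabla u|^2\,dx+\int\Big(\frac{\chi_R''}{r^2}-\frac{\chi_R'}{r^3}\Big)|x\cdot\nabla u|^2\,dx\le0.
\]
Here I would argue pointwise: writing $a=\chi_R'/r-2$ and $b=\chi_R''/r^2-\chi_R'/r^3$, the properties \eqref{pro-chi-R} give $a\le0$ and $\chi_R''\le2$. If $b\le0$ both summands are nonpositive; if $b>0$ the Cauchy--Schwarz bound $|x\cdot\nabla u|^2\le r^2|\nabla u|^2$ yields $a|\nabla u|^2+b|x\cdot\nabla u|^2\le(a+br^2)|\nabla u|^2=(\chi_R''-2)|\nabla u|^2\le0$. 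This is exactly where the design constraint $\theta''\le2$ from \eqref{def-the} enters, and it is the only genuinely delicate point rather than bookkeeping. Collecting the three estimates then gives the stated bound.
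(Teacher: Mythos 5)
Your proposal is correct and follows essentially the same route as the paper: insert $16E(u,v)$ into the virial identity \eqref{vir-ide-chi-2}, use the radiality of $\chi_R$ via \eqref{rad-for} to reduce the Hessian term, bound the $\Delta^2\chi_R$ and $(8-\Delta\chi_R)$ terms by the tail mass using their supports together with $H^1\hookrightarrow L^4$ and the uniform bound, and kill the gradient terms by the sign conditions \eqref{pro-chi-R}. Your two-case sign argument (splitting on the sign of $\chi_R''/r^2-\chi_R'/r^3$) is just a slightly more explicit version of the paper's one-line appeal to $\chi_R''\le 2$ and Cauchy--Schwarz.
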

		We are now complete the proof of Theorem $\ref{theo-blo-non-rad}$. Applying Lemma $\ref{lem-con-L2-nor}$ and Lemma $\ref{lem-blow-non-rad-pro}$, we get for any $\epsilon>0$ and $R>0$, there exists $C>0$ independent of $\epsilon$ and $R$ such that for any $t\in [0,T_0]$ with $T_0:= \frac{\epsilon R}{C}$,
		\[
		\frac{d^2}{dt^2} V_{\chi_R}(t) \leq 16 E(u_0,v_0) + CR^{-2}(o_R(1)+\epsilon) + C(o_R(1)+\epsilon).
		\]
		Note that the constant $C$ may change from lines to lines but is independent of $\epsilon$ and $R$. We choose $\epsilon>0$ such that $C\epsilon = -8E(u_0,v_0)>0$. We next choose $R\gg 1$ large enough such that $CR^{-2}(o_R(1)+\epsilon) + Co_R(1)= - 4 E(u_0,v_0)$. Then
		\[
		\frac{d^2}{dt^2} V_{\chi_R}(t) \leq 4 E(u_0,v_0)
		\]
		for any $t\in [0,T_0]$. Integrating from $0$ to $T_0$, we get
		\begin{align*}
		V_{\chi_R}(T_0) &\leq V_{\chi_R}(0) + V'_{\chi_R}(0) T_0 + 2 E(u_0,v_0) T_0^2 \\
		&\leq V_{\chi_R}(0) + V'_{\chi_R} \frac{\epsilon R}{C} + 2 E(u_0,v_0) \frac{\epsilon^2 R^2}{C^2}.
		\end{align*}
		We next claim that
		\[
		V_{\chi_R}(0)=o_R(1) R^2, \quad V'_{\chi_R}(0)= o_R(1) R.
		\]
		Using this claim, we have
		\[
		V_{\chi_R}(T_0) \leq o_R(1) R^2 + 2 \eta R^2, 
		\]
		where $\eta:= \frac{E(u_0,v_0) \epsilon^2}{C^2}<0$. Taking $R\gg 1$ large enough, we obtain 
		\[
		V_{\chi_R}(T_0) \leq \eta R^2 <0,
		\]
		which contradicts to the fact $V_{\chi_R}(T_0)$ is non-negative. It remains to prove the claim. We seet that for $R\gg 1$ large enough,
		\begin{align*}
		V_{\chi_R}(0) &= \int \chi_R (|u_0|^2 + 2 |v_0|^2) dx \\
		&= \int_{|x|  \leq \sqrt{R}} |x|^2 (|u_0|^2 +2 |v_0|^2) dx + \int_{\sqrt{R}<|x| <2R} \chi_R(|u_0|^2 + 2 |v_0|^2)dx \\
		&\mathrel{\phantom{= \int_{|x|  \leq \sqrt{R}} |x|^2 (|u_0|^2 +2 |v_0|^2) dx}}+ \int_{|x|\geq 2R} 2 (|u_0|^2 + 2|v_0|^2) dx \\
		&\leq R M(u_0,v_0) + R^2 \int_{|x|>\sqrt{R}} |u_0|^2 + 2 |v_0|^2 dx \\
		&= o_R(1) R^2.
		\end{align*}
		We also have
		\begin{align*}
		V'_{\chi_R}(0) &= 2 \int \nabla \chi_R \cdot \text{Im}(\nabla u_0 \overline{u}_0 + \nabla v_0 \overline{v}_0) dx \\
		&= 2 \int \frac{\chi'_R}{r} x \cdot \text{Im} (\nabla u_0 \overline{u}_0 + \nabla v_0 \overline{v}_0) dx \\
		&=2 \int_{|x| \leq \sqrt{R}} \frac{\chi'_R}{r} x \cdot \text{Im} (\nabla u_0 \overline{u}_0 + \nabla v_0 \overline{v}_0) dx + 2 \int_{\sqrt{R} <|x| <2R} \frac{\chi'_R}{r} x \cdot \text{Im} (\nabla u_0 \overline{u}_0 + \nabla v_0 \overline{v}_0) dx \\
		&\leq 4\sqrt{R} \left(\|\nabla u_0\|_{L^2} \|u_0\|_{L^2}+ \|\nabla v_0\|_{L^2} \|v_0\|_{L^2}\right) \\
		&\mathrel{\phantom{\leq}}+ 8 R \left(\|\nabla u_0\|_{L^2(|x|>\sqrt{R})} \|u_0\|_{L^2(|x|>\sqrt{R})}+ \|\nabla v_0\|_{L^2(|x|>\sqrt{R})} \|v_0\|_{L^2(|x|>\sqrt{R})}\right) \\
		&= o_R(1) R. 
		\end{align*}
		The claim is proved and the proof of Theorem \ref{theo-blo-non-rad} is complete.
	\hfill $\Box$
	
	We end this subsection by giving the proof of Theorem $\ref{theo-blo-rad}$.
	
	\noindent \textit{Proof of Theorem $\ref{theo-blo-rad}$.}
		We use the localized virial estimate \eqref{loc-vir-est} to have
		\begin{align*}
		\frac{d^2}{dt^2} V_{\chi_R}(t) \leq 16 E(u(t),v(t)) &- 4 \int (\chi_{1,R} - C\epsilon \chi_{2,R}^2) |\nabla u(t)|^2 dx \nonumber \\
		& -2 \int (\chi_{1,R} - C\epsilon \chi_{2,R}^2) |\nabla v(t)|^2 dx + O(R^{-2} + \epsilon R^{-2} + \epsilon^{-1/3} R^{-2}),
		\end{align*}
		where
		\[
		\chi_{1,R}= 2-\chi''_R, \quad \chi_{2,R} = 8-\Delta \chi_R.
		\]
		If we choose a suitable function $\chi_R$ defined in \eqref{def-chi-R} so that
		\begin{align} \label{pos-con}
		\chi_{1,R}(r) - C\epsilon \chi_{2,R}(r)^2 \geq 0, \quad \forall r \geq 0,
		\end{align}
		for a suifficiently small $\epsilon>0$, then by choosing $R\gg 1$ large enough depending on $\epsilon$, we obtain
		\[
		\frac{d^2}{dt^2} V_{\chi_R}(t) \leq 8 E(u_0,v_0) <0,
		\]
		for any $t$ in the existence time. The classical argument of Glassey \cite{Glassey} implies that the solution must blows up in finite time. 
		
		To finish the proof, let us show \eqref{pos-con}. We use the argument of \cite{OT}. Let us define the following function
		\[
		\vartheta(r):= \left\{
		\begin{array}{c l c}
		2r & \text{if}& 0\leq r\leq 1, \\
		2[r-(r-1)^3] &\text{if}& 1<r\leq 1+1/\sqrt{3}, \\
		\vartheta' <0 &\text{if}& 1+ 1/\sqrt{3} <r < 2, \\
		0 &\text{if}& r\geq 2,
		\end{array}
		\right.
		\]
		and 
		\[
		\theta(r):= \int_0^r \vartheta(s)ds.
		\]
		We see that the function $\theta$ satisfies \eqref{def-the}. We next define $\chi_R$ as in \eqref{def-chi-R}. With this choice of $\chi_R$, the condition \eqref{pos-con} is satisfied. Indeed,
		for $0 \leq r \leq R$, \eqref{pos-con} is trivial since $\chi_{1,R} = \chi_{2,R}=0$. For $R<r <(1+1/\sqrt{3})R$, we have
		\[
		\chi_{1,R}= 2-\chi''_R= 2-\theta''(r/R) = 2-\vartheta'(r/R) = 6 (r/R-1)^2,
		\]
		and
		\begin{align*}
		\chi_{2,R}=8-\Delta \chi_R= 8-\chi''_R - \frac{3}{r} \chi'_R = 2-\chi''_R + 3(2-\chi'_R/r) &= 6(r/R-1)^2 \left(1+\frac{r/R-1}{r/R} \right) \\
		&< 6(r/R-1)^2 (1+1/\sqrt{3}).
		\end{align*}
		By choosing $\epsilon>0$ small enough, we see that the condition \eqref{pos-con} is satisfied. For $r>(1+1/\sqrt{3})R$, we see that $\chi''_R \leq 0$. Thus $\chi_{1,R}=2-\chi''_R \geq 2$ and $\chi_{2,R}=2-\chi''_R + 3(2-\chi'_R/r) \leq C$ for some constant $C>0$. Therefore, the condition \eqref{pos-con} is also satisfied by choosing $\epsilon>0$ small enough. The proof is complete.
	\hfill $\Box$
	
	\subsection{Characterization of finite time blow-up solutions with minimal mass}
	\label{Sub:2.2}
	The main purpose of this subsection is to classify finite time blow-up solutions of \eqref{mas-res-Syst} with minimal mass. We want to show that if $(u,v)$ is a solution which blows up at finite time $T$, then up to symmetries of the system, it is the pseudo-conformal transformation of the ground states $(\phi, \psi)$ for \eqref{sys-ell-equ} with $\omega_2 = 2 \omega_1 =2$.
		
	To this end, we need the profile decomposition related to \eqref{mas-res-Syst}. 
	\begin{proposition}[Profile decomposition] \label{prop-pro-dec}
		Let $d=4$. Le $(u_n,v_n)_{n\geq 1}$ be a bounded sequence in $H^1 \times H^1$. Then there exist a subsequence, still denoted by $(u_n,v_n)_{n\geq 1}$, a family $(x^j_n)_{n\geq 1}$ of sequences in $\mathbb{R}^4$ and a sequence $(U^j, V^j)_{j\geq 1}$ of $H^1\times H^1$-functions such that
		\begin{itemize}
			\item[(1)] for every $j\ne k$, 
			\begin{align} \label{ort-pro-dec}
			|x^j_n-x^k_n| \rightarrow \infty \text{ as } n\rightarrow \infty; 
			\end{align}
			\item[(2)] for every $l\geq 1$ and every $x \in \mathbb{R}^4$, 
			\[
			u_n(x) = \sum_{j=1}^l U^j(x-x^j_n) + u^l_n(x), \quad v_n(x)= \sum_{j=1}^l V^j(x-x^j_n) + v^l_n(x),
			\]
			with
			\begin{align} \label{err-pro-dec}
			\limsup_{n\rightarrow \infty} \|(u^l_n, v^l_n)\|_{L^q\times L^q} \rightarrow 0 \text{ as } l \rightarrow \infty,
			\end{align}
			for every $q\in (2, 4)$.
		\end{itemize}
		Moreover, for every $l\geq 1$,
		\begin{align}
		M(u_n,v_n) &= \sum_{j=1}^l M(U^j_n, V^j_n) + M(u^l_n,v^l_n) + o_n(1), \label{mas-pro-dec} \\
		K(u_n,v_n) &= \sum_{j=1}^l K(U^j,V^j) + K(u^l_n, v^l_n) + o_n(1), \label{kin-pro-dec} \\
		P(u_n,v_n) &= \sum_{j=1}^l P(U^j,V^j) + P(u^l_n, v^l_n) + o_n(1), \label{sca-pro-dec}
		\end{align}
		where $o_n(1) \rightarrow 0$ as $n\rightarrow \infty$.
	\end{proposition}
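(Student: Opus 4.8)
The plan is to construct the profiles by the now-standard inductive extraction procedure of G\'erard and Hmidi--Keraani, applied to the pair $(u_n,v_n)$ jointly in $H^1\times H^1$ so that a single core sequence $(x^j_n)_n$ serves both components. The engine is the contrapositive of Lemma $\ref{lem-lions}$: if a bounded sequence in $H^1\times H^1$ does not vanish in $L^q\times L^q$ for some (hence, by interpolation between $L^2$ and $H^1$, for every) $q\in(2,4)$, then, after translating by a suitable sequence of points and passing to a subsequence, it converges weakly to a nonzero limit. I would set $(u^0_n,v^0_n):=(u_n,v_n)$ and, assuming $(U^j,V^j)$ and the remainder $(u^l_n,v^l_n)$ have been built for $j\le l$, examine $\epsilon_l:=\limsup_{n\to\infty}\|(u^l_n,v^l_n)\|_{L^q\times L^q}$. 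If $\epsilon_l=0$ the construction terminates; otherwise the extraction lemma produces $(x^{l+1}_n)_n$ and a nonzero $(U^{l+1},V^{l+1})$ with $(u^l_n(\cdot+x^{l+1}_n),v^l_n(\cdot+x^{l+1}_n))\rightharpoonup(U^{l+1},V^{l+1})$ weakly in $H^1\times H^1$, and I define $(u^{l+1}_n,v^{l+1}_n):=(u^l_n-U^{l+1}(\cdot-x^{l+1}_n),\,v^l_n-V^{l+1}(\cdot-x^{l+1}_n))$.

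The crucial quantitative point is the termination bound: the extraction lemma gives a lower bound $\|(U^{l+1},V^{l+1})\|_{H^1\times H^1}\gtrsim \epsilon_l^{\alpha}$ for some $\alpha>0$ (coming from the Gagliardo--Nirenberg inequality $\eqref{GN}$ together with the uniform $H^1$ bound). Since the masses satisfy the Pythagorean relation below and $\sum_{j}M(U^j,V^j)\le \liminf_n M(u_n,v_n)<\infty$, the profile masses are summable and therefore $\|(U^j,V^j)\|_{H^1\times H^1}\to0$; this forces $\epsilon_l\to0$ as $l\to\infty$, which is precisely $\eqref{err-pro-dec}$. By a diagonal subsequence argument I then obtain a single subsequence for which all the decompositions hold.

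Next I would establish the orthogonality of the cores $\eqref{ort-pro-dec}$ and the decoupling of the quadratic quantities. The core orthogonality follows by contradiction: if $x^j_n-x^k_n$ stayed bounded for some $j\ne k$, one could pass to the limit after translation by $x^k_n$ and contradict the construction, which forces $(u^{k}_n(\cdot+x^k_n),v^k_n(\cdot+x^k_n))\rightharpoonup(U^k,V^k)$ while the already-subtracted profile $U^j$ would reappear. Granting $\eqref{ort-pro-dec}$, the identities $\eqref{mas-pro-dec}$ and $\eqref{kin-pro-dec}$ are the usual almost-orthogonal expansions: writing $u_n=\sum_{j\le l}U^j(\cdot-x^j_n)+u^l_n$ and similarly for $v_n$, one expands the $L^2$ and $\dot H^1$ inner products, and the cross terms between distinct cores vanish as $n\to\infty$ because $|x^j_n-x^k_n|\to\infty$ (approximating $H^1$ functions by compactly supported ones), while the profile--remainder cross terms vanish since $u^l_n(\cdot+x^j_n)\rightharpoonup0$ weakly for each $j\le l$ by construction.

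The main obstacle I anticipate is the decoupling $\eqref{sca-pro-dec}$ of the trilinear term $P(u,v)=\operatorname{Re}\langle v,u^2\rangle=\operatorname{Re}\int \overline{v}\,u^2\,dx$, since here one must control a genuinely nonlinear interaction rather than an inner product. Expanding $P(u_n,v_n)$ produces the diagonal terms $P(U^j,V^j)$ (after undoing the shift) and the pure remainder term $P(u^l_n,v^l_n)$, which are the quantities retained, together with a large collection of cross terms. Each cross term involves at least two different cores or a remainder factor; those among distinct cores vanish by the divergence of $|x^j_n-x^k_n|$ after a change of variables, while those containing a remainder factor are handled by $H\ddot{\text{o}}$lder together with the local compactness embedding $H^1(B(0,R))\hookrightarrow L^3(B(0,R))$ and the weak vanishing $u^l_n(\cdot+x^j_n)\rightharpoonup0$, exactly as in the weak-limit computation carried out in the proof of Lemma $\ref{lem-exi-min}$ (see $\eqref{exi-min-2}$). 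Carefully organizing these $O(l^3)$ cross terms and showing each is $o_n(1)$ for fixed $l$ is the most delicate bookkeeping step; once it is done, collecting all the estimates completes the proof.
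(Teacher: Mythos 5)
Your proposal is correct in outline and arrives at the same decomposition, but it takes a genuinely different route at the key step. The paper follows Hmidi--Keraani: it extracts each profile by nearly maximizing the functional $\eta(\mathbf{u},\mathbf{v})=\sup\{M(u,v)+K(u,v)\}$ over all translated weak limits, and then proves the remainder estimate \eqref{err-pro-dec} by a Fourier low/high frequency splitting ($u^l_n=\chi_R\ast u^l_n+(\delta-\chi_R)\ast u^l_n$, with the high frequencies controlled in $\dot H^\gamma$ and the low frequencies in $L^2$--$L^\infty$ interpolation via Plancherel and the smallness of $\eta$). You instead extract profiles directly from the failure of $L^q$-vanishing via the contrapositive of Lemma \ref{lem-lions}, and deduce \eqref{err-pro-dec} from a quantitative lower bound $\|(U^{l+1},V^{l+1})\|_{H^1\times H^1}\gtrsim\epsilon_l^{\alpha}$ together with the summability of the profile norms. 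This is a legitimate and somewhat more elementary alternative (no Fourier analysis), but note that Lemma \ref{lem-lions} as stated in the paper is purely qualitative; your termination argument hinges on its quantitative refinement $\|f\|_{L^q}\lesssim\|f\|_{H^1}^{1-\beta}\bigl(\sup_y\|f\|_{L^2(B(y,1))}\bigr)^{\beta}$ (obtained by summing the Gagliardo--Nirenberg inequality over a cover by unit balls) combined with the Rellich theorem to transfer the local $L^2$ mass to the weak limit, and this should be stated and proved rather than asserted. The other difference is in \eqref{sca-pro-dec}: the paper decouples $P$ one profile at a time, so that at each stage the only cross terms involve the newly subtracted profile against a remainder that converges weakly to zero in its own frame, whereas you expand all $l$ profiles at once and must dispose of $O(l^3)$ cross terms (pairs of distinct cores handled by \eqref{ort-pro-dec}, remainder terms by local compactness as in \eqref{exi-min-2}); both work, but the telescoping argument of the paper keeps the bookkeeping linear in $l$ and is the cleaner of the two.
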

	
	\begin{proof}
		The proof is based on the argument of \cite{HK}. For reader's convenience, we recall some details. Let $(\vb{u,v})=(u_n,v_n)_{n\geq 1}$ be a bounded sequence in $H^1 \times H^1$. Since $H^1 \times H^1$ is a Hilbert space, we denote $\Omega(\vb{u,v})$ the set of functions obtained as weak limits of sequences of $(u_n(\cdot+x_n), v_n(\cdot+x_n))_{n\geq 1}$ with $(x_n)_{n\geq 1}$ a sequence in $\mathbb{R}^4$. Denote
		\[
		\eta(\vb{u,v}) := \sup \{ M(u,v) + K(u,v) \ : \ (u,v) \in \Omega(\vb{u,v})\}.
		\]
		If $\eta(\vb{u,v})=0$, then we can take $(U^j, V^j)=(0,0)$ for all $j\geq 1$. Otherwise we choose $(U^1,V^1) \in \Omega(\vb{u,v})$ such that
		\[
		M(U^1,V^1) + K(U^1, V^1) \geq \frac{1}{2} \eta (\vb{u,v})>0.
		\]
		By definition of $\Omega(\vb{u,v})$, there exists a sequence $(x^1_n)_{n\geq 1} \subset \mathbb{R}^4$ such that up to a subsequence,
		\[
		(u_n(\cdot+x^1_n), v_n(\cdot+x^1_n)) \rightharpoonup (U^1,V^1) \text{ weakly in } H^1 \times H^1.
		\]
		Set $u^1_n(x) := u_n(x) - U^1(x-x^1_n)$ and $v^1_n(x) := v_n(x) - V^1(x-x^1_n)$. It follows that $(u^1_n(\cdot+x^1_n), v^1_n(\cdot+x^1_n)) \rightharpoonup (0,0)$ weakly in $H^1 \times H^1$ and 
		\begin{align*}
		M(u_n,v_n) &= M(U^1,V^1) + M(u^1_n,v^1_n) + o_n(1), \\
		K(u_n,v_n) &= K(U^1, V^1) + K(u^1_n, v^1_n) + o_n(1).
		\end{align*}
		We next show that 
		\[
		P(u_n,v_n) = P(U^1, V^1) + P(u^1_n, v^1_n) + o_n(1).
		\]
		We have
		\begin{align*}
		\langle v_n, (u_n)^2 \rangle &= \langle V^1(\cdot-x^1_n) + v^1_n, (U^1(\cdot-x^1_n) + u^1_n)^2 \rangle \\
		& = \langle V^1(\cdot-x^1_n), [U^1(\cdot-x^1_n)]^2 \rangle + \langle v^1_n, (u^1_n)^2 \rangle + R_{1,n} \\
		& = \langle V^1, [U^1]^2 \rangle + \langle v^1_n, (u^1_n)^2 \rangle + R_{1,n},
		\end{align*}
		where 
		\begin{align*}
		R_{1,n}:= \langle V^1(\cdot-x^1_n), (u^1_n)^2 \rangle &+ \langle v^1_n, [U^1(\cdot-x^1_n)]^2 \rangle \\
		& + 2 \langle v^1_n, U^1(\cdot-x^1_n) u^1_n \rangle + 2 \langle V^1(\cdot-x^1_n), U^1(\cdot-x^1_n) u^1_n \rangle.
		\end{align*}
		Since $(u^1_n(\cdot+x^1_n), v^1_n(\cdot+x^1_n) ) \rightharpoonup (0,0)$ weakly in $H^1\times H^1$, we see that $R_{1,n}= o_n(1)$. We now replace $(\vb{u,v})=(u_n, v_n)_{n\geq 1}$ by $(\vb{u}^1,\vb{v}^1)=(u^1_n, v^1_n)_{n\geq 1}$ and repeat  the same process. If $\eta(\vb{u}^1,\vb{v}^1)=0$, then we choose $(U^j,V^j) = (0,0)$ for all $j\geq 2$. Otherwise there exists $(U^2,V^2)\in \Omega(\vb{u}^1,\vb{v}^1)$ and a sequence $(x^2_n)_{n\geq 1} \subset \mathbb{R}^4$ such that
		\[
		M(U^2,V^2) + K(U^2,V^2) \geq \frac{1}{2} \eta(\vb{u}^1,\vb{v}^1) >0,
		\]
		and $(u^1_n(\cdot+x^2_n), v^1_n(\cdot+x^2_n)) \rightharpoonup (U^2,V^2)$ weakly in $H^1 \times H^1$. Set $u^2_n(x) := u^1_n(x) - U^2(x-x^2_n)$ and $v^2_n(x):= v^1_n(x) - V^2(x-x^2_n)$. It follows that $(u^2_n(\cdot+x^2_n), v^2_n(\cdot+x^2_n)) \rightharpoonup (0,0)$ weakly in $H^1 \times H^1$. We also have
		\begin{align*}
		M(u^1_n,v^1_n) &= M(U^2,V^2) + M(u^2_n, v^2_n) + o_n(1), \\
		K(u^1_n, v^1_n) &= K(U^2, V^2) + K(u^2_n, v^2_n) + o_n(1), \\
		P(u^1_n, v^1_n) &= P(U^2, V^2) + P(u^2_n, v^2_n) + o_n(1).
		\end{align*}
		We next claim that $|x^1_n-x^2_n|\rightarrow \infty$ as $n\rightarrow \infty$. Indeed, if it is not true, then up to a subsequence, $x^1_n -x^2_n \rightarrow x_0$ as $n\rightarrow \infty$ for some $x_0 \in \mathbb{R}^4$. Since 
		\[
		u^1_n(x+ x^2_n) = u^1_n(x + (x^2_n-x^1_n) + x^1_n), \quad v^1_n(x+ x^2_n) = v^1_n( x+ (x^2_n-x^1_n) + x^1_n),
		\]
		and $(u^1_n( \cdot+ x^1_n), v^1_n(\cdot+x^1_n))\rightharpoonup (0,0)$, it yields that $(U^2,V^2) =(0,0)$ which is a contradiction to $\eta(\vb{u}^1,\vb{v}^1)>0$. 
		
		An argument of iteration and orthogonal extraction allow us to construct family $(x^j_n)_{j\geq 1}$ of sequences in $\mathbb{R}^4$ and a sequence $(U^j,V^j)_{j\geq 1}$ of $H^1 \times H^1$-functions satisfying the conclusion of Proposition $\ref{prop-pro-dec}$. To complete the proof, let us show \eqref{err-pro-dec}. Since the series $\sum_{j=1}^\infty M(U^j, V^j) + K(U^j, V^j)$ is convergent, it follows that
		\[
		M(U^j,V^j) + K(U^j,V^j) \rightarrow 0 \text{ as } j \rightarrow \infty. 
		\]
		By construction, we get $\eta(\vb{u}^j,\vb{v}^j) \rightarrow 0$ as $j \rightarrow \infty$. We now introduce $\theta: \mathbb{R}^4 \rightarrow [0,1]$ satisfying $\theta(\xi)=1$ for $|\xi| \leq 1$ and $\theta(\xi)=0$ for $|\xi|\geq 2$. For $R>0$, we define 
		\[
		\hat{\chi}_R(\xi):= \theta(\xi/R),
		\]
		where $\hat{\cdot}$ is the Fourier transform. We next write
		\[
		u^l_n= \chi_R \ast u^l_n + (\delta-\chi_R) \ast u^l_n, \quad v^l_n = \chi_R \ast v^l_n + (\delta-\chi_R) \ast v^l_n,
		\]
		where $\ast$ is the convolution operator and $\delta$ is the Dirac delta function. Let $q \in (2,4)$ be fixed. Using Sobolev embedding and the Plancherel formula, we have
		\begin{align*}
		\|(\delta-\chi_R) \ast (u^l_n, v^l_n)\|^2_{L^q \times L^q} &\lesssim \|(\delta-\chi_R) \ast (u^l_n,v^l_n)\|^2_{\dot{H}^\gamma \times \dot{H}^\gamma} \\
		&\lesssim \int |\xi|^{2\gamma} |(1-\hat{\chi}_R(\xi))|^2 \left(|\hat{u}^l_n(\xi)|^2 + |\hat{v}^l_n(\xi)|^2\right) d\xi \\
		&\lesssim R^{2\gamma-2} \|(u^l_n, v^l_n)\|^2_{\dot{H}^1\times \dot{H}^1},
		\end{align*}
		where $\gamma=2-\frac{2}{q} \in (0,1)$. On the other hand, by H\"older inequality, we have
		\begin{align*}
		\|\chi_R \ast (u^l_n,v^l_n)\|_{L^q \times L^q} &\lesssim \|\chi_R \ast (u^l_n, v^l_n)\|^{\frac{2}{q}}_{L^2 \times L^2} \|\chi_R \ast (u^l_n, v^l_n)\|^{1-\frac{2}{q}}_{L^\infty\times L^\infty} \\
		& \lesssim \|(u^l_n, v^l_n)\|^{\frac{2}{q}}_{L^2 \times L^2} \|\chi_R \ast (u^l_n, v^l_n)\|^{1-\frac{2}{q}}_{L^\infty \times L^\infty}.
		\end{align*}
		Using the fact
		\[
		\limsup_{n\rightarrow \infty} \|\chi_R \ast (u^l_n,v^l_n)\|_{L^\infty \times L^\infty} = \sup_{x_n} \limsup_{n\rightarrow \infty} |\chi_R \ast u^l_n(x_n)| + |\chi_R \ast v^l_n(x_n)|,
		\]
		the definition of $\Omega(\vb{u}^l,\vb{v}^l)$ implies that
		\begin{align*}
		\limsup_{n\rightarrow \infty} &\|\chi_R \ast (u^l_n,v^l_n)\|_{L^\infty \times L^\infty} \\
		&\leq \sup \left\{\left| \int \chi_R(-x) u(x) dx\right| + \left| \int \chi_R(-x) v(x) dx \right| \ : \ (u,v) \in \Omega(\vb{u}^l,\vb{v}^l) \right\}.
		\end{align*}
		By the Plancherel formula, we get
		\begin{align*}
		\left|\chi_R(-x) u(x) dx\right| + \left| \int \chi_R (-x) v(x) dx \right| & = \left| \int \hat{\chi}_R(\xi) \hat{u}(\xi) d\xi \right| + \left| \int \hat{\chi}_R(\xi) \hat{v}(\xi) d\xi \right| \\
		&\lesssim \|\hat{\chi}_R\|_{L^2} \|(u,v)\|_{L^2\times L^2} \\
		&\lesssim R^2 \eta(\vb{u}^l,\vb{v}^l).
		\end{align*}
		We obtain for every $l\geq 1$,
		\begin{align*}
		\|(u^l_n,v^l_n)\|_{L^q \times L^q} &\lesssim R^{\gamma-1} \|(u^l_n, v^l_n)\|_{\dot{H}^1 \times \dot{H}^1} +  \left[ R^2 \eta(\vb{u}^l,\vb{v}^l) \right]^{1-\frac{2}{q}} \|(u^l_n,v^l_n)\|^{\frac{2}{q}}_{L^2 \times L^2}.
		\end{align*}
		We now choose $R= \left[\eta(\vb{u}^l,\vb{v}^l)\right]^{\frac{1}{2}-\epsilon}$ for some $\epsilon>0$ small enough, we get
		\[
		\|(u^l_n, v^l_n)\|_{L^q \times L^q} \lesssim [\eta(\vb{u}^l,\vb{v}^l)]^{(1-\gamma)\left(\frac{1}{2}-\epsilon\right)} \|(u^l_n, v^l_n)\|_{\dot{H}^1 \times \dot{H}^1}  + [\eta(\vb{u}^l,\vb{v}^l)]^{2\epsilon \left(1-\frac{2}{q}\right)} \|(u^l_n, v^l_n)\|_{L^2\times L^2}^{\frac{2}{q}}.
		\]
		Since $\eta(\vb{u}^l,\vb{v}^l) \rightarrow 0$ as $l\rightarrow \infty$, the uniform boundedness in $H^1 \times H^1$ of $(u^l_n, v^l_n)_{l\geq 1}$ implies that
		\[
		\limsup_{n\rightarrow \infty} \|(u^l_n, v^l_n)\|_{L^q \times L^q} \rightarrow 0 \text{ as } l \rightarrow \infty.
		\]
		The proof is complete.
	\end{proof}
	
	Using the profile decomposition, we have the following compactness lemma. 
	\begin{lemma} \label{lem-com-lem}
		Let $d=4$ and $\kappa=\frac{1}{2}$. Let $(u_n,v_n)_{n\geq 1}$ be a bounded sequence in $H^1 \times H^1$ satisfying
		\[
		\limsup_{n\rightarrow \infty} K(u_n,v_n) \leq A, \quad \limsup_{n\rightarrow \infty} P(u_n,v_n) \geq a.
		\]
		Then there exists $(x_n)_{n\geq 1} \subset \mathbb{R}^d$ such that up to a subsequence 
		\[
		(u_n(\cdot+x_n), v_n(\cdot + x_n)) \rightharpoonup (U,V) \text{ weakly in } H^1 \times H^1
		\]
		for some $(U,V) \in H^1 \times H^1$ satisfying 
		\[
		M(U,V) \geq \frac{4a^2}{A^2} M(\phi_0,\psi_0),
		\]
		where $(\phi_0, \psi_0)$ is as in Theorem $\ref{theo-sha-GN-cla}$.
	\end{lemma}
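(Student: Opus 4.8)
The plan is to combine the profile decomposition of Proposition~\ref{prop-pro-dec} with the sharp Gagliardo--Nirenberg inequality of Theorem~\ref{theo-sha-GN-cla}. First I would apply Proposition~\ref{prop-pro-dec} to the bounded sequence $(u_n,v_n)_{n\geq 1}$, obtaining (after extracting a subsequence) profiles $(U^j,V^j)_{j\geq 1}$ and core sequences $(x^j_n)_{n\geq 1}$ such that the decompositions \eqref{mas-pro-dec}, \eqref{kin-pro-dec} and \eqref{sca-pro-dec} hold. By construction each profile arises as a weak limit, $u_n(\cdot+x^j_n)\rightharpoonup U^j$ and $v_n(\cdot+x^j_n)\rightharpoonup V^j$ in $H^1\times H^1$; the orthogonality \eqref{ort-pro-dec} ensures that the contributions of the earlier cores drift off to infinity and do not affect this weak limit.

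The next step is to pass to the limit in the nonlinear functional $P$. Since $P(u,v)=\mathrm{Re}\,\langle v,u^2\rangle$, Hölder's inequality gives $|P(u^l_n,v^l_n)|\lesssim \|u^l_n\|_{L^3}^2\|v^l_n\|_{L^3}$, and because $3\in(2,4)$ the remainder estimate \eqref{err-pro-dec} yields $\limsup_{n\to\infty}P(u^l_n,v^l_n)\to 0$ as $l\to\infty$. Taking $\limsup_{n\to\infty}$ in \eqref{sca-pro-dec} and then letting $l\to\infty$ therefore gives
\[
a \leq \limsup_{n\to\infty} P(u_n,v_n) = \sum_{j\geq 1} P(U^j,V^j).
\]
Meanwhile the nonnegativity of the kinetic terms in \eqref{kin-pro-dec} forces $\sum_{j\geq 1} K(U^j,V^j)\leq \limsup_{n\to\infty}K(u_n,v_n)\leq A$, and likewise $\sum_{j\geq 1}M(U^j,V^j)<\infty$ from \eqref{mas-pro-dec}.

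I would then apply the sharp inequality $P(U^j,V^j)\leq C_{\mathrm{opt}}\,K(U^j,V^j)\sqrt{M(U^j,V^j)}$ to each profile (valid for every profile, since the right-hand side is nonnegative) and use \eqref{sha-con-GN-cla} to estimate
\[
a \leq \sum_{j\geq 1}\frac{K(U^j,V^j)\sqrt{M(U^j,V^j)}}{2\sqrt{M(\phi_0,\psi_0)}} \leq \frac{\sup_j\sqrt{M(U^j,V^j)}}{2\sqrt{M(\phi_0,\psi_0)}}\sum_{j\geq 1}K(U^j,V^j) \leq \frac{A\,\sup_j\sqrt{M(U^j,V^j)}}{2\sqrt{M(\phi_0,\psi_0)}},
\]
where absolute convergence of the series is guaranteed by $|P(U^j,V^j)|\lesssim K(U^j,V^j)$ and $\sum_j K(U^j,V^j)<\infty$. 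Since $\sum_j M(U^j,V^j)<\infty$ we have $M(U^j,V^j)\to 0$, so the supremum is attained at some index $j_0$. Rearranging the displayed inequality gives $M(U^{j_0},V^{j_0})=\sup_j M(U^j,V^j)\geq \tfrac{4a^2}{A^2}M(\phi_0,\psi_0)$, and setting $(U,V):=(U^{j_0},V^{j_0})$ and $x_n:=x^{j_0}_n$ yields the claim.

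The main obstacle is the careful management of the double limit in $n$ and $l$: one must check that the remainder $P(u^l_n,v^l_n)$ is controlled uniformly enough in $n$ to be sent to zero after $l\to\infty$, which is precisely what the $L^q$ vanishing \eqref{err-pro-dec} delivers through the embedding at $q=3$. The remaining points—that each $U^j$ is genuinely the weak limit of the translated sequence and that the supremum of the profile masses is realized—are routine consequences of the orthogonality of the cores and the summability of the masses.
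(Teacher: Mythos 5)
Your proposal is correct and follows essentially the same route as the paper: apply the profile decomposition, send the remainder's $P$-contribution to zero via the $L^q$ vanishing at $q=3$, apply the sharp Gagliardo--Nirenberg inequality profile-by-profile to force $\sup_j M(U^j,V^j)\geq \tfrac{4a^2}{A^2}M(\phi_0,\psi_0)$, and identify the maximizing profile as the weak limit along its own cores using the orthogonality \eqref{ort-pro-dec}. The only point the paper spells out in more detail is the last step (showing the other profiles and the remainder vanish weakly after translation by $x^{j_0}_n$), which you correctly flag as a routine consequence of the construction.
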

	\begin{proof}
		Since $(u_n,v_n)_{n\geq 1}$ is bounded in $H^1\times H^1$, we can apply the profile decomposition given in Proposition \ref{prop-pro-dec}. By \eqref{sca-pro-dec}, we have
		\[
		a \leq \limsup_{n\rightarrow \infty} P(u_n,v_n) \leq \limsup_{n\rightarrow \infty} \left(\sum_{j=1}^l P(U^j,V^j) + P(u^l_n,v^l_n) \right) \leq \sum_{j=1}^\infty P(U^j,V^j).
		\]
		Here we use the fact that $\limsup_{n\rightarrow \infty} P(u^l_n,v^l_n) \rightarrow 0$ as $l\rightarrow \infty$ which follows easily from H\"older's inequality and \eqref{err-pro-dec}. Using the sharp Gagliardo-Nirenberg inequality, we bound
		\[
		a \leq \sum_{j=1}^\infty P(U^j,V^j) \leq \frac{\sup_{j\geq 1} \sqrt{M(U^j,V^j)}}{2\sqrt{M(\phi_0,\psi_0)}} \sum_{j=1}^\infty K(U^j,V^j).
		\]
		We have from \eqref{kin-pro-dec} that 
		\[
		\sum_{j=1}^\infty K(U^j,V^j) \leq \limsup_{n\rightarrow \infty} K(u_n,v_n) \leq A.
		\]
		It follows that
		\[
		\sup_{j\geq 1} M(U^j,V^j) \geq \frac{4a^2}{A^2} M(\phi_0, \psi_0).
		\]
		Since $\sum_{j=1}^\infty M(U^j,V^j)$ is convergent, the above supremum is attained. There thus exists $j_0$ such that $M(U^{j_0}, V^{j_0}) \geq \frac{4a^2}{A^2}M(\phi_0, \psi_0)$. We also have
		\[
		u_n(x+x^{j_0}_n) = U^{j_0}(x) + \sum_{1 \leq j \leq l \atop j \ne j_0} U^j(x+ x^{j_0}_n - x^j_n) + u^l_n(x+x^{j_0}_n),
		\]
		and similarly for $v_n(x+x^{j_0}_n)$. Note that the pairwise orthogonality \eqref{ort-pro-dec} implies that for $j \ne j_0$,
		\[
		(U^j(\cdot + x^{j_0}-x^j_n), V^j(\cdot+x^{j_0}_n - x^j_n)) \rightharpoonup (0,0) \text{ weakly in } H^1 \times H^1.
		\]
		We thus get
		\[
		(u_n(\cdot + x^{j_0}_n), v_n(\cdot + x^{j_0})) \rightharpoonup (U^{j_0}, V^{j_0}) + (\tilde{u}^l, \tilde{v}^l) \text{ weakly in } H^1 \times H^1,
		\]
		where $(\tilde{u}^l, \tilde{v}^l)$ is the weak limit in $H^1 \times H^1$ of $(u^l_n(\cdot+x^{j_0}_n), v^l_n(\cdot + x^{j_0}_n))$. Moreover, by \eqref{err-pro-dec},
		\[
		\|(\tilde{u}^l,\tilde{v}^l)\|_{L^q \times L^q} \leq \limsup_{n\rightarrow \infty} \|(u^l_n(\cdot + x^{j_0}_n), v^l_n(\cdot + x^{j_0}_n))\|_{L^q \times L^q} = \limsup_{n\rightarrow \infty} \|(u^l_n,v^l_n)\|_{L^q \times L^q} \rightarrow 0
		\]
		as $l \rightarrow \infty$. By the uniqueness of the weak limit, we get $(\tilde{u}^l, \tilde{v}^l) = (0,0)$ for every $l\geq j_0$. Thus
		\[
		(u_n(\cdot+ x^{j_0}_n), v_n(\cdot+ x^{j_0}_n)) \rightharpoonup (U^{j_0},V^{j_0}) \text{ weakly in } H^1 \times H^1.
		\]
		The proof is complete.
	\end{proof}
		
	To classify blow-up solutions with minimal mass, we also need the following lemma.
	\begin{lemma} \label{lem-cha-str}
		Let $d=4$ and $\kappa=\frac{1}{2}$. Let $(u,v) \in H^1 \times H^1$ be such that
		\[
		M(u,v) = M_{\emph{gs}}, \quad E(u,v) = 0.
		\]
		Then there exist $(\phi,\psi) \in \mathcal{G}$, $\theta_1, \theta_2 \in \mathbb{R}$ and $\rho>0$ such that 
		\[
		(u(x), v(x)) = (e^{i\theta_1} \rho^2 \phi(\rho x), e^{i\theta_2} \rho^2 \psi(\rho x)).
		\]
	\end{lemma}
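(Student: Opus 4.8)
The plan is to show that the two hypotheses force $(u,v)$ to be a minimizer of the energy at fixed mass $M_{\mathrm{gs}}$ --- equivalently an optimizer of the sharp Gagliardo--Nirenberg inequality --- and then to identify such minimizers with the rescaled ground states in $\mathcal G$.

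The clean first step is the equality case in Gagliardo--Nirenberg. By Theorem \ref{theo-sha-GN-cla} we have $C_{\mathrm{opt}}=\tfrac{1}{2\sqrt{M_{\mathrm{gs}}}}$, so any pair with mass $M_{\mathrm{gs}}$ satisfies $P\le C_{\mathrm{opt}}K\sqrt{M_{\mathrm{gs}}}=\tfrac12K$, whence $E=\tfrac12K-P\ge0$. Since our $(u,v)$ has $E(u,v)=0$ it attains this bound; thus $(u,v)$ is a global minimizer of $E$ under the constraint $M=M_{\mathrm{gs}}$ and equality holds in the sharp inequality.

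Being a constrained minimizer, $(u,v)$ solves the Euler--Lagrange equation $E'(u,v)=\mu\,M'(u,v)$ for some Lagrange multiplier $\mu\in\mathbb R$; a direct computation (exactly as in Item (4) of Theorem \ref{theo-exi-min}) shows this is precisely \eqref{sys-ell-equ} with $\omega_2=2\omega_1=2\omega$ and $\omega=-2\mu$. I would then remove the phases as in that proof: from $\mathrm{Re}\langle v,u^2\rangle\le\langle|v|,|u|^2\rangle$ and the diamagnetic inequalities $\|\nabla|u|\|_{L^2}\le\|\nabla u\|_{L^2}$, $\|\nabla|v|\|_{L^2}\le\|\nabla v\|_{L^2}$, all of which must be equalities here, one obtains $u=e^{i\theta_1}\vartheta$, $v=e^{i\theta_2}\zeta$ with $\vartheta=|u|\ge0$, $\zeta=|v|\ge0$, so that $(\vartheta,\zeta)$ solves $-\Delta\vartheta+\omega\vartheta=2\zeta\vartheta$, $-\tfrac12\Delta\zeta+2\omega\zeta=\vartheta^2$ and is again a minimizer of $J$. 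The identities of Lemma \ref{lem-pro-sol} (whose Pohozaev-type proof is dimension-independent) give, for $d=4$, $K=4S_\omega$, $P=2S_\omega$ and $\omega M=2S_\omega$, hence $\omega=K(u,v)/(2M_{\mathrm{gs}})>0$; set $\rho:=\sqrt\omega$. With the $L^2$-critical rescaling $\vartheta(x)=\rho^2\phi(\rho x)$, $\zeta(x)=\rho^2\psi(\rho x)$ --- which in $d=4$ preserves both the mass $M_{\mathrm{gs}}$ and the minimizing property --- the system for $(\vartheta,\zeta)$ becomes the canonical system \eqref{ell-equ-mas-res} for $(\phi,\psi)$. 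This already yields the claimed form $(u,v)=(e^{i\theta_1}\rho^2\phi(\rho\cdot),e^{i\theta_2}\rho^2\psi(\rho\cdot))$.

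The remaining point, and the main obstacle, is to verify $(\phi,\psi)\in\mathcal G$, i.e. that $(\phi,\psi)$ is a \emph{positive, radially symmetric} minimizer of $J$. Positivity follows from $\phi,\psi\ge0$, elliptic regularity and the maximum principle for the cooperative system \eqref{ell-equ-mas-res}. For the radial symmetry I would use Schwarz symmetrization: passing to $(\phi^*,\psi^*)$ does not increase $K$ (P\'olya--Szeg\H o), leaves $M$ unchanged, and does not decrease $P=\int\psi\,\phi^2$ (the rearrangement inequality, using $(\phi^2)^*=(\phi^*)^2$), so $J(\phi^*,\psi^*)\le J(\phi,\psi)$; as $(\phi,\psi)$ already minimizes $J$, these are all equalities, and the rigidity in the P\'olya--Szeg\H o and rearrangement inequalities forces $\phi,\psi$ to be radial and nonincreasing about a common center. (Alternatively, one may invoke the moving-plane method for positive solutions of \eqref{ell-equ-mas-res}.) Translating that center to the origin --- a translation symmetry suppressed in the statement --- places $(\phi,\psi)\in\mathcal G$, with the phases $\theta_1,\theta_2$ and the scale $\rho=\sqrt{K(u,v)/(2M_{\mathrm{gs}})}$ produced above, completing the argument. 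The delicate equality analysis in the symmetrization step is where the real work lies.
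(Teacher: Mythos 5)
Your argument is correct and follows the same skeleton as the paper's proof: both begin by observing that $M(u,v)=M_{\mathrm{gs}}$ together with $E(u,v)=0$ forces equality in the sharp Gagliardo--Nirenberg inequality, so that $(u,v)$ minimizes $J$; both then reduce to the modulus $(|u|,|v|)$ via the diamagnetic inequality and $P(|u|,|v|)\ge P(u,v)$, recover the constant phases by the rigidity computation of Item (4) of Theorem \ref{theo-exi-min}, and obtain radial symmetry of the (positive) modulus by symmetrization rigidity (the paper cites \cite[Theorem 3.4]{LL} for the strict rearrangement inequality on the potential term, you invoke P\'olya--Szeg\H{o} and rearrangement rigidity or moving planes --- the same kind of input). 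Where you genuinely diverge is the identification of the radial positive minimizer with a rescaled element of $\mathcal G$: the paper simply quotes the structure result from the proof of \cite[Theorem 5.1]{HOT}, whereas you rederive it by writing the Euler--Lagrange system for the constrained minimization, pinning down $\omega=K(u,v)/(2M_{\mathrm{gs}})>0$ via the Pohozaev identities of Lemma \ref{lem-pro-sol}, and rescaling with $\rho=\sqrt\omega$ to land on \eqref{ell-equ-mas-res}; this is more self-contained and makes the value of $\rho$ explicit, at the cost of having to verify separately that the rescaled pair is positive, radial, and still a minimizer of the scale-invariant functional $J$ (all of which you address). One remark in your favor: you correctly flag that radial symmetry is only obtained about some center, so a translation parameter is implicitly suppressed in the statement --- the paper's proof has exactly the same issue and passes over it silently; likewise both proofs tacitly use $|u|>0$ everywhere (available a posteriori from the strong maximum principle) when concluding that $u/|u|$ is a global constant.
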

	\begin{proof}
		Since $M(u,v) = M_{\text{gs}}$ and $E(u,v)=0$, we see that 
		\[
		J(u,v) = \frac{K(u,v) \sqrt{M(u,v)}}{P(u,v)} = 2 \sqrt{M_{\text{gs}}}.
		\]
		This implies that $(u,v)$ is a minimizer of $J$. On the other hand, since $\|\nabla |u|\|_{L^2} \leq \|\nabla u\|_{L^2}$ and $P(|u|,|v|) \geq P(u,v)$, it follows that $J(|u|,|v|) \leq J(u,v)$ or $(|u|,|v|)$ is also a minimizer of $J$. Note that any positive minimizer of $J$ is radial. In fact, suppose that there exists $(u_0,v_0)$ a positive minimizer of $J$ which is not radial. Let $(u^*_0,v^*_0)$ be the symmetric rearrangement of $(u_0,v_0)$. We have from \cite[Theorem 3.4]{LL} that
		\[
		\int v_0 (u_0)^2 dx < \int v^*_0 (u^*_0)^2 dx,
		\]
		and also $\|\nabla u^*_0\|_{L^2} \leq \|\nabla u_0\|_{L^2}, \|u^*_0\|_{L^2} = \|u_0\|_{L^2}$. We get $J(u^*_0, v^*_0) < J(u_0,v_0)$ which is a contradiction. Therefore, $(|u|,|v|)$ is radial. We learn from the proof of \cite[Theorem 5.1]{HOT} that there exists $(\phi,\psi) \in \mathcal{G}$ and $\rho>0$ such that
		\[
		|u(x)| = \rho^a \phi(\rho x), \quad |v(x)| = \rho^a \psi(\rho x)
		\]
		for some $a \in \mathbb{R}$. Since $M(|u|,|v|) = M(u,v) = M_{\text{gs}}= M(\phi,\psi)$, it follows that $a=2$. It remains to show that $\tilde{u}(x)= \frac{u(x)}{|u(x)|}$ and $\tilde{v}(x) = \frac{v(x)}{|v(x)|}$ are constant on $\mathbb{R}^4$. This fact follows by the same lines as in the proof of Item (4) of Theorem $\ref{theo-exi-min}$. The proof is complete.
	\end{proof}
	
	We also need the following Cauchy-Schwarz inequality due to Banica \cite{Banica}
	\begin{lemma} \label{lem-cau-sch-ine}
		Let $d=4$ and $\kappa=\frac{1}{2}$. Let $(u,v) \in H^1\times H^1$ be such that $M(u,v) = M_{\emph{gs}}$. It follows that for any real-valued function $\varphi \in C^1$ satisfying $\nabla \varphi$ is bounded, 
		\[
		\left| \int \nabla \varphi \cdot \emph{Im} (\nabla u \overline{u} + \nabla v \overline{v} ) dx \right| \leq \sqrt{2 E(u,v)} \left(\int |\nabla \varphi|^2 (|u|^2 + 2 |v|^2 ) dx\right)^{1/2}.
		\]
	\end{lemma}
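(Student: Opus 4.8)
The plan is to test the energy against a one-parameter family of gauge transformations adapted to the mass-resonance structure, and to exploit the fact that at the minimal mass the energy is forced to be nonnegative. Concretely, for $s\in\mathbb{R}$ I would set
\[
u_s(x):=e^{is\varphi(x)}u(x),\qquad v_s(x):=e^{2is\varphi(x)}v(x),
\]
where the factor $2$ in the phase of $v_s$ is exactly the one dictated by the quadratic interaction $v\overline{u}$, $u^2$. With this choice the pointwise identity $\overline{v_s}\,u_s^2=\overline{v}\,u^2$ holds, so that $P(u_s,v_s)=P(u,v)$, and since $|u_s|=|u|$ and $|v_s|=|v|$ we also keep $M(u_s,v_s)=M(u,v)=M_{\mathrm{gs}}$. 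This double invariance is the whole point of using the resonant phase, and it is where the hypotheses $M(u,v)=M_{\mathrm{gs}}$ and $\kappa=\frac12$ enter.

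Next I would expand $E(u_s,v_s)$ in $s$. Writing $\nabla u_s=e^{is\varphi}(\nabla u+is\,u\,\nabla\varphi)$ and $\nabla v_s=e^{2is\varphi}(\nabla v+2is\,v\,\nabla\varphi)$ and using $\mathrm{Re}(iz)=-\mathrm{Im}(z)$, a direct computation gives
\[
K(u_s,v_s)=K(u,v)+2s\,B+s^2 D,
\]
where $B:=\int\nabla\varphi\cdot\mathrm{Im}(\nabla u\,\overline{u}+\nabla v\,\overline{v})\,dx$ and $D:=\int|\nabla\varphi|^2\bigl(|u|^2+2|v|^2\bigr)\,dx$. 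Since $P$ is unchanged, this yields the scalar quadratic
\[
E(u_s,v_s)=E(u,v)+s\,B+\tfrac{s^2}{2}\,D.
\]
Now I would invoke the sharp Gagliardo--Nirenberg inequality of Theorem \ref{theo-sha-GN-cla}: because $M(u_s,v_s)=M_{\mathrm{gs}}=M(\phi_0,\psi_0)$ and $C_{\mathrm{opt}}=\tfrac{1}{2\sqrt{M(\phi_0,\psi_0)}}$, one gets $P(u_s,v_s)\le\tfrac12 K(u_s,v_s)$, hence $E(u_s,v_s)\ge0$ for every $s\in\mathbb{R}$ (the case $P\le 0$ being trivial). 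Thus the upward parabola $s\mapsto \tfrac{D}{2}s^2+Bs+E(u,v)$ is nonnegative on all of $\mathbb{R}$, so its discriminant is nonpositive, $B^2\le 2D\,E(u,v)$, which is exactly the claimed inequality after taking square roots.

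The steps are essentially routine once the correct family $(u_s,v_s)$ is chosen, so I do not expect a serious obstacle; the one point requiring care is the verification that $P(u_s,v_s)=P(u,v)$, which is precisely where the resonant phase (twice the phase on the $v$-component) is needed, since any other choice would spoil the invariance and break the argument. I would also record the degenerate case $D=0$ separately: then $E(u,v)+sB\ge0$ for all $s$ forces $B=0$, and the inequality holds trivially.
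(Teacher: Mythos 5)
Your proposal is correct and follows essentially the same route as the paper: the resonant gauge family $(e^{is\varphi}u, e^{2is\varphi}v)$, the invariance of $M$ and $P$, the nonnegativity of the energy at minimal mass via the sharp Gagliardo--Nirenberg inequality, and the discriminant of the resulting quadratic in $s$. Your extra remarks on the degenerate case $D=0$ and on why the factor $2$ in the phase of $v$ is forced are welcome but not points of divergence.
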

	\begin{proof}
		We first note that if $M(u,v)=M_{\text{gs}}$, then $E(u,v) \geq 0$. This fact follows easily from the sharp Gagliardo-Nirenberg inequality. Since $M(e^{is \varphi} u, e^{2is\varphi} v)= M(u,v) = M_{\text{gs}}$, we get $E(e^{is \varphi} u, e^{2is\varphi} v) \geq 0$ for any real number $s$. On the other hand,
		\begin{align*}
		E(e^{is \varphi} u, e^{2is\varphi} v) &= \frac{1}{2} K(e^{is \varphi} u, e^{2is\varphi} v) - P(e^{is \varphi} u, e^{2is\varphi} v) \\
		&= \frac{1}{2} \left( \int |is u\nabla \varphi + \nabla u|^2 dx + \frac{1}{2} \int |2is v \nabla \varphi + \nabla v|^2 dx \right) - P(u,v) \\
		&= \frac{s^2}{2} \int |\nabla \varphi|^2 (|u|^2 +2 |v|^2 ) dx + s \int \nabla \varphi \cdot \text{Im }( \nabla u \overline{u} +  \nabla v \overline{v} ) dx + E(u,v).
		\end{align*}
		Since $E(e^{is \varphi} u, e^{2is\varphi} v) \geq 0$ for any $s \in \mathbb{R}$, the discriminant of the equation in $s$ must be non-positive, and the result follows.
	\end{proof}
	
	We also need the virial identity related to \eqref{mas-res-Syst}.
	\begin{lemma} \label{lem-vir-ide}
		Let $d=4$ and $\kappa=\frac{1}{2}$. Let $(u_0,v_0) \in H^1 \times H^1$ be such that $(|x| u_0, |x| v_0) \in L^2 \times L^2$. Then the corresponding solution to \eqref{mas-res-Syst} satisfies
		\begin{align} \label{vir-ide-1}
		\frac{d^2}{dt^2} \left( \|x u(t)\|^2_{L^2} + 2 \|x v(t)\|^2_{L^2} \right) = 16 E(u_0,v_0).
		\end{align}
		In particular,
		\begin{align} \label{vir-ide-2}
		\|x u(t)\|^2_{L^2} + 2 \|x v(t)\|^2_{L^2} &= \|x u_0\|^2_{L^2} + 2 \|x v_0\|^2_{L^2} + 4 t \emph{Im} \int x \cdot \nabla u_0 \overline{u}_0 + x \cdot \nabla v_0 \overline{v}_0 dx + 8t^2 E(u_0,v_0) \nonumber \\
		&= 8t^2 E\left(e^{i\frac{|x|^2}{4t}} u_0, e^{i\frac{|x|^2}{2t}} v_0 \right).
		\end{align}
	\end{lemma}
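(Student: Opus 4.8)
The plan is to specialize the general virial identity of Lemma \ref{lem-vir-ide-chi} to the weight $\chi(x) = |x|^2$, for which $V_\chi(t) = \|xu(t)\|_{L^2}^2 + 2\|xv(t)\|_{L^2}^2$ is exactly the quantity of interest. First I would record the elementary derivatives of this weight in $d=4$, namely $\nabla\chi = 2x$, $\partial^2_{jk}\chi = 2\delta_{jk}$, $\Delta\chi = 8$ and $\Delta^2\chi = 0$. Substituting these into \eqref{vir-ide-chi-2}, the biharmonic term drops out, the Hessian term collapses to $8\|\nabla u\|_{L^2}^2 + 4\|\nabla v\|_{L^2}^2 = 8K(u,v)$, and the nonlinear term becomes $-16P(u,v)$. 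Hence
\begin{align*}
\frac{d^2}{dt^2}V_\chi(t) = 8K(u(t),v(t)) - 16 P(u(t),v(t)) = 16 E(u(t),v(t)) = 16 E(u_0,v_0),
\end{align*}
where the last two equalities use the definition $E = \tfrac{1}{2}K - P$ and the conservation of energy from Proposition \ref{prop-lwp-wor}. This establishes \eqref{vir-ide-1}.

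Next I would integrate this constant-acceleration identity twice in time. The zeroth-order term is simply $V_\chi(0) = \|xu_0\|_{L^2}^2 + 2\|xv_0\|_{L^2}^2$, while the first-order coefficient $V_\chi'(0)$ is read off from \eqref{vir-ide-chi-1} with $\nabla\chi = 2x$: it equals $4\,\mathrm{Im}\int (x\cdot\nabla u_0\,\overline{u}_0 + x\cdot\nabla v_0\,\overline{v}_0)\,dx$. This yields the first line of \eqref{vir-ide-2}.

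For the second equality in \eqref{vir-ide-2}, I would compute the energy of the pseudo-conformally modulated pair $w := e^{i|x|^2/(4t)}u_0$ and $z := e^{i|x|^2/(2t)}v_0$ directly. Expanding $\nabla w = e^{i|x|^2/(4t)}(\nabla u_0 + i\tfrac{x}{2t}u_0)$ and $\nabla z = e^{i|x|^2/(2t)}(\nabla v_0 + i\tfrac{x}{t}v_0)$, the cross terms produce exactly the factors $\tfrac{1}{t}\mathrm{Im}\int x\cdot\nabla u_0\,\overline{u}_0\,dx$ and $\tfrac{2}{t}\mathrm{Im}\int x\cdot\nabla v_0\,\overline{v}_0\,dx$, while the squared-phase terms produce $\tfrac{1}{4t^2}\|xu_0\|_{L^2}^2$ and $\tfrac{1}{t^2}\|xv_0\|_{L^2}^2$. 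The crucial observation is that the ratio of the two phases is $2:1$, matching the mass resonance built into \eqref{mas-res-Syst}, so that in $P(w,z) = \mathrm{Re}\int \overline{z}\,w^2\,dx$ the phase $e^{-i|x|^2/(2t)}$ coming from $\overline{z}$ cancels the phase $e^{i|x|^2/(2t)}$ coming from $w^2$, giving $P(w,z) = P(u_0,v_0)$. Collecting terms and multiplying by $8t^2$ then reproduces precisely the first line of \eqref{vir-ide-2}.

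The algebra above is all elementary; the only genuine subtlety is the one already flagged in the proof of Lemma \ref{lem-vir-ide-chi}, namely that these identities are derived formally and require a regularization argument to be rigorous at $H^1$ regularity. Since $\chi = |x|^2$ is now unbounded, one cannot directly invoke the manipulations; instead the standard remedy is to run the computation with the truncated weights $\chi_R$ of \eqref{def-chi-R} (or to regularize the data within $\Sigma$), using the finite-variance hypothesis $(|x|u_0,|x|v_0)\in L^2\times L^2$ together with the persistence of finite variance under the flow to justify the passage $R\to\infty$. This limiting procedure, rather than any of the computations, is the step that must be handled with care.
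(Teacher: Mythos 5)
Your proposal is correct and follows exactly the paper's route: specialize Lemma \ref{lem-vir-ide-chi} to $\chi(x)=|x|^2$ (so $\Delta^2\chi=0$, the Hessian term gives $8K$, and $\Delta\chi=8$ gives $-16P$, whence $16E$), integrate twice using \eqref{vir-ide-chi-1} for the linear coefficient, and verify the second line of \eqref{vir-ide-2} by expanding the gradients of the phase-modulated pair, with the $2:1$ phase ratio cancelling in $P$. Your closing remark on the need to justify the unbounded weight $|x|^2$ by truncation is a sensible elaboration of the regularization caveat the paper itself flags in Lemma \ref{lem-vir-ide-chi}, not a departure from its argument.
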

	\begin{proof}
		The identity \eqref{vir-ide-1} follows from Lemma $\ref{lem-vir-ide-chi}$ with $\chi(x)= |x|^2$. The identity \eqref{vir-ide-2} follows from the fact that
		\begin{align*}
		\left| \nabla \left( e^{i\frac{|x|^2}{4t}} u_0 \right) \right|^2 &= \frac{1}{4t^2} |xu_0|^2 + \frac{1}{t} \text{Im} (x \cdot \nabla u_0 \overline{u}_0) + |\nabla u_0|^2, \\
		\left| \nabla \left( e^{i\frac{|x|^2}{2t}} v_0 \right) \right|^2 &= \frac{1}{t^2} |xv_0|^2 + \frac{2}{t} \text{Im} (x \cdot \nabla v_0 \overline{v}_0) + |\nabla v_0|^2.
		\end{align*}
		The proof is complete.
	\end{proof}
	
	We are now able to show the characterization of finite time blow-up solutions of \eqref{mas-res-Syst} with minimal mass.
	
	\noindent \textit{Proof of Theorem $\ref{theo-cla-min-mas}$.}
		Let $(t_n)_{n\geq 1}$ be a time sequence satisfying $t_n \uparrow T$ as $n\rightarrow \infty$. Set 
		\[
		\varrho^2_n:= \frac{K(\phi_0,\psi_0)}{K(u(t_n),v(t_n))}, \quad (u_n(x), v_n(x)) := (\varrho_n^2 u(t_n, \varrho_n x), \varrho^2_n v(t_n, \varrho_n x)),
		\]
		where $(\phi_0, \psi_0)$ is as in Theorem $\ref{theo-sha-GN-cla}$. 
		
		We first claim that there exist $(\tilde{x}_n)_{n\geq 1} \subset \mathbb{R}^4$ such that
		\[
		M(u(t_n), v(t_n)) dx -M(\phi_0, \psi_0) \delta_{x=\tilde{x}_n} \rightharpoonup 0 \text{ as } n\rightarrow \infty,
		\]
		where $\delta_{x=\tilde{x}_n}$ is the Dirac measure at $x=\tilde{x}_n$. 		Indeed, since $\|(u(t_n), v(t_n))\|_{H^1 \times H^1} \rightarrow \infty$ as $t_n \uparrow T$, the conservation of mass implies that $K(u(t_n), v(t_n)) \rightarrow \infty$ as $n\rightarrow \infty$. Thus $\varrho_n \rightarrow 0$ as $n\rightarrow \infty$. Moreover,
		\begin{align*}
		M(u_n, v_n) &= M(u(t_n), v(t_n)) = M(u_0,v_0) = M_{\text{gs}}, \\
		K(u_n, v_n) &= \varrho^2_n K(u(t_n), v(t_n)) = K(\phi_0, \psi_0), \\
		E(u_n, v_n) &= \varrho^2_n E(u(t_n), v(t_n)) = \varrho^2_n E(u_0, v_0) \rightarrow 0 \text{ as } n \rightarrow \infty.
		\end{align*}
		In particular, the last convergence implies that $P(u_n, v_n) \rightarrow \frac{1}{2} K(\phi_0, \psi_0)$ as $n\rightarrow \infty$. The sequence $(u_n, v_n)_{n\geq 1}$ satisfies conditions of Lemma $\ref{lem-com-lem}$ with $A= K(\phi_0,\psi_0)$ and $a= \frac{1}{2} K(\phi_0, \psi_0)$. There thus exist $(x_n)_{n\geq 1} \subset \mathbb{R}^d$ and $(U,V) \in H^1 \times H^1$ such that
		\[
		(u_n(\cdot+ x_n), v_n(\cdot+x_n)) \rightharpoonup (U,V) \text{ weakly in } H^1 \times H^1 \text{ as } n\rightarrow \infty
		\]
		with
		\[
		M(U,V) \geq \frac{4a^2}{A^2}  M(\phi_0,\psi_0) = M(\phi_0,\psi_0)=M_{\text{gs}}.
		\]
		By the semi-continuity of weak convergence, we have
		\[
		M(U,V) \leq \liminf_{n\rightarrow \infty} M(u_n(\cdot+x_n), v_n(\cdot+x_n)) = \liminf_{n\rightarrow \infty} M(u_n,v_n) = M_{\text{gs}}.
		\]
		Thus $M(U,V)=\lim_{n\rightarrow \infty} M(u_n(\cdot+x_n), v_n(\cdot+x_n)) = M_{\text{gs}}$. Therefore, $(u_n(\cdot+x_n), v_n(\cdot+x_n)) \rightarrow (U,V)$ strongly in $L^2 \times L^2$ as $n\rightarrow \infty$. On the other hand,
		we estimate
		\begin{align*}
		|P(u_n,v_n)-P(U,V)| &\leq \left|\text{Re} \int (\overline{v}_n - V) (u_n)^2 dx \right| + \left|\text{Re} \int  \overline{V}(u_n - U) (u_n +U)dx \right| \\
		&\leq \|v_n-V\|_{L^2} \|u_n\|^2_{L^4} + \|V\|_{L^4} \|u_n-U\|_{L^2} \|u_n+U\|_{L^4}.
		\end{align*}
		Using the Sobolev embedding $L^4 \subset H^1$ and the fact $(u_n, v_n)_{n\geq 1}, (U,V)$ are bounded in $H^1 \times H^1$, the strong $L^2\times L^2$-convergence $(u_n(\cdot+x_n), v_n(\cdot+x_n)) \rightarrow (U,V)$ implies that $P(u_n,v_n)\rightarrow P(U,V)$ as $n\rightarrow \infty$. We thus get from the sharp Gagliardo-Nirenberg inequality that
		\[
		\frac{1}{2} K(\phi_0,\psi_0) = \lim_{n\rightarrow \infty} P(u_n,v_n) = P(U,V) \leq \frac{1}{2} \sqrt{\frac{M(U,V)}{M_{\text{gs}}}} K(U,V) = \frac{1}{2} K(U,V).
		\]
		Besides, the semi-continuity of weak convergence implies that
		\[
		K(U,V) \leq \liminf_{n\rightarrow \infty} K(u_n(\cdot+x_n), v_n(\cdot+x_n)) = \liminf_{n\rightarrow \infty} K(u_n,v_n)= K(\phi_0,\psi_0).
		\]
		Thus $K(U,V)= \lim_{n\rightarrow \infty} K(u_n(\cdot+x_n), v_n(\cdot+x_n)) = K(\phi_0,\psi_0)$. Therefore $(u_n(\cdot+x_n), v_n(\cdot+x_n)) \rightarrow (U,V)$ strongly in $\dot{H}^1 \times \dot{H}^1$. Hence $(u_n(\cdot+x_n), v_n(\cdot+x_n)) \rightarrow (U,V)$ strongly in $H^1 \times H^1$ as $n\rightarrow \infty$. Moreover, $M(U,V)= M_{\text{gs}}, K(U,V)=K(\phi_0, \psi_0)$ and $P(U,V)=\frac{1}{2} K(\phi_0,\psi_0)$. There thus exists $(U,V) \in H^1 \times H^1$ satisfying $M(U,V)= M_{\text{gs}}$ and $E(U,V)=0$. Lemma $\ref{lem-cha-str}$ implies that there exist $(\tilde{\phi},\tilde{\psi})\in \mathcal{G}$, $\vartheta_1, \vartheta_2 \in \mathbb{R}$ and $\varrho >0$ such that
		\[
		(U(x), V(x)) = (e^{i\vartheta_1} \varrho^2 \tilde{\phi}(\varrho x),e^{i\vartheta_2} \varrho^2 \tilde{\psi}(\varrho x) ).
		\]
		Thus
		\[
		(u_n(\cdot+x_n), v_n(\cdot+x_n)) = (\varrho^2_n u(t_n, \varrho_n(\cdot+x_n)), \varrho^2_n v(t_n, \varrho_n(\cdot+x_n))) \rightarrow (e^{i\vartheta_1} \varrho^2 \tilde{\phi}(\varrho \cdot), e^{i\vartheta_2} \varrho^2 \tilde{\psi}(\varrho \cdot)) 
		\]
		strongly in $H^1 \times H^1$ as $n\rightarrow \infty$. Set 
		\[
		\tilde{\varrho}_n := \varrho_n/\varrho, \quad \tilde{x}_n := \varrho_n x_n, \quad \tilde{\vartheta}_1 = -\vartheta_1, \quad \tilde{\vartheta}_2 = - \vartheta_2.
		\]
		We obtain 
		\begin{align} \label{mea-con-pro}
		(e^{i\tilde{\vartheta}_1} \tilde{\varrho}^2_n u(t_n, \tilde{\varrho}_n \cdot + \tilde{x}_n), e^{i\tilde{\vartheta}_2} \tilde{\varrho}^2_n v(t_n, \tilde{\varrho}_n \cdot + \tilde{x}_n) ) \rightarrow (\tilde{\phi},\tilde{\psi})
		\end{align}
		strongly in $H^1 \times H^1$ as $n\rightarrow \infty$. This yields the claim. To see this, we change variable $x= \tilde{\varrho}_n y + \tilde{x}_n$ to have for any $\varphi \in C^\infty_0$ that
		\begin{align*}
		\int &(|u(t_n,x)|^2 + 2|v(t_n,x)|^2) \varphi(x) dx \\
		&= \int \tilde{\varrho}^4_n (|u(t_n,\tilde{\varrho}_n y + \tilde{x}_n)|^2 + 2|v(t_n,\tilde{\varrho}_n y + \tilde{x}_n)|^2) \varphi(\tilde{\varrho}_n y + \tilde{x}_n) dx \\
		&= \int \left[ \left( |\tilde{\varrho}_n^2 u(t_n, \tilde{\varrho}_n y + \tilde{x}_n)|^2 + 2|\tilde{\varrho}^2_n v(t_n, \tilde{\varrho}_n y + \tilde{x}_n)|^2  \right) - (|\tilde{\phi}(y)|^2 + 2 |\tilde{\psi}(y)|^2 ) \right] \varphi(\tilde{\varrho}_n y + \tilde{x}_n) dx \\
		&\mathrel{\phantom{=}} + \int (|\tilde{\phi}(y)|^2 + 2 |\tilde{\psi}(y)|^2) ( \varphi(\tilde{\varrho}_n y + \tilde{x}_n) - \varphi(\tilde{x}_n) ) dy + \int (|\tilde{\phi}(y)|^2 + 2 |\tilde{\psi}(y)|^2) \varphi(\tilde{x}_n) dy.
		\end{align*}
		This implies that
		\begin{align*}
		\left| \int \right. & \left. (|u(t_n,x)|^2 + 2|v(t_n,x)|^2) \varphi(x) dx - M(\tilde{\phi},\tilde{\psi}) \varphi(\tilde{x}_n) \right| \\
		&\leq \|\varphi\|_{L^\infty} \int \left(|\tilde{\varrho}_n^2 u(t_n, \tilde{\varrho}_n y + \tilde{x}_n)|^2 - |\tilde{\phi}(y)|^2 \right) + 2 \left(|\tilde{\varrho}_n^2 v(t_n, \tilde{\varrho}_n y + \tilde{x}_n)|^2 - |\tilde{\psi}(y)|^2 \right) dx \\
		&\mathrel{\phantom{\leq}} + \int (|\tilde{\phi}(y)|^2 + 2 |\psi(y)|^2) |\varphi(\tilde{\varrho}_n y + \tilde{x}_n) - \varphi(\tilde{x}_n) | dy.
		\end{align*}
		By \eqref{mea-con-pro}, we have $|\tilde{\varrho}^2_n u(t_n, \tilde{\varrho}_n \cdot + \tilde{x}_n)|^2 \rightarrow |\tilde{\phi}|^2$ in $L^1$ and $|\tilde{\varrho}^2_n v(t_n, \tilde{\varrho}_n \cdot + \tilde{x}_n)|^2 \rightarrow |\tilde{\psi}|^2$ in $L^1$ as $n\rightarrow \infty$, this implies the first integral in the right hand side vanishes as $n\rightarrow \infty$. Moreover, since $\tilde{\varrho}_n \rightarrow 0$, the second integral in the right hand side also vanishes by the dominated convergence. The claim is thus proved.
		
		We now able to show the classification of finite time blow-up solutions with minimal mass. Up to subsequence, we may assume that $\tilde{x}_n \rightarrow x_0 \in \{0, \infty\}$. Now let $\chi$ be a smooth non-negative radial compactly supported function satisfying $\chi(x) = |x|^2$ if $|x| <1$ and $|\nabla \chi(x)|^2 \leq C \chi(x)$ for some constant $C>0$. For $R>0$, we define
		\[
		\chi_R(x) = R^2 \chi(x/R), \quad I_R(t):= \int \chi_R(x) (|u(t,x)|^2 + 2 |v(t,x)|^2) dx.
		\]
		Using the Cauchy-Schwarz inequality given in Lemma $\ref{lem-cau-sch-ine}$, we get
		\begin{align*}
		|I'_R(t)| &= 2 \left| \int \nabla \chi_R \cdot \text{Im} (\nabla u \overline{u} + \nabla v \overline{v}) dx \right| \\
		&\leq 2 \sqrt{2 E(u_0,v_0)} \left( \int |\nabla \chi_R|^2 ( |u(t)|^2 + 2 |v(t)|^2) dx \right)^{1/2} \\
		& \leq 2 C \sqrt{E(u_0,v_0)} \left( \int \chi_R (|u(t)|^2 + 2 |v(t)|^2) dx 	\right)^{1/2} \\
		&= C(u_0,v_0) \sqrt{I_R(t)}.
		\end{align*}
		Integrating with respect to $t$, we get
		\begin{align} \label{cla-pro-1}
		|\sqrt{I_R(t)} - \sqrt{I_R(t_n)}| \leq C(u_0,v_0) |t_n-t|.
		\end{align}
		It follows from the claim above that $I_R(t_n) \rightarrow 0$ as $n\rightarrow \infty$. Indeed, if $|x_n| \rightarrow 0$, then $I_R(t_n) \rightarrow M(\phi,\psi) \chi_R(0)=0$ as $n\rightarrow \infty$. If $|x_n| \rightarrow \infty$, then $I_R(t_n) \rightarrow 0$ since $\chi_R$ is compactly supported. Taking $n\rightarrow \infty$ in \eqref{cla-pro-1}, we obtain
		\[
		I_R(t) \leq C(u_0,v_0) (T-t)^2. 
		\]
		Taking $R \rightarrow \infty$, we get
		\[
		8t^2 E\left(e^{i\frac{|x|^2}{4t}} u_0, e^{i\frac{|x|^2}{2t}} v_0 \right) = \|xu(t)\|^2+ 2 \|x v(t)\|^2_{L^2} \leq C(u_0,v_0)(T-t)^2. 
		\]
		By letting $t \rightarrow T$, we see that $E\left(e^{i\frac{|x|^2}{4T}} u_0, e^{i\frac{|x|^2}{2T}} v_0 \right)=0$. Moreover, 
		\[
		M\left(e^{i\frac{|x|^2}{4T}} u_0, e^{i\frac{|x|^2}{2T}} v_0 \right) = M(u_0,v_0) = M_{\text{gs}}.
		\]
		Lemma $\ref{lem-cha-str}$ then implies that there exist $(\phi,\psi) \in \mathcal{G}$, $\tilde{\theta}_1, \tilde{\theta}_2 \in \mathbb{R}$ and $\tilde{\rho}>0$ such that
		\[
		e^{i\frac{|x|^2}{4T}} u_0(x) = e^{i\tilde{\theta}_1} \tilde{\rho}^2 \phi(\tilde{\rho} x), \quad e^{i\frac{|x|^2}{2T}} v_0 (x) = e^{i\tilde{\theta}_2} \tilde{\rho}^2 \psi(\tilde{\rho} x). 
		\]
		Redefining $\tilde{\rho} = \frac{\rho}{T}$ and $\tilde{\theta}_1 = \theta_1 + \frac{\rho^2}{T}$ and $\tilde{\theta}_2 = \theta_2 + \frac{\rho^2}{T}$, we obtain
		\[
		u_0(x) = e^{i\theta_1} e^{i\frac{\rho^2}{T}} e^{-i\frac{|x|^2}{4T}} \left( \frac{\rho}{T}\right)^2 \phi\left(\frac{\rho x}{T}\right), \quad v_0(x) = e^{i\theta_2} e^{i\frac{\rho^2}{T}} e^{-i\frac{|x|^2}{2T}} \left( \frac{\rho}{T}\right)^2 \psi\left(\frac{\rho x}{T}\right).
		\]
		By the uniqueness of solution to \eqref{mas-res-Syst}, it follows that $(u(t), v(t))$ is given as in \eqref{sol-cla}. The proof is complete.
	\hfill $\Box$

	\section*{Acknowledgement}
	The author would like to express his deep gratitude to his wife - Uyen Cong for her encouragement and support. He also would like to thank the reviewer for his/her helpful comments and suggestions.

\end{document}